\documentclass{amsart}
\usepackage{amsfonts,amscd,amsthm,amsgen,amsmath,amssymb}
\usepackage[all]{xy}
\usepackage[vcentermath]{youngtab}
\newtheorem{theorem}{Theorem}[section]
\newtheorem{lemma}[theorem]{Lemma}
\newtheorem{corollary}[theorem]{Corollary}
\newtheorem{proposition}[theorem]{Proposition}

\theoremstyle{definition}
\newtheorem{definition}[theorem]{Definition}

\theoremstyle{remark}
\newtheorem{remark}[theorem]{Remark}

\numberwithin{equation}{section}



\usepackage[pdftex]{hyperref}

\begin{document}

\title{Topological T-duality for torus bundles with monodromy}
\author{David Baraglia}

\address{Mathematical sciences institute, The Australian National University, Canberra ACT 0200, Australia}


\email{david.baraglia@anu.edu.au}

\thanks{This work is supported by the Australian Research Council Discovery Project DP110103745.}


\date{\today}


\begin{abstract}
We give a simplified definition of topological T-duality that applies to arbitrary torus bundles. The new definition does not involve Chern classes or spectral sequences, only gerbes and morphisms between them. All the familiar topological conditions for T-duals are shown to follow. We determine necessary and sufficient conditions for existence of a T-dual in the case of affine torus bundles. This is general enough to include all principal torus bundles as well as torus bundles with arbitrary monodromy representations. We show that isomorphisms in twisted cohomology, twisted K-theory and of Courant algebroids persist in this general setting. We also give an example where twisted K-theory groups can be computed by iterating T-duality.
\end{abstract}

\maketitle


\section{Introduction}

T-duality is a duality arising from string theory between spaces $X,\hat{X}$ which are torus fibrations equipped with gerbes $\mathcal{G},\hat{\mathcal{G}}$. From the string theory point of view the dual spaces $X,\hat{X}$ are target spaces for certain string theories and the duality is an equivalence between these two theories. The gerbes $\mathcal{G},\hat{\mathcal{G}}$ naturally arise in this setting in connection with holonomy over string worldsheets. Aside from the presence of gerbes the other characteristic feature of $T$-duality is that the spaces involved $X,\hat{X}$ are torus fibrations over a common base $M$. Roughly speaking the non-singular fibers of $X,\hat{X}$ are understood as being dual tori.\\

In the presence of gerbes one can define various twisted topological and geometric structures. On the topological side we consider twisted de Rham cohomology and twisted $K$-theory. On the differential geometry side gerbes appear in generalized geometry in the sense of Hitchin, where they are related to twists of the generalized tangent bundle \cite{hit}. From a purely mathematical point of view T-duality is a relation between pairs $(X,\mathcal{G})$ that entails isomorphisms of these twisted geometric and topological structures. The situation is in many ways similar to that of mirror symmetry. Indeed, if the Strominger-Yau-Zaslow conjecture \cite{syz} is taken seriously, then mirror symmetry is a (highly elaborate) kind of T-duality.\\

A local expression for T-duality given by the so called Buscher rules \cite{bus} gives a more precise expression of the duality between the fibers of $X$ and $\hat{X}$. However the Buscher rules apply only in local charts where both the torus bundle and the gerbe admit trivializations. This leaves the question of how to understand the global nature of T-duality. In \cite{bem} a global topological description of T-duality was formulated which was shown to be consistent with the Buscher rules \cite{bhm1}. This {\em topological T-duality} expresses a purely topological statement of T-duality that does not involve metrics or other geometric structures. The usual starting point for topological T-duality, for instance in \cite{bem},\cite{bhm1},\cite{bunksch},\cite{brs}, is a principal torus bundle $X \to M$ and a degree $3$ integral cohomology class $h$ (or perhaps more correctly a gerbe $\mathcal{G}$) on $X$. In this paper we describe an extension of topological T-duality to arbitrary torus bundles. Note that there are other directions in which topological T-duality can be generalized including non-commutative, even non-associative spaces \cite{mr1},\cite{mr2},\cite{bhm2} and stacks (orbispaces) \cite{bunksch2},\cite{bss}.\\

In \cite{bar} we showed how topological T-duality could be extended from principal circle bundles to general circle bundles. Many of the result we established could easily be extended to non-principal torus bundles of arbitrary rank. However we restricted attention to the circle bundle case, partly because of some techincal details that we could not resolve at the time. In the process of resolving these difficulties some new insights have come to light. We now have a different, greatly simplified definition of T-duality applicable to arbitrary torus bundles and a new approach to the proof of the existence of T-duals. In the existing mathematical literature on T-duality, T-duality is often defined in terms of an exchange of Chern classes and Dixmier-Douady classes. To even state this properly is complicated as it involves looking at the filtrations associated to the Leray-Serre spectral sequences. Our simpler definition does not involve Chern classes, only graded bundle gerbes and morphisms between them. In particular spectral sequences are no longer involved in the definition, though they are used extensively in the proofs that follow. We show that from our new definition the familiar exchange of Chern classes and Dixmier-Douady classes is actually a consequence of T-duality.\\

There are additional benefits to our approach to T-duality. In particular we have reason to suspect that our new definition can be readily adapted to the holomorphic setting, namely holomorphic torus fibrations and homolomorphic gerbes. This brings T-duality in line with existing work on Fourier-Mukai tranforms of twisted sheaves \cite{donpa},\cite{cal},\cite{bas}. Indeed from our new definition of T-duality it is clear that the Fourier-Mukai transform and its generalizations are deeply connected to T-duality. Such connections have been observed in \cite{horoz},\cite{hor},\cite{shar}. The holomorphic setting is likely to be useful in suggesting how to proceed in the case of singular torus fibrations.\\

We sketch the definition of topological T-duality. The various terms used here are defined in the paper. Let $\pi : X \to M$, $\hat{\pi} : \hat{X} \to M$ be rank $n$ torus bundles on $M$, $\mathcal{G},\hat{\mathcal{G}}$ graded gerbes on $X$,$\hat{X}$. Let $C = X \times_M \hat{X}$ be the fiber product and $p,\hat{p}$ the projections from $C$ to $X,\hat{X}$. We say that $(X,\mathcal{G}),(\hat{X},\hat{\mathcal{G}})$ are {\em topological T-duals} if
\begin{itemize}
\item{The gerbes $\mathcal{G},\hat{\mathcal{G}}$ are trivial along the fibers of $X,\hat{X}$.}
\item{There exists a stable isomorphism $\gamma : p^*(\mathcal{G}) \to \hat{p}^*(\hat{\mathcal{G}} \otimes \hat{\pi}^*(L(V)))$, where $V$ is the flat vector bundle $V = (R^1 \pi_* \mathbb{R})^*$ and $L(V)$ the lifting gerbe of $V$.}
\item{The stable isomorphism $\gamma$ satisfies an additional property, we call the {\em Poincar\'e property}.}
\end{itemize}
What we call the Poincar\'e property corresponds to Equation (2.7) in \cite{brs}. As explained in Section \ref{defpro}, this property roughly states that the isomorphism $\gamma$ locally looks like the Poincar\'e line bundle on the fibers of $C$. In particular it establishes a duality between the local systems $R^1 \pi_* \mathbb{Z}$, $R^1 \hat{\pi}_* \mathbb{Z}$, so that $X$, $\hat{X}$ have dual monodromy representations.\\

Aside from the new definition of T-duality our work advances topological T-duality by expanding the scope of applicability. Previous definitions of topogical T-duality have relied on principal torus bundles. As already stated, our definition applies perfectly well to arbitrary torus bundles. We prove the existence of T-duals in the class of {\em affine torus bundles}, that is torus bundles with structure group ${\rm Aff}(T^n) = {\rm GL}(n,\mathbb{Z}) \ltimes T^n$, the group of affine transformations of the torus $T^n$. This class not only includes all principal torus bundles, but allows for arbitrary monodromy representations. It is even known that every torus bundle of rank $\le 3$ admits an affine structure.

Our interest in T-duality for torus bundles with monodromy stems from a desire to develop T-duality for singular torus fibrations. As this is our prime motivation let us describe the argument in some detail. Given a torus fibration $\pi : X \to M$ with singularities, we want to construct a dual fibration $\hat{X} \to M$. Following in the footsteps of topological mirror symmetry \cite{gro}, the plan of attack is to first restrict attention to the non-singular fibers. This gives a locally trivial torus bundle $X' \to (M - \Delta)$, where $\Delta$ is the singular locus. We construct a topological T-dual $\hat{X}' \to (M - \Delta)$ for $X'$. Next we attempt to complete $\hat{X}'$ to a singular torus fibration $\hat{X} \to M$, by adding in dual singular fibers over $\Delta$. This is the approach taken in topological mirror symmetry and we expect that it can be extended to a more general setting of torus fibrations with gerbes. The point we would like to emphasize here is that one does not expect the torus bundle $X' \to (M - \Delta)$ to be principal. It is well known that there can be non-trivial monodromy around non-singular fibers. Indeed for sufficiently well-behaved singularities the monodromy contains a lot of information about the nature of the singularities. Incorporating monodromy into topological T-duality is therefore an essential step towards a establishing a singular version of T-duality.\\

The main conclusion of this paper is that topological T-duality adapts to torus bundles with monodromy without any serious difficulties. In addition there are some features of T-duality which only become apparent when one considers non-principal torus bundles, such as the need to use graded gerbes and to incorporate these grading structures into twisted cohomology and twisted $K$-theory.\\

We now describe the contents of this paper and the main results. Section \ref{prelim} covers the necessary background material on affine torus bundles (Section \ref{afftorbund}) and graded gerbes (Section \ref{ggrb}). Section \ref{ttdoatb} is the main section of the paper. We give our definition of T-duality (Definition \ref{tdual}) and establish the necessary and sufficient conditions for a T-dual to exist (Theorem \ref{exist}). To state the theorem we need to recall, as explained in Section \ref{ggrb} that stable isomorphisms classes of graded gerbes on $X$ correspond to elements of $H^1(X,\mathbb{Z}_2) \times H^2(X , \mathcal{C}_U)$, where $\mathcal{C}_U$ denotes the sheaf of ${\rm U}(1)$-valued continuous functions. For a graded gerbe $\mathcal{G}$, the corresponding class in $H^1(X,\mathbb{Z}_2) \times H^2(X , \mathcal{C}_U)$ is called the {\em (graded) Dixmier-Douady class}. Theorem \ref{exist} is as follows:
\begin{theorem}
Given a pair $(X,\mathcal{G})$ let $(\xi,h) \in H^1(X,\mathbb{Z}_2) \oplus H^2(X,\mathcal{C}_U )$ denote the Dixmier-Douady class of $\mathcal{G}$. Then $(X,\mathcal{G})$ is T-dualizable if and only if the following conditions hold:
\begin{itemize}
\item{$h$ lies in the image of the pullback $\pi^* : H^2(M , \pi_* (\mathcal{C}_U)  ) \to H^2(X , \mathcal{C}_U)$},
\item{$\xi$ lies in the image of the pullback $\pi^* : H^1(M,\mathbb{Z}_2) \to H^1(X,\mathbb{Z}_2)$.}
\end{itemize}
\end{theorem}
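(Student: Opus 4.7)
The plan is to analyze both conditions through the Leray--Serre spectral sequence of $\pi : X \to M$, using the sheaves $\mathcal{C}_U$ and $\mathbb{Z}_2$ for the two components of the graded Dixmier--Douady class. The key observation is that the image of $\pi^* : H^2(M,\pi_*\mathcal{C}_U) \to H^2(X,\mathcal{C}_U)$ coincides with the second step $F^2 H^2(X,\mathcal{C}_U)$ of the Leray filtration, i.e.\ those classes whose associated graded pieces in $E_\infty^{1,1}$ and $E_\infty^{0,2}$ vanish; an analogous statement holds for $H^1$ with $\mathbb{Z}_2$ coefficients. So the theorem becomes: $(\xi,h)$ lies in this second filtration step if and only if a T-dual exists.

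For necessity, suppose $(\hat X,\hat{\mathcal{G}})$ is a T-dual with stable isomorphism $\gamma$ on $C = X \times_M \hat X$. The fiber-triviality bullet of the definition forces the restriction of $(\xi,h)$ to each torus fiber of $\pi$ to vanish, which is exactly the statement that the $E_\infty^{0,q}$ components vanish (putting $(\xi,h)$ in $F^1$). To improve this to $F^2$, one uses the isomorphism $\gamma : p^*\mathcal{G} \to \hat p^*(\hat{\mathcal{G}}\otimes\hat\pi^*L(V))$ together with the Poincar\'e property. Pushing down along $\hat p : C \to \hat X$, the Poincar\'e property is precisely what ensures that the cross term of $(\xi,h)$ in $E_\infty^{1,1}$ is matched, fiberwise, with the analogous term of $(\hat\xi,\hat h)$ under the duality of local systems $R^1\pi_*\mathbb{Z} \leftrightarrow R^1\hat\pi_*\mathbb{Z}$; pairing against the universal pairing on $C$ and integrating over the $\hat T$-fiber kills this term, leaving $(\xi,h) \in F^2$.

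For sufficiency, assume $h = \pi^* h_M$ and $\xi = \pi^* \xi_M$. The sheaf $\pi_*\mathcal{C}_U$ on $M$ fits into a short exact sequence
\begin{equation*}
0 \to \mathcal{C}_{U,M} \to \pi_*\mathcal{C}_U \to \Lambda^* \to 0,
\end{equation*}
where $\Lambda^* = (R^1\pi_*\mathbb{Z})^*$ is the local system of cocharacter lattices of the dual torus. The long exact sequence in cohomology sends $h_M$ to a class $\hat c \in H^2(M,\Lambda^*)$, which I take as the (affine) Chern-type class of the sought dual torus bundle $\hat\pi : \hat X \to M$; since $\mathrm{Aff}(T^n)$-bundles with prescribed monodromy are classified by exactly such data (as reviewed in Section \ref{afftorbund}), this produces $\hat X$. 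The remaining part of $h_M$, together with $\xi_M$ pulled up to $\hat X$, specifies the graded gerbe $\hat{\mathcal{G}}$. Finally one constructs $\gamma$ on $C$ from the tautological Poincar\'e pairing between fibers of $X$ and $\hat X$; by construction it satisfies the Poincar\'e property, and the stable isomorphism class identity $p^*\mathcal{G} \cong \hat p^*(\hat{\mathcal{G}}\otimes\hat\pi^*L(V))$ reduces, via the two exact sequences above, to the tautology $p^*\pi^* = \hat p^*\hat\pi^*$ applied to $h_M$ and $\xi_M$.

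The main obstacle is the sufficiency direction, and within it the verification that $\gamma$ not only exists as an abstract stable isomorphism but genuinely satisfies the Poincar\'e property globally. Locally the construction is forced by the Buscher picture, but globally one must check that the gluing of these local Poincar\'e isomorphisms is controlled by exactly the obstruction classes that were assumed to vanish by hypothesis; I expect this to reduce, after chasing the Leray filtration, to the statement that the maps $\pi^*$ appearing in the two bulleted conditions are the edge maps of the spectral sequence, which is a formal but careful spectral sequence argument rather than a substantive obstruction.
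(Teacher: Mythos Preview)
Your sufficiency argument has a genuine gap. You claim the stable-isomorphism identity reduces to the tautology $p^*\pi^* = \hat p^*\hat\pi^*$, but that only shows the pulled-back Dixmier--Douady classes agree on $C$, hence that \emph{some} stable isomorphism $\gamma$ exists; it gives no control over whether $\gamma$ satisfies the Poincar\'e property. What is actually needed (Proposition~\ref{cpp}) is the refined equation $p^*(k) + d_2(P) = \hat p^*(\tilde k)$ in $H^2(M, q_*\mathcal{C}_U)$, where $P \in H^0(M, R^1 q_*\mathcal{C}_U)$ is the global section assembled from the fiberwise Poincar\'e line bundles and $d_2$ is the Leray differential for $q : C \to M$. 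Achieving this requires a substantive adjustment step: for an arbitrary lift $k'$ of $c$ through the dual exact sequence, the discrepancy $d = p^*(k) + d_2(P) - \hat p^*(k')$ need not vanish; one must compute $\delta(d)$ using the identity $d_2\delta_2(P) = (-\hat c, c)$, conclude via the exact sequence of Proposition~\ref{longex1thm} for $q$ that $d = q^*(e)$ for some $e \in H^2(M,\mathcal{C}_U)$, and then absorb $e$ by setting $\tilde k = k' + \hat\pi^*(e)$. Your expectation that this is ``a formal but careful spectral sequence argument'' underestimates it: identifying $d_2$ on $E_2^{0,1}(q,\mathcal{C}_U)$ with the gerbe-gluing obstruction (Proposition~\ref{obstruction}) and running the absorption argument both carry real content beyond edge-map bookkeeping.

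Your necessity sketch also misfires. You invoke the Poincar\'e property to ``match'' the $E_\infty^{1,1}$ pieces of $h$ and $\hat h$, but those pieces live in $H^1(M,\wedge^2\Lambda^*)$ and $H^1(M,\wedge^2\hat\Lambda^*)$ respectively, which are not identified by the duality $\Lambda \cong \hat\Lambda^*$; the Poincar\'e property concerns the mixed $\Lambda^*\otimes\hat\Lambda^*$ component on the fibers of $C$, not these. The actual argument is simpler and uses only (T1) and (T2): pull back to $C$, project $p^*(h) = \hat p^*(\tilde h)$ to $E_\infty^{1,1}(q,\mathcal{C}_U) \subset H^1(M,\wedge^2(\Lambda^* \oplus \hat\Lambda^*))$, and observe that the left side lands in the $\wedge^2\Lambda^*$ summand while the right side lands in the $\wedge^2\hat\Lambda^*$ summand. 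Equality forces both projections to vanish, which places $h$ in $F^{2,2}(\pi,\mathcal{C}_U)$ as required.
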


In Section \ref{dtd} we restrict attention to the the setting of smooth manifolds and smooth gerbes. Here with the aid of gerbe connections, reviewed in Section \ref{grbconn}, we are able to translate T-duality into a statement involving differential forms on $X$ and $\hat{X}$. Let $(X,\mathcal{G}),(\hat{X},\hat{\mathcal{G}})$ be T-dual pairs and suppose $A,\hat{A}$ are twisted connections on $X,\hat{X}$ with curvature $2$-forms $F,\hat{F}$ (these notions, which generalize connections in the principal case are introduced in Section \ref{sstdt}). Theorem \ref{tdtthm} states that there exists a $3$-form $H_3$ on the base and connections and curvings for $\mathcal{G},\hat{\mathcal{G}}$ so that the curvature $3$-forms $H,\hat{H}$ have the form
\begin{eqnarray*}
H &=& \pi^*(H_3) + ( A \buildrel \wedge \over , \pi^*(\hat{F})), \\
\hat{H} &=& \hat{\pi}^*(H_3) + ( \hat{A} \buildrel \wedge \over , \hat{\pi}^*(F)),
\end{eqnarray*}
where $\pi,\hat{\pi}$ are the projections from $X,\hat{X}$ to the base. The forms $A,F$ and $\hat{A},\hat{F}$ take values in dual local systems and $( \, , \, )$ denotes the dual pairing. These relations are the natural generalization of similar relations in \cite{bem},\cite{bhm1}, which in turn are derived from the Buscher rules. The above relations come close to an alternative characterization of T-duality, but are weaker due to a loss of torsion subgroups in passing to de Rham cohomology. Nevertheless the existence of triples $(A,\hat{A},H_3)$ satisfying these relations turns out to be quite useful, so we call such a triple a {\em T-duality triple}.

In Section \ref{stdtrans} we show that T-duality preserves twisted de Rham cohomology (Theorem \ref{ttdttwcoiso}), twisted $K$-theory (Theorem \ref{ttdttktiso}) and yields an isomorphism of Courant algebroids (Theorem \ref{cai}). In the case of twisted $K$-theory the isomorphism is essentially a $K$-theoretic twisted Fourier-Mukai transform and our definition of T-duality can understood as precisely the conditions needed for such a twisted Fourier-Mukai transform. This is clearly seen in the proof of \ref{ttdttktiso}. In the cases of twisted cohomology and Courant algebroids, the isomorphisms are essentially corollaries of the existence of T-duality triples.

In Section \ref{examps} we give some examples of T-duality with non-trivial monodromy and compute the twisted $K$-theory groups. The example of \ref{tdea} is particularly interesting because it shows how one can use repeated applications of T-duality to calculate twisted $K$-theory groups.


\section{Preliminaries}\label{prelim}


\subsection{Affine torus bundles}\label{afftorbund}
Let $T^n = \mathbb{R}^n / \mathbb{Z}^n$ be the standard $n$-torus. We define ${\rm Aff}(T^n)$ to be the semi-direct product ${\rm Aff}(T^n) = {\rm GL}(n,\mathbb{Z}) \ltimes T^n$ which acts on $T^n$ by affine transformations.

\begin{definition} An {\em affine torus bundle} is a torus bundle $X \to M$ with structure group ${\rm Aff}(T^n)$. Thus there is a principal ${\rm Aff}(T^n)$-bundle $P \to M$ such that $X$ is the associated bundle $X = P \times_{{\rm Aff}(T^n)} T^n$.
\end{definition}

Affine torus bundles are general enough to include all principal bundles as well as torus bundles with non-trivial monodromy. This makes them a good choice for applications to T-duality. In fact classifying affine torus bundles goes a long way towards a full classification of torus bundles for it is known that the inclusion ${\rm Aff}(T^n) \to {\rm Diff}(T^n)$ is a homotopy equivalence for $n \le 3$ (\cite{eaee} for $n=2$, \cite{hat} for $n=3$), so every torus bundle of rank $\le 3$ has an affine structure.\\

Let $P \to M$ be a principal ${\rm Aff}(T^n)$-bundle over $M$. Using the homomorphism ${\rm Aff}(T^n) \to {\rm GL}(n,\mathbb{Z})$ we obtain from $P$ a principal ${\rm GL}(n,\mathbb{Z})$-bundle. When $M$ is connected, principal ${\rm GL}(n,\mathbb{Z})$-bundles up to isomorphism correspond to conjugacy classes of homomorphisms $\rho : \pi_1(M) \to {\rm GL}(n,\mathbb{Z})$, that is to representations of $\pi_1(M)$ on $\mathbb{Z}^n$. We call the representation of $\pi_1(M)$ on $\mathbb{Z}^n$ associated to any principal ${\rm Aff}(T^n)$-bundle the {\em monodromy} of $P$. 

Instead of representations of the fundamental group, we can think of monodromy in term of the corresponding local system. If $\tilde{M} \to M$ is the universal cover of $M$ thought of as a principal $\pi_1(M)$-bundle and $\rho : \pi_1(M) \to {\rm GL}(n,\mathbb{Z})$ a representation then we have an associated bundle of groups $\tilde{M} \times_{\rho} \mathbb{Z}^n$, with fibers isomorphic to $\mathbb{Z}^n$. The sheaf $\Lambda$ of sections of $\tilde{M} \times_{\rho} \mathbb{Z}^n$ is then a local system with coefficient group $\mathbb{Z}^n$. If $\pi : X \to M$ is an affine torus bundle associated to a principal ${\rm Aff}(T^n)$-bundle $P$ then the local system $\Lambda$ has a more direct interpretation in terms of $X$. Consider the higher direct image sheaf $R^1 \pi_* \mathbb{Z}$, that is the sheaf associated to the presheaf which sends an open set $U \subseteq M$ to $H^1( \pi^{-1}(U), \mathbb{Z} )$. If $M$ is locally contractible then $R^1 \pi_* \mathbb{Z}$ is a local system with coefficient group $H^1(T^n , \mathbb{Z})$. One finds that $\Lambda$ is precisely the dual local system, that is $\Lambda = (R^1 \pi_* \mathbb{Z})^* = {\rm Hom}( R^1 \pi_* , \mathbb{Z} , \mathbb{Z})$. We will call $\Lambda$ the {\em monodromy local system} of $X$.\\

So far we have associated to any affine torus bundle $\pi : X \to M$ a local system $\Lambda$. The monodromy local system $\Lambda$ represents the linear part of the ${\rm Aff}(T^n)$-valued transition functions of $X$. In addition we can associate to $X$ a class $c \in H^2(M , \Lambda)$ which represents the translational part of the ${\rm Aff}(T^n)$-valued transitions functions. We call $c$ the {\em twisted Chern class} of $X$. We have seen that associated to $\pi : X \to M$ is a principal ${\rm GL}(n,\mathbb{Z})$-bundle which we denote by $G \to M$. Note that $G$ depends only on $\Lambda$, indeed $G$ can be thought of as the ${\rm GL}(n,\mathbb{Z})$-frame bundle of $\Lambda$. Since ${\rm GL}(n,\mathbb{Z})$ acts on the $n$-torus $T^n$ by group automorphisms, we get a bundle of groups $T^n_\Lambda = G \times_{{\rm GL}(n,\mathbb{Z})} T^n$ with fiber $T^n$. Let $\mathcal{C}(T^n_{\Lambda})$ denote the sheaf of continuous sections of $T^n_\Lambda$. Writing out the cocycle data for ${\rm Aff}(T^n)$-valued transition functions as done in \cite{bar}, one finds that the translational part determines a class $c' \in H^1(M , \mathcal{C}(T^n_\Lambda))$. 

Using the action of ${\rm GL}(n,\mathbb{Z})$ on $\mathbb{R}$ we similarly get a bundle of groups $\mathbb{R}^n_\Lambda$ and a short exact sequence
\begin{equation*}
0 \to \Lambda \to \mathcal{C}( \mathbb{R}^n_\Lambda ) \to \mathcal{C}( T^n_\Lambda ) \to 0.
\end{equation*}
The twisted Chern class of $X$ is defined to be the image of $c'$ under the coboundary $\delta : H^1(M , \mathcal{C}(T^n_\Lambda)) \to H^2(M , \Lambda)$. When $M$ is paracompact we can use a partition of unity to show that the coboundary $\delta$ is an isomorphism.

\begin{theorem}[\cite{bar}]\label{pab}
Let $M$ be locally contractible and paracompact. To every pair $(\Lambda , c)$, where $\Lambda$ is a $\mathbb{Z}^n$-valued local system on $M$ and $c \in H^2(M , \Lambda)$, there is an affine $T^n$-bundle $\pi : X \to M$ with monodromy local system $\Lambda$ and twisted Chern class $c$. Two pairs $(\Lambda_1,c_1)$,$(\Lambda_2,c_2)$ determine the same $T^n$-bundle (up to bundle isomorphisms covering the identity on $M$) if and only if there is an isomorphism $\phi : \Lambda_1 \to \Lambda_2$ of local systems which sends $c_1$ to $c_2$ under the induced homomorphism $\phi : H^2(M ,\Lambda_1) \to H^2(M,\Lambda_2)$.
\end{theorem}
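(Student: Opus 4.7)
The plan is to translate the classification of affine torus bundles into non-abelian Čech cohomology and reduce it to ordinary sheaf cohomology via the exact sequence already introduced in the excerpt.

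First, I would write a principal ${\rm Aff}(T^n)$-bundle $P \to M$ in terms of a Čech cocycle $(g_{\alpha\beta}, t_{\alpha\beta})$ on a good cover $\{U_\alpha\}$, where $g_{\alpha\beta} : U_{\alpha\beta} \to {\rm GL}(n,\mathbb{Z})$ and $t_{\alpha\beta} : U_{\alpha\beta} \to T^n$ satisfy the cocycle condition coming from the semi-direct product, namely $g_{\alpha\beta}g_{\beta\gamma} = g_{\alpha\gamma}$ and $t_{\alpha\beta} + g_{\alpha\beta} t_{\beta\gamma} = t_{\alpha\gamma}$. The linear parts $\{g_{\alpha\beta}\}$ form a Čech $1$-cocycle classifying the principal ${\rm GL}(n,\mathbb{Z})$-bundle $G$, which is precisely the datum of the monodromy local system $\Lambda$.

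Second, for a fixed $\Lambda$ (i.e.\ fixed $g$-cocycle up to coboundary), the translational parts $\{t_{\alpha\beta}\}$ constitute a Čech $1$-cocycle with values in the sheaf $\mathcal{C}(T^n_\Lambda)$ of continuous sections of the associated bundle of groups. Two cocycles $(g_{\alpha\beta}, t_{\alpha\beta})$ and $(g_{\alpha\beta}, t'_{\alpha\beta})$ give isomorphic affine torus bundles over the identity on $M$ precisely when $t, t'$ differ by a coboundary in $\mathcal{C}(T^n_\Lambda)$, so bundles with monodromy $\Lambda$ are parametrized by $H^1(M, \mathcal{C}(T^n_\Lambda))$. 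The exact sequence
\begin{equation*}
0 \to \Lambda \to \mathcal{C}(\mathbb{R}^n_\Lambda) \to \mathcal{C}(T^n_\Lambda) \to 0
\end{equation*}
has $\mathcal{C}(\mathbb{R}^n_\Lambda)$ a fine sheaf on the paracompact space $M$ (it is a module over the sheaf of continuous real functions, which admits partitions of unity), so $H^1$ and $H^2$ of $\mathcal{C}(\mathbb{R}^n_\Lambda)$ vanish, and the connecting homomorphism $\delta : H^1(M, \mathcal{C}(T^n_\Lambda)) \to H^2(M, \Lambda)$ is an isomorphism. Given $(\Lambda, c)$, I would set $c' = \delta^{-1}(c)$, lift $c'$ to a Čech cocycle $t_{\alpha\beta}$, and pair it with any cocycle $g_{\alpha\beta}$ representing $\Lambda$ to produce the desired affine torus bundle; by construction the monodromy is $\Lambda$ and the twisted Chern class is $c$.

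Third, for uniqueness, I would first observe from the discussion above that with a fixed choice of representative of $\Lambda$ the bundle is determined by its class in $H^2(M,\Lambda)$. For the general case where two local systems $\Lambda_1,\Lambda_2$ are isomorphic via $\phi$, such an isomorphism corresponds to a cochain in ${\rm GL}(n,\mathbb{Z})$ relating the cocycles $g^{(1)},g^{(2)}$, and this cochain produces compatible sheaf isomorphisms $\mathcal{C}(\mathbb{R}^n_{\Lambda_1}) \cong \mathcal{C}(\mathbb{R}^n_{\Lambda_2})$ and $\mathcal{C}(T^n_{\Lambda_1}) \cong \mathcal{C}(T^n_{\Lambda_2})$. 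Naturality of $\delta$ then forces the bundles to be isomorphic precisely when $\phi_*(c_1) = c_2$ in $H^2(M,\Lambda_2)$.

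The main obstacle will be the bookkeeping on the uniqueness side: because the coefficient group ${\rm Aff}(T^n)$ is non-abelian, one cannot simply invoke a long exact sequence of abelian sheaf cohomology but must verify at the cocycle level that every bundle isomorphism between affine torus bundles decomposes as a change of ${\rm GL}(n,\mathbb{Z})$-frame (inducing $\phi$) followed by a $\mathcal{C}(T^n_\Lambda)$-coboundary on the translational parts. Once this decomposition is established, the rest of the argument is a direct appeal to the abelian exact sequence above.
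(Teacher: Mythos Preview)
Your argument correctly classifies principal ${\rm Aff}(T^n)$-bundles (equivalently, torus bundles \emph{together with a choice of affine structure}) by pairs $(\Lambda,c)$ up to the stated equivalence. This is what the paper's remark following the theorem calls ``fairly straightforward,'' and your cocycle computation handles it cleanly.

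However, the theorem as stated is about the underlying $T^n$-bundle, with isomorphisms taken among \emph{all} bundle isomorphisms covering the identity on $M$, not only those preserving the affine structure. Your uniqueness argument assumes the bundle isomorphism is given by ${\rm Aff}(T^n)$-valued cochains, but a general fiber bundle isomorphism is a priori given by ${\rm Diff}(T^n)$-valued cochains. The missing and genuinely non-trivial step---which the paper explicitly flags in the remark---is to show that if two principal ${\rm Aff}(T^n)$-bundles $P_1,P_2$ yield isomorphic associated $T^n$-bundles, then $P_1$ and $P_2$ are already isomorphic as principal bundles; equivalently, any two ${\rm Aff}(T^n)$-structures on a given torus bundle are related by a bundle isomorphism. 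This requires input about the homotopy type of ${\rm Diff}(T^n)$ relative to ${\rm Aff}(T^n)$ (or an argument that any torus-bundle isomorphism can be deformed to an affine one), and is not addressed by your cocycle bookkeeping. Without it, you have only proved the easy direction of the uniqueness statement.
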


\begin{remark}
Note that it is fairly straightforward to show that principal ${\rm Aff}(T^n)$-bundles up to principal bundle isomorphism correspond to pairs $(\Lambda,c)$ modulo the equivalence $(\Lambda_1,c_1) \simeq (\Lambda_2,c_2)$ as described. The non-trivial part of Theorem \ref{pab} is the fact that two principal ${\rm Aff}(T^n)$-bundles $P_1,P_2$ determine the same $T^n$-bundle if and only if $P_1,P_2$ are isomorphic as principal bundles. To put it another way, any two ${\rm Aff}(T^n)$-structures on a torus bundle are related by a bundle isomorphism.
\end{remark}

Let $\pi : X \to M$ be a rank $n$ affine torus bundle with monodromy local system $\Lambda$ and twisted Chern class $c \in H^2(M,\Lambda)$. Let $\{(E_r^{p,q},d_r)\}$ denote the Leray-Serre spectral sequence associated to $\pi : X \to M$, using cohomology with integral coefficients. Since the fibers of $X \to M$ are tori we find that the $E_2$ stage is given by $E_2^{p,q} = H^p(M , \wedge^q \Lambda^*)$. We need a description of the differential $d_2$. For $x \in \Lambda$ and $\alpha \in \wedge^q \Lambda^*$ let $i_x \alpha \in \wedge^{q-1}\Lambda^*$ denote the contraction of $x$ by $\alpha$. Combining with the cup product we get a natural contraction map $\smallsmile : H^r(M,\Lambda) \otimes H^s(M,\wedge^q \Lambda) \to H^{s+r}(M,\wedge^{q-1}\Lambda)$. To be more specific, take $M$ to be connected, let $\tilde{M}$ be the universal cover of $M$ and $\rho : \pi_1(M) \to {\rm GL}(n,\mathbb{Z})$ the monodromy representation. Let $\gamma \in \pi_1(M)$ act on the group $S^r(\tilde{M},\mathbb{Z}^n)$ of singular $r$-cochains on $\tilde{M}$ by $\varphi \mapsto \rho(\gamma)( \varphi \circ R_\gamma)$, with $R_\gamma : \tilde{M} \to \tilde{M}$ the right action on $\tilde{M}$. Then $H^*(M,\Lambda)$ is the cohomology of $S^*(\tilde{M},\mathbb{Z}^n)^{\pi_1(M)}$, the $\pi_1(M)$-invariant subcomplex of $S^*(\tilde{M},\mathbb{Z}^n)$. Similarly $H^*(M,\wedge^q \Lambda^*)$ is the cohomology of the $\pi_1(M)$-invariant subcomplex of $S^*(\tilde{M} , \wedge^q (\mathbb{Z})^*)$. For $\varphi \in S^r(\tilde{M},\mathbb{Z}), \psi \in S^s(\tilde{M},\mathbb{Z}), x \in \Lambda, \alpha \in \wedge^q \Lambda^*$ we use the convention
\begin{equation*}
( \varphi \otimes x ) \smallsmile ( \psi \otimes \alpha) = (-1)^s(\varphi \smallsmile \psi) \otimes ( i_x \alpha).
\end{equation*}
This determines the desired map $\smallsmile : H^r(M,\Lambda) \otimes H^s(M,\wedge^q \Lambda) \to H^{s+r}(M,\wedge^{q-1}\Lambda)$. With this convention in place, we have:
\begin{theorem}[\cite{bar}]\label{atblsd}
The differential $d_2 : H^p(M,\wedge^q \Lambda^*) \to H^{p+2}(M,\wedge^{q-1} \Lambda^*)$ is given by contraction with the twisted Chern class: $d_2(a) = c \smallsmile a$.
\end{theorem}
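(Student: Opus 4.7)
The plan is to reduce to the classical case of principal torus bundles via pullback to the universal cover. Let $p : \tilde M \to M$ be the universal cover and set $\tilde X = p^* X$. Since $p^* \Lambda$ is canonically trivialized on $\tilde M$, the structure group ${\rm Aff}(T^n)$ reduces to $T^n$, so $\tilde \pi : \tilde X \to \tilde M$ is a principal $T^n$-bundle with ordinary Chern class $\tilde c = p^*(c) \in H^2(\tilde M, \mathbb Z^n)$. For a principal $T^n$-bundle the formula $\tilde d_2(\tilde a) = \tilde c \smallsmile \tilde a$ is classical: it follows from the multiplicativity of the Leray--Serre spectral sequence and the derivation property of $\tilde d_2$, reducing to the computation on the generators $e^i \in (\mathbb Z^n)^* \subset E_2^{0,1}$, where $\tilde d_2(e^i) = \langle e_i, \tilde c \rangle \in H^2(\tilde M, \mathbb Z)$ is the classical transgression of $H^1(T^n)$.

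To transfer the formula back to $M$, I would use the description from the end of Section \ref{afftorbund} of $H^p(M, \wedge^q \Lambda^*)$ as cohomology of the $\pi_1(M)$-invariant subcomplex $S^*(\tilde M, \wedge^q (\mathbb Z^n)^*)^{\pi_1(M)}$. Since $\tilde X$ is a free $\pi_1(M)$-space with quotient $X$, the invariants $S^*(\tilde X, \mathbb Z)^{\pi_1(M)}$ compute $H^*(X, \mathbb Z)$, so the Leray--Serre spectral sequence of $\pi$ can be constructed from the filtration of these invariant singular cochains by base degree via $\tilde \pi^*$. The $E_2$ page of this filtered complex is $H^p(M, \wedge^q \Lambda^*)$, and its $d_2$ differential is induced by the corresponding construction on $\tilde X$ restricted to invariants. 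Since $\tilde c = p^*(c)$ is $\pi_1(M)$-equivariant at the cochain level and the contraction $\smallsmile$ is natural, restricting $\tilde d_2(\tilde a) = \tilde c \smallsmile \tilde a$ to the invariant subcomplex yields $d_2(a) = c \smallsmile a$.

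The main technical obstacle is the careful verification that the Leray--Serre spectral sequence of $\pi$ is realized by the $\pi_1(M)$-invariant cochains on $\tilde X$ filtered by base degree, with $d_2$ differential the restriction of $\tilde d_2$; this requires some care since invariants do not commute with cohomology in general, so one needs to argue at the cochain-model level rather than directly at the $E_2$ page. An alternative route that sidesteps this is to work with a good cover $\{U_\alpha\}$ of $M$ trivializing $X$, set up the \v{C}ech bicomplex for the pushforward sheaves $R^q \pi_* \mathbb Z = \wedge^q \Lambda^*$, and compute $d_2$ directly from an ${\rm Aff}(T^n)$-cocycle representing the pair $(\Lambda, c)$ by an explicit cocycle calculation. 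This is more concrete but requires more bookkeeping.
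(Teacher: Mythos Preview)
The paper does not prove this theorem; it is cited from \cite{bar}. So there is no proof in the text to compare against, but your proposal can still be assessed on its own merits.

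Your main approach has a genuine gap that cannot be repaired by working ``at the cochain-model level.'' The inclusion of filtered complexes $S^*(\tilde X)^{\pi_1(M)} \hookrightarrow S^*(\tilde X)$ does give a morphism of spectral sequences $E_r(\pi) \to \tilde E_r$, and this morphism does intertwine $d_2$ with $\tilde d_2$. But to deduce the formula for $d_2$ from the formula for $\tilde d_2$ you need this morphism to be \emph{injective} on the $E_2$ page, and it is not. Concretely, take $M = T^2$ as in Section~\ref{tdea}: then $\tilde M = \mathbb{R}^2$ is contractible, so $\tilde E_2^{p,q} = 0$ for all $p>0$ and $\tilde d_2 = 0$ identically. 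Meanwhile $E_2^{p,q}(\pi) = H^p(T^2,\wedge^q\Lambda^*)$ is nonzero and $d_2$ is nontrivial (it encodes the twisted Chern class $j$). The comparison map collapses everything to zero and tells you nothing. More generally, whenever $M$ is aspherical, which includes both example classes in Section~\ref{examps}, your reduction is vacuous. The phrase ``$d_2$ is the restriction of $\tilde d_2$'' presupposes exactly the injectivity that fails.

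Your alternative route via a \v{C}ech bicomplex on a good cover trivializing $X$ is the correct one, and indeed Appendix~\ref{cechapp} of this paper sets up precisely such a \v{C}ech model for Leray spectral sequence differentials. The computation is local: over each $U_\alpha$ the bundle is principal and the transition data are ${\rm Aff}(T^n)$-valued cocycles whose translational part represents $c$; chasing the zig-zag that defines $d_2$ in the bicomplex recovers the contraction with $c$. This is more bookkeeping but it is what actually works, and you should commit to it rather than presenting it as a fallback.
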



\subsection{Graded gerbes}\label{ggrb}

To understand T-duality it is important to realize that the duality is not between spaces alone, but rather spaces equipped with an additional structure we call the {\em $H$-flux}. In the physical interpretation $H$-flux is identified with the Neveu-Schwarz $B$-field of string theory. We shall represent $H$-flux in terms of {\em graded bundle gerbes}. In this section we review the details of the theory of bundle gerbes necessary for T-duality. The gerbes we define below possess a grading structure, which we have found to be essential for T-duality of non-oriented torus bundles. Gerbes with such a grading structure were found to be relevant to type II superstring theories \cite{dfm}. We note also that here we are only interested in capturing the topological aspect of the $B$-field. It should be possible to express a fully geometric notion of T-duality using gerbes with connections and curvings, along the same lines as done in \cite{kava} using differential cohomology. Actually we come close to proving such a statement in Proposition \ref{linkgerbeh}.\\

Most of the definitions and results in this this section can be obtained (after some minor modifications) from standard references for bundle gerbes such as \cite{mm},\cite{stev},\cite{bcmms}. See also \cite{fht} for the closely related notion of graded central extensions.\\

Let $X$ be a (paracompact) topological space. By a {\em quasi-cover} $X$, we mean a space $Y$ and a continuous map $f : Y \to X$ that admits local sections. That is, for each $x \in X$ there is an open neighborhood $x \in U \subseteq X$ and map $s : U \to X$ such that $f(s(u)) = u$ for all $u \in U$. Given a cover of $X$ by open subsets $\{ i_\alpha : U_\alpha \to X \}_{\alpha \in I}$ we find that the disjoint union $Y = \coprod_{\alpha \in I} U_\alpha$ is a quasi-cover, where $f : Y \to X$ is the map $f = \coprod_{\alpha \in I} i_\alpha$. We sometimes use $(Y,f)$ to denote a quasi-cover $f : Y \to X$ or simply say that $Y$ is a quasi-cover of $X$ with the understanding that part of the structure of $Y$ is a map $f : Y \to X$. 

Observe that if $f : Y \to X$ and $r : Z \to Y$ are quasi-covers then so is $f \circ r : Z \to X$. The quasi-covers of $X$ form a category as follows. Let $(Y,f),(Z,g)$ be quasi-covers of $X$. A morphism $r : (Z,g) \to (Y,f)$ is a map $r : Z \to Y$ such that $g = f \circ r$ and such that $r : Z \to Y$ admits local sections (i.e. $r$ is itself a quasi-cover). A {\em refinement} of a quasi-cover $(Y,f)$ is a quasi-cover $(Z,g)$ and morphism $r : (Z,g) \to (Y,f)$. Similarly if we are given quasi-covers $(Y,f),(Y',f')$, a {\em common refinement} is a quasi-cover $(Z,g)$ and morphisms $r : (Z,g) \to (Y,f)$, $r' : (Z,g) \to (Y',f')$. Observe that if $(Y,f),(Y',f)$ are quasi-covers then so is the fiber product $(Y \times_X Y' , f \times_X f')$, which is then a common refinement. In fact common refinements of $(Y,f),(Y',f')$ are in bijection with refinements of $(Y \times_X Y' , f \times_X f')$, or even more simply with maps $Z \to Y \times_X Y'$ admitting local sections.\\

If $(Y,f)$ is a quasi-cover of $X$ we write $Y^{[k]}$ for the $k$-fold fiber product. We have an associated topological groupoid with objects $Y$ and morphisms $Y^{[2]}$. Two pairs $(x,y),(y',z) \in Y^{[2]}$ are taken to be composable if and only if $y = y'$ and their composition is taken to be $(x,y)(y,z) = (x,z)$. A refinement $r : Z \to Y$ induces maps $r^{[k]} : Z^{[k]} \to Y^{[k]}$ as well as a functor between associated groupoids (in fact an equivalence of groupoids).

Let us introduce some notation. For $k = 0,1, \dots , (n-1)$, let $\partial_k : Y^{[n]} \to Y^{[n-1]}$ denote the map that omits the $(k+1)$-th factor. If $L \to Y^{[k]}$ is a line bundle or more generally a fiber bundle over $Y^{[k]}$ we write $L_{x_1,x_2,\dots , x_k}$ for the fiber of $L$ over $(x_1,x_2,\dots , x_k ) \in Y^{[k]}$.

\begin{definition}
A {\em bundle gerbe} \cite{mm} on $X$ consists of a quasi-cover $(Y,f)$ on $X$, a Hermitian line bundle $L \to Y^{[2]}$ on $Y^{[2]}$ and for each pair of composable pairs $(x,y),(y,z) \in Y^{[2]}$ an isomorphism $\theta_{x,y,z} : L_{x,y} \otimes L_{y,z} \to L_{x,z}$ of Hermitian vector spaces such that $\theta_{x,y,z}$ depends continuously on the triple $(x,y,z) \in Y^{[3]}$. In addition it is required that $\theta$ satisfies an associativity condition: for each quadruple $(x,y,z,w)$ there are two possible ways to use $\theta$ to go from $L_{x,y} \otimes L_{y,z} \otimes L_{z,w}$ to $L_{x,w}$ depending on which multiplication is performed first. These two ways are required to give the same isomorphism.

The bundle gerbe defined by the data $f : Y \to X$, $L \to Y^{[2]}$, $\theta$ will be denoted by $(Y,f,L,\theta)$ or simply $(L,\theta)$ if $f : Y \to X$ is understood. If $\mathcal{G} = (Y,f,L,\theta)$ then we say that $\mathcal{G}$ is defined with respect to $Y$.
\end{definition}

\begin{remark}
In the above definition continuity of $\theta_{x,y,z}$ on the triple $(x,y,z)$ means that $(x,y,z) \mapsto \theta_{x,y,z}$ defines a bundle isomorphism $\theta : \partial_2^* L \otimes \partial_0^* L \to \partial_1^* L$. In what follows we will see further occasions where a bundle map is most easily described in terms of the maps on individual fibers. Continuity refers to the requirement that the fiberwise maps collectively define a continuous bundle mapping.
\end{remark}

Adapting the notion of graded central extensions in \cite{fht} to the language of bundle gerbes, we arrive at the following:

\begin{definition}
A {\em graded bundle gerbe} is a bundle gerbe $(Y,f,L,\theta)$ together with an assignment of a (continuously varying) $\mathbb{Z}_2$-grading on $L$ such that $\theta$ has degree $0$. That is, for every point $(x,y) \in Y^{[2]}$ the line bundle $L_{x,y}$ is assigned a degree $\epsilon_{x,y} \in \mathbb{Z}_2$ that varies continuously with $(x,y)$ and such that for every triple $(x,y,z) \in Y^{[3]}$ we have $\epsilon_{x,y} + \epsilon_{y,z} = \epsilon_{x,z}$. Such a graded gerbe is denoted $(Y,f,L,\theta,\epsilon)$ or simply $(L,\theta,\epsilon)$.
\end{definition}

Let $f : Y \to X$ be a quasi-cover of $X$. Suppose $\mathcal{G} = (L,\theta,\epsilon)$, $\mathcal{G}' = (L',\theta',\epsilon')$ are graded gerbes defined with respect to $Y$. By a {\em strict isomorphism} we mean an isomorphism $\phi : L \to L'$ of graded Hermitian line bundles that respects the gerbe products, that is for each $(x,y,z) \in Y^{[3]}$ we have a commutative diagram
\begin{eqnarray*}\xymatrix{
L_{x,y} \otimes L_{y,z} \ar[rr]^-{\phi_{x,y} \otimes \phi_{y,z}} \ar[d]^{\theta_{x,y,z}} & & L'_{x,y} \otimes L'_{y,z} \ar[d]^{\theta'_{x,y,z}} \\
L_{x,z} \ar[rr]^{\phi_{x,z}} & & L'_{x,z}
}
\end{eqnarray*}
The adjective strict is used here to distinguish from the notion of {\em stable isomorphism} defined below.\\

Let $M \to Y$ be a graded Hermitian line bundle on $Y$. We let $\delta(M) \to Y^{[2]}$ be the line bundle $\delta(M) = \partial_1^*(M) \otimes \partial_0^*(M^*)$ defined on $Y^{[2]}$. That is, for each $(x,y) \in Y^{[2]}$ we have $\delta(M)_{x,y} = M_x \otimes M_y^*$. We claim that $\delta(M)$ admits a natural gerbe product $\theta^M$, indeed we define $\theta^M_{x,y,z} : \delta(M)_{x,y} \otimes \delta(M)_{y,z} \to \delta(M)_{x,z}$ to be the map
\begin{equation*}
M_x \otimes M_y^* \otimes M_y \otimes M_z^* \to M_x \otimes M_z^*,
\end{equation*}
obtained by pairing $M_y^*$ and $M_y$. One checks that $\theta^M$ satisfies the associativity condition so that $(\delta(M),\theta^M)$ is a gerbe defined with respect to $Y$. A gerbe of this form is called a {\em trivializable gerbe}.\\

Let $\mathcal{G} = (L,\theta,\epsilon)$, $\mathcal{G}' = (L',\theta',\epsilon')$ be graded gerbes defined with respect to a quasi-cover $Y$ of $X$. We will define a gerbe $\mathcal{G} \otimes \mathcal{G}'$, the {\em product} of $\mathcal{G}$ and $\mathcal{G}'$. The underlying line bundle for $\mathcal{G} \otimes \mathcal{G}'$ is simply the tensor product $L \otimes L'$. The gerbe product $(L \otimes L')_{x,y} \otimes (L \otimes L')_{y,z} \to (L \otimes L')_{x,z}$ is given by the composition
\begin{equation*}\xymatrix{
L_{x,y} \otimes L'_{x,y} \otimes L_{y,z} \otimes L'_{y,z} \ar[r]^-{ e S} & L_{x,y} \otimes L_{y,z} \otimes L'_{x,y} \otimes L'_{y,z} \ar[rr]^-{\theta_{x,y,z} \otimes \theta'_{x,y,z}} & & L_{x,z} \otimes L'_{x,z}
}
\end{equation*}
where $S : L'_{x,y} \otimes L_{y,z} \to L_{y,z} \otimes L'_{x,y}$ is the isomorphism that swaps factors and $e = \pm 1$ is a sign factor which accounts for the fact we are using {\em graded line bundles}. Explicitly $e = (-1)^{\epsilon'_{x,y} \epsilon_{y,z}}$ so that $e = -1$ only when the two factors being swapped are odd. One checks easily that this satisfies the associativity condition and so defines a gerbe.\\

Let $\mathcal{G} = (Y,f,L,\theta,\epsilon)$ be a graded gerbe defined with respect to a quasi-cover $(Y,f)$ of $X$. If $r : (Z,g) \to (Y,f)$ is a refinement of $Y$, then we define a pullback gerbe $r^*\mathcal{G}$ defined with respect to $Z$. The underlying line bundle is the pullback $r^*(L)$ and the gerbe product is similarly a pullback, that is if $(x,y,z) \in Z^{[3]}$ then $(r(x),r(y),r(z)) \in Y^{[3]}$ and we get $\theta_{r(x),r(y),r(z)} : r^*(L)_{x,y} \otimes r^*(L)_{y,z} \to r^*(L)_{x,z}$. Lastly the grading is given by the pullback $r^*(\epsilon)$.

We can also introduce a notion of pullback that changes the base. Let $\phi : X' \to X$ be a map between spaces and $f : Y \to X$ a quasi-cover. Then $\phi^*(Y) = Y \times_{X} X'$ is a quasi-cover of $X'$ and there is a map $r : f^*(Y) \to Y$ covering $\phi$. Given a gerbe $\mathcal{G}$ defined with respect to $Y$ we can now easily define a pull back $r^*(\mathcal{G})$ which is a gerbe defined with respect to $\phi^*(Y)$.

\begin{definition}[\cite{stev},\cite{bcmms}]\label{defstabiso}
Let $\mathcal{G} = (Y,f,L,\theta,\epsilon)$, $\mathcal{G}' = (Y',f',L',\theta',\epsilon')$ be graded gerbes on $X$. A {\em stable isomorphism} $\mathcal{G} \to \mathcal{G}'$ consists of a common refinement $r : (Z,g) \to (Y,f)$, $r' : (Z,g) \to (Y',f')$, a graded line bundle $M \to Z$ and a strict isomorphism of graded gerbes $\phi : r^*(L) \otimes \delta(M) \to r'^*(L')$. Two graded gerbes $\mathcal{G},\mathcal{G}'$ are said to be {\em stably isomorphic} if there exists a stable isomorphism $\mathcal{G} \to \mathcal{G}'$.
\end{definition}

Our definition of stable isomorphism is actually slightly different from \cite{stev},\cite{bcmms}, but in a mild way that becomes irrelevant when we pass to the category $GrGrb(X)$ later in this Section.\\

Observe that given a gerbe $\mathcal{G}$ defined with respect to some quasi-cover $Y$ of $X$ and a refinement $r : Z \to Y$, the pullback $r^*(\mathcal{G})$ is stably isomorphic to $\mathcal{G}$. Thus if we are interested in the classification of gerbes up to stable isomorphism we may choose refinements as desired. In particular, since $f : Y \to X$ admits local sections we may choose an open cover $\{ U_\alpha \}_{\alpha \in I}$ of $X$ and local sections $s_\alpha : U_\alpha \to Y$. Setting $Z = \coprod_{\alpha \in I} U_\alpha$ we have a refinement $r : Z \to Y$ given by $r = \coprod_{\alpha \in I} s_\alpha$. Therefore in the classification of graded gerbes up to stable isomorphism it suffices to consider only those quasi-covers arising from open cover of $X$. If $Y = \coprod_{\alpha} U_\alpha$ then one sees that $Y^{[2]} = \coprod_{\alpha , \beta} U_{\alpha \beta}$, where as usual $U_{\alpha \beta} = U_\alpha \cap U_\beta$. A graded gerbe $\mathcal{G}$ defined with respect to $Y$ is then a collection of graded line bundles $L_{\alpha \beta} \to U_{\alpha \beta}$ together with an associative multiplication $\theta_{\alpha \beta \gamma} : L_{\alpha \beta} \otimes L_{\beta \gamma} \to L_{\alpha \gamma}$. Note that the grading is a $\mathbb{Z}_2$-valued cocycle $\epsilon_{\alpha \beta} : U_{\alpha \beta} \to \mathbb{Z}_2$.

By refining the cover if necessary we may assume there exists sections $\sigma_{\alpha \beta} : U_{\alpha \beta} \to L_{\alpha \beta}$ of unit norm. Define $g_{\alpha \beta \gamma} : U_{\alpha \beta \gamma} \to {\rm U}(1)$ by the relation $\theta_{\alpha \beta \gamma}((\sigma_{\alpha \beta} \otimes \sigma_{\beta \gamma}) = g_{\alpha \beta \gamma} \sigma_{\alpha \gamma}$. Then $\{ g_{\alpha \beta \gamma} \}$ is a \v{C}ech $2$-cocycle with values in the sheaf $\mathcal{C}({\rm U}(1))$. One checks easily that the \v{C}ech cohomology classes associated $\{ \epsilon_{\alpha \beta} \}$, $\{ g_{\alpha \beta \gamma} \}$ are independent of the choice of refinements and local sections, so that we get a well defined pair of classes $[\mathcal{G}] = (\xi , h) \in H^1(X,\mathbb{Z}_2) \times H^2(X,\mathcal{C}({\rm U}(1))) \simeq H^1(X,\mathbb{Z}_2) \times H^3(X , \mathbb{Z})$. The pair $(\xi,h)$ will be called the {\em graded Dixmier-Douady class} of $\mathcal{G}$. The class $\xi \in H^1(X,\mathbb{Z}_2)$ will be called the {\em grading class} of $\mathcal{G}$, while the class $h \in H^3(X,\mathbb{Z})$ the {\em (ungraded) Dixmier-Douady class} of $\mathcal{G}$.

Reversing the construction of the graded Dixmier-Douady class it is clear that any pair $(\xi,h) \in H^1(X,\mathbb{Z}_2) \times H^3(X,\mathbb{Z})$ arises from a graded gerbe. A little more work shows that two gerbes have the same Dixmier-Douady class if and only if they are stably isomorphic. We conclude that stable isomorphism classes of graded gerbes are in bijection with $H^1(X,\mathbb{Z}_2) \times H^3(X,\mathbb{Z})$.\\

The multiplication operation on graded gerbes descends to isomorphism classes and gives $H^1(X,\mathbb{Z}_2) \times H^3(X,\mathbb{Z})$ an abelian group structure. Note that it is not the product structure but rather a (generally) non-trivial extension of $H^1(X,\mathbb{Z}_2)$ by $H^3(X,\mathbb{Z})$. Explicitly the product is as follows \cite{fht}
\begin{equation*}
( a , h ) ( b , k ) = (a + b , h + k + \beta(a \smallsmile b))
\end{equation*}
where $\beta : H^2(X,\mathbb{Z}_2) \to H^3(X,\mathbb{Z})$ is the Bockstein homomorphism.\\

For the purposes of T-duality and twisted $K$-theory, the data defining a stable isomorphism carries too much information. The main point is that we do not want to know about the choice of common refinement $Z$ used. Our next step therefore will be to partition stable isomorphisms into suitable equivalence classes.\\

Let $\mathcal{G} = (Y,f,L,\theta,\epsilon)$, $\mathcal{G}' = (Y',f',L',\theta',\epsilon')$ be stably isomorphic graded gerbes on $X$. Consider two stable isomorphisms. Thus we have two common refinements $r : Z \to Y$, $r' : Z \to Y'$, $s : W \to Y$, $s' : W \to Y'$, two graded line bundles $M \to Z$, $N \to W$ and two strict isomorphisms $\phi : r^*(L) \otimes \delta(M) \to r'^*(L')$ and $\psi : s^*(L) \otimes \delta(N) \to s'^*(L')$. To compare the two stable isomorphisms, choose a common refinement of common refinements, that is let $V$ be a quasi-cover which is a refinement of $Z$ and $W$, $a : V \to Z$, $b : V \to W$ and such that $r \circ a = s \circ b$, $r' \circ a = s' \circ b$. Such refinements are possible to find because the maps $Z \to Y \times_X Y'$ and $W \to Y \times_X Y'$ admit local sections. Indeed we could take $V$ to be $\{ (z,w) \in Z \times W \, | \, r(z) = s(w), \; r'(z) = s'(w)\}$. Let $c : V \to Y$ be the composition $c = r \circ a = s \circ b$ and let $c' : V \to Y'$ similarly be given by $c' = r' \circ a = s' \circ b$. We then have stable isomorphisms $a^*(\phi) : c^*(L) \otimes a^*(\delta(M)) \to c'^*(L')$ and $b^*(\psi) : c^*(L) \otimes b^*(\delta(N)) \to c'^*(L')$. Combining the two isomorphisms we get a strict isomorphism $a^*(\delta(M)) \to b^*(\delta(N))$. Alternatively, letting $D \to V$ be the graded line bundle $D = a^*(M) \otimes b^*(N^*)$ we have a strict isomorphism $\mu : \delta(D) \simeq 1$, where $1$ denotes the trivial gerbe with respect to $V$, that is the gerbe consisting of the trivial line bundle $\mathbb{C} \to V$, and gerbe product simply given by multiplication of complex numbers. From the definition of strict isomorphism this means that for every pair $(x,y) \in V^{[2]}$ there is an isomorphism $\mu_{x,y} : D_y \to D_x$ (continuous in $x,y$) and satisfying the cocycle condition $\mu_{x,y} \circ \mu_{y,z} = \mu_{x,z}$. Thus $\mu$ is a descent isomorphism \cite{bry} and there is a uniquely determined line bundle $E \to X$ such that if $t : V \to X$ is the map from the quasi-cover $V$ to $X$ then $D \simeq t^*(E)$. It is not hard to see that $E$ is independent of the choice common refinement $V$.

From the above discussion we conclude that for any two stable isomorphisms $\alpha,\beta : \mathcal{G} \to \mathcal{G}'$, there is a uniquely determined graded line bundle $E \to X$ over $X$. We introduce an equivalence relation $\sim$ on stable isomorphisms, namely $\alpha \sim \beta$ if and only if the corresponding graded line bundle is trivial (i.e. the grading takes the value $0$ and the line bundle is trivial). For any two graded gerbes $\mathcal{G},\mathcal{G}'$, we let ${\rm Hom}(\mathcal{G},\mathcal{G}')$ denote the set of equivalence classes of stable isomorphisms $\mathcal{G} \to \mathcal{G}'$ (in particular ${\rm Hom}(\mathcal{G},\mathcal{G}')$ is empty if $\mathcal{G},\mathcal{G}'$ are not stably isomorphic). It is straightforward to see that equivalence classes of stable isomorphisms can be composed and that the composition defines a category $GrGrb(X)$ of graded gerbes on $X$. That is the objects of $GrGrb(X)$ are graded gerbes on $X$ and the morphisms are equivalence classes of stable isomorphisms. Let us observe some useful properties of $GrGrb(X)$:
\begin{itemize}
\item{The only morphisms in $GrGrb(X)$ are isomorphisms, so $GrGrb(X)$ is a groupoid.}
\item{$GrGrb(X)$ is contravariant with respect to $X$ in the sense that for any map $\phi : Y \to X$ there is a pullback functor $\phi^* : GrGrb(X) \to GrGrb(Y)$ and if $\psi : Z \to Y$ is another map then $(\phi \circ \psi)^* = \psi^* \circ \phi^*$.}
\item{If $\mathcal{G},\mathcal{H} \in GrGrb(X)$ are isomorphic then ${\rm Hom}(\mathcal{G},\mathcal{H})$ is naturally a torsor for the group $H^0(X,\mathbb{Z}_2) \oplus H^2(X,\mathbb{Z})$ of graded Hermitian line bundles on $X$. In particular ${\rm Hom}(\mathcal{G},\mathcal{G})$ is canonically the group of graded Hermitian line bundles on $X$.}
\item{Think of ${\rm Hom}(\mathcal{G},\mathcal{G}')$ and ${\rm Hom}(\mathcal{G}',\mathcal{G}'')$ as torsors for the group of graded line bundles. Then if $\alpha \in {\rm Hom}(\mathcal{G},\mathcal{G}')$, $\beta \in {\rm Hom}(\mathcal{G}',\mathcal{G}'')$ and $L,M$ graded line bundles, we have $(\beta + M) \circ (\alpha + L) = (\beta \circ \alpha) + (L \otimes M)$.}
\end{itemize}

For any orthogonal vector bundle $V \to X$ on $X$ we can define a graded gerbe $L(V)$ called the {\em lifting gerbe of $V$} \cite{mm},\cite{stev}, which captures the failure of $V$ to admit a ${\rm Spin}^c$-structure. Let $P \to X$ be the orthogonal frame bundle of $V$. There is a natural map $\phi : P^{[2]} \to {\rm O}(n)$, where $n$ is the rank of $V$. The map $\phi$ is defined by letting $\phi( pg , p) = g$ for any $p \in P$ and $g \in {\rm O}(n)$. We can think of ${\rm Pin}^c(n) \to {\rm O}(n)$ as a principal circle bundle over ${\rm O}(n)$. Pulling back the associated Hermitian line bundle by $\phi$ yields a line bundle $L \to P^{[2]}$ which will define the lifting gerbe. The grading on $L$ is given by the composition $P^{[2]} \to {\rm O}(n) \to \mathbb{Z}_2$ where the second map is the determinant. The multiplication on $L$ is induced by the group composition on ${\rm Pin}^c(n)$. One checks that this defines a graded gerbe $L(V)$, the lifting gerbe of $V$. The stable isomorphism class of $L(V)$ is given by $|L(V)| = (w_1(V) , W_3(V)) \in H^1(X,\mathbb{Z}_2) \oplus H^3(X,\mathbb{Z})$, where $w_1(V)$ is the first Stiefel-Whitney class of $V$ and $W_3(V)$ is the third integral Stiefel-Whitney class of $V$. Observe that $w_1(V),W_3(V)$ are indeed the obstructions to a ${\rm Spin}^c(n)$-structure on $V$, so $L(V)$ is trivializable if and only if $V$ admits a ${\rm Spin}^c(n)$-structure. In fact the different trivializations of $L(V)$ can be identified with the different possible ${\rm Spin}^c(n)$-structures on $V$.

If $V$ is a rank $n$ vector bundle without metric we can still define a lifting gerbe $L(V,h)$ by first choosing a metric $h$ on $V$. Although $L(V,h)$ depends on $h$ we have as a consequence of the contractibility of the space of metrics on $V$, that for any two metrics $h_1,h_2$ there exists a canonical isomorphism $L(V,h_1) \to L(V,h_2)$ as elements of $GrGrb(X)$. In this sense the lifting gerbe of $V$ is essentially unique and we continue to denote it by $L(V)$.


\section{Topological T-duality of affine torus bundles}\label{ttdoatb}

In this section we wish to consider T-duality in a purely topological setting, thus the spaces involved need not be smooth manifolds. For technical reasons we assume our spaces to be locally contractible and have the property that every subspace is paracompact. This is true for instance of locally finite CW complexes, since every CW complex is locally compact \cite{hat1} and every locally finite CW complex is metrizable \cite{frpi}.

\subsection{Definition and properties}\label{defpro}

Let $\pi : X \to M$, $\hat{\pi} : \hat{X} \to M$ be two rank $n$ affine torus bundles over a common base $M$. We let $C = X \times_M \hat{X}$ be the fiber product and call $C$ the {\em correspondence space}. We then have naturally defined maps $p,\hat{p},q$ forming a commutative diagram
\begin{equation*}\xymatrix{
& C \ar[dl]_p \ar[dr]^{\hat{p}} \ar[dd]^q & \\
X \ar[dr]^\pi & & \hat{X} \ar[dl]_{\hat{\pi}} \\
& M &
}
\end{equation*}

Given a point $m \in M$ we let $T_m = \pi^{-1}(m)$, $\hat{T}_m = \hat{\pi}^{-1}(m)$ denote the fibers of $X,\hat{X}$ over $m$. In addition it follows that $q^{-1}(m) = T_m \times \hat{T}_m$. 

Let $\Lambda = (R^1\pi_* \mathbb{Z})^* = Hom( R^1 \pi_* \mathbb{Z} , \mathbb{Z})$, $\hat{\Lambda} = (R^1 \hat{\pi}_* \mathbb{Z})^* = Hom(R^1 \hat{\pi}_* \mathbb{Z} , \mathbb{Z})$ be the local systems associated to $\pi,\hat{\pi}$ by taking degree $1$ fiber cohomology and dualizing. So $\Lambda,\hat{\Lambda}$ represent fiber homology. Then $\Lambda \otimes \mathbb{R}$ is a local system with coefficient group $\mathbb{R}^n$. Therefore there exists a rank $n$ vector bundle $V \to M$ with flat connection such that $\Lambda \otimes \mathbb{R}$ is the sheaf of constant sections of $V$. We call $V$ the {\em vertical bundle} of $\pi : X \to M$. In the setting of smooth manifolds we have that $\pi^*(V)$ is the vertical tangent bundle of $X \to M$, that is $\pi^*(V) = \ker (\pi_* : TX \to TM)$ with flat connection induced by the affine structure on the fibers. Similarly let $\hat{V}$ denote the vertical bundle of $\hat{\pi} : \hat{X} \to M$.

\begin{definition}\label{tdual}
Let $\pi : X \to M$, $\hat{\pi} : \hat{X} \to M$ be rank $n$ affine torus bundles on $M$, $\mathcal{G},\hat{\mathcal{G}}$ graded gerbes on $X,\hat{X}$ respectively. We say that the pairs $(X,\mathcal{G})$ and $(\hat{X},\hat{\mathcal{G}})$ are {\em (topologically) T-dual} if the following conditions hold:
\begin{itemize}
\item[(T1)]{For all $m \in M$ the restrictions $\mathcal{G}|_{T_m}$, $\hat{\mathcal{G}}|_{\hat{T}_m}$ are trivial.}
\item[(T2)]{There exists an isomorphism $\gamma : p^*(\mathcal{G}) \to \hat{p}^*(\hat{\mathcal{G}}) \otimes q^*(L(V))$, where $L(V)$ is the lifting gerbe of $V$.}
\item[(T3)]{The isomorphism $\gamma$ can be chosen to satisfy the Poincar\'e property, defined below.}
\end{itemize}
\end{definition}

Note that the Poincar\'e property is used by Bunke, Rumpf and Schick in their definition of topological T-duality \cite[Equation (2.7)]{brs}.

\begin{remark}
We will see shortly (Proposition \ref{dualmono}) that if $(X,\mathcal{G}),(\hat{X},\mathcal{G})$ are T-dual then there is an induced isomorphism $\hat{V} \simeq V^*$ of flat bundles. It follows that there is a canonical isomorphism $L(\hat{V}) \simeq L(V)^*$ and from this one deduces that our definition of T-duality is symmetric in $(X,\mathcal{G}),(\hat{X},\mathcal{G})$.
\end{remark}

\begin{remark}
Nowhere in the definition of topological T-duality do we use the affine structures on $X,\hat{X}$ (this includes the Poincar\'e property below). Thus our definition of topological T-duality actually applies to {\em arbitrary torus bundles}. However, we have restricted attention to the affine case so that we can prove the existence of T-duals.
\end{remark}

To explain the Poincar\'e property we assume axioms (T1),(T2) of Definition \ref{tdual} hold. Choose a point $m \in M$. Then by (T1) there exist trivializations $\tau_m : 1 \to \mathcal{G}|_{T_m}$ and $\hat{\tau}_m : 1 \to \hat{\mathcal{G}}|_{\hat{T}_m}$, where $1$ denotes the trivial gerbe. Certainly there is also a trivialization $\psi_m : 1 \to L(V)|_{\{m\}}$. On $q^{-1}(m) = T_m \times \hat{T}_m$ we have two isomorphisms $p^*( \mathcal{G}|_{T_m} ) \to \hat{p}^*( \hat{\mathcal{G}} |_{\hat{T}_m})$. Indeed the first is given by the composition
\begin{equation*}\xymatrix{
p^*(\mathcal{G}|_{T_m}) \ar[rr]^-{\gamma|_{T_m \times \hat{T}_m}} & & \hat{p}^*(\hat{\mathcal{G}}|_{\hat{T}_m} ) \otimes q^*(L(V)|_{\{m\}}) \ar[rr]^-{id \otimes q^*(\psi_m^{-1})} & & \hat{p}^*(\hat{\mathcal{G}}|_{\hat{T}_m} )
}
\end{equation*}
and the second by the composition
\begin{equation*}\xymatrix{
p^*(\mathcal{G}|_{T_m}) \ar[rr]^-{p^*(\tau_m^{-1})} & & 1 \ar[rr]^-{\hat{p}^*(\hat{\tau}_m)} & & \hat{p}^*(\hat{\mathcal{G}}|_{\hat{T}_m} ).
}
\end{equation*}
The difference between the first and second isomorphism determines a class $d \in H^0(T_m \times \hat{T}_m , \mathbb{Z}_2) \times H^2(T_m \times \hat{T}_m , \mathbb{Z})$ which is the isomorphism class of a graded line bundle on $T_m \times \hat{T}_m$. However the class $d$ is not unique because we chose trivializations $\tau_m,\hat{\tau}_m,\psi_m$. Changing these trivializations we note that the changes in $d$ correspond to pulling back graded line bundles from $T_m$,$\hat{T}_m$ or $\{m\}$ to $T_m \times \hat{T}_m$. Factoring out this ambiguity and using the K\"unneth theorem we have for each $m \in M$ a well defined class $\delta_m \in H^1(T_m , \mathbb{Z}) \otimes H^1(\hat{T}_m , \mathbb{Z})$. Alternatively we can rewrite this as $\delta_m \in Hom( H_1(T_m , \mathbb{Z}) , H^1(\hat{T}_m , \mathbb{Z}))$. We say that $\gamma$ satisfies the {\em Poincar\'e property at $m \in M$} if $\delta_m \in Hom( H_1(T_m , \mathbb{Z}) , H^1(\hat{T}_m , \mathbb{Z}))$ is an isomorphism. If $\gamma$ satisfies satisfies the Poincar\'e property at all points $m \in M$ we simply say that $\gamma$ satisfies the Poincar\'e property.

\begin{remark}
The groups $H^1(T_m , \mathbb{Z})$, $H^1(\hat{T}_m , \mathbb{Z})$ can be naturally identified with the stalks at $m$ of the local systems $\Lambda^* = R^1\pi_*\mathbb{Z}$, $\hat{\Lambda}^* = R^1\hat{\pi}_*\mathbb{Z}$. Likewise the corresponding homology groups $H_1(T_m,\mathbb{Z}),H_1(\hat{T}_m,\mathbb{Z})$ are the stalks of the dual local systems $\Lambda,\hat{\Lambda}$. It is straightforward to see that the assignment $m \mapsto \delta_m$ is actually a global section of $Hom(\Lambda , \hat{\Lambda}^*)$. In particular if $M$ is connected then if $\gamma$ satisfies the Poincar\'e property at a single point $m \in M$, then $\gamma$ satisfies the Poincar\'e property.
\end{remark}

As a consequence of the above remark we immediately have:
\begin{proposition}\label{dualmono}
Let $(X,\mathcal{G}),(\hat{X},\hat{\mathcal{G}})$ be T-dual. There is an isomorphism of local systems $\Lambda^* \simeq \hat{\Lambda}$. Thus the monodromy representations associated to $X,\hat{X}$ are dual and the vertical bundles $V,\hat{V}$ are dual as flat vector bundles.
\end{proposition}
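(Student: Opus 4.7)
The proposition essentially unpacks the remark immediately preceding it, so my plan is to promote the pointwise data $\delta_m$ given by the Poincar\'e property into a morphism of local systems, show it is a stalkwise isomorphism, and then read off the remaining statements. First, I identify the stalks: $H_1(T_m,\mathbb{Z})$ is canonically the stalk $\Lambda_m$ of the monodromy local system, and $H^1(\hat{T}_m,\mathbb{Z})$ is canonically the stalk $\hat{\Lambda}^*_m$. Under these identifications $\delta_m$ is an element of the stalk $\mathrm{Hom}(\Lambda,\hat{\Lambda}^*)_m$.

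The key step is to verify that $m \mapsto \delta_m$ is a (locally constant) global section of the local system $\mathrm{Hom}(\Lambda,\hat{\Lambda}^*)$. Given $m_0 \in M$ I would choose a contractible open neighborhood $U$ of $m_0$ over which $\pi,\hat{\pi}$ admit trivializations. By (T1) together with the homotopy invariance of stable isomorphism classes of gerbes, $\mathcal{G}|_{\pi^{-1}(U)}$ and $\hat{\mathcal{G}}|_{\hat{\pi}^{-1}(U)}$ are trivializable, since $\pi^{-1}(U)$ deformation retracts onto $T_{m_0}$ (and similarly on the dual side). One may therefore pick the trivializations $\tau_m,\hat{\tau}_m$ appearing in the construction of $\delta_m$ to depend continuously on $m \in U$; the trivialization $\psi_m$ of $L(V)|_{\{m\}}$ likewise extends continuously over $U$. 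The difference class constructed from these continuously varying data then depends continuously on $m$, and since the target group $H^1(T_m,\mathbb{Z}) \otimes H^1(\hat{T}_m,\mathbb{Z})$ is discrete, it is actually locally constant. This is the main technical point; the remaining claims will follow quickly from it.

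Granting this, the Poincar\'e property asserts precisely that the resulting global section of $\mathrm{Hom}(\Lambda,\hat{\Lambda}^*)$ is a stalkwise isomorphism, and hence is an isomorphism of local systems $\Lambda \to \hat{\Lambda}^*$. Dualizing gives the desired $\Lambda^* \simeq \hat{\Lambda}$. The statement about monodromy representations is then immediate from the equivalence, explained in Section \ref{afftorbund}, between $\mathbb{Z}^n$-valued local systems on a connected $M$ and conjugacy classes of representations $\pi_1(M) \to \mathrm{GL}(n,\mathbb{Z})$: an isomorphism of local systems corresponds to a conjugacy between the associated representations, so the monodromy representation of $\hat{X}$ is conjugate to the dual of the monodromy representation of $X$. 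Finally, since $V$ is by definition the flat bundle whose sheaf of flat sections is $\Lambda \otimes \mathbb{R}$ (and likewise for $\hat V$), tensoring the isomorphism $\Lambda^* \simeq \hat{\Lambda}$ with $\mathbb{R}$ yields an isomorphism of flat local systems and hence an isomorphism $V^* \simeq \hat{V}$ of flat vector bundles, completing the proof.
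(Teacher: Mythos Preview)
Your proposal is correct and follows essentially the same route as the paper: the paper states the proposition as an immediate consequence of the preceding remark, which asserts (declaring it ``straightforward to see'') that $m \mapsto \delta_m$ is a global section of $\mathrm{Hom}(\Lambda,\hat{\Lambda}^*)$, whence the Poincar\'e property gives a stalkwise isomorphism. You have simply supplied the details behind that ``straightforward'' step---trivializing over contractible neighborhoods and using discreteness of the target to pass from continuity to local constancy---and then spelled out the dualizing and the passage to monodromy representations and vertical bundles, all of which the paper leaves implicit.
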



\subsection{Proof of existence}\label{poe}

Given a pair $(X,\mathcal{G})$ consisting of a rank $n$ affine torus bundle $\pi : X \to M$ and a graded gerbe $\mathcal{G}$ on $X$ it is natural to ask whether $(X,\mathcal{G})$ admits a T-dual. We say that $(X,\mathcal{G})$ is {\em T-dualizable} if $(X,\mathcal{G})$ admits a T-dual $(\hat{X},\hat{\mathcal{G}})$, where $\hat{X}$ is an affine torus bundle on $M$ and $\hat{\mathcal{G}}$ a graded gerbe on $\hat{X}$. It is clear that not every pair $(X,\mathcal{G})$ can have a T-dual in the sense of Definition \ref{tdual}. Indeed an obvious requirement is that $\mathcal{G}$ is trivial along the fibers of $X \to M$. However this is generally not a sufficient condition. In this section we give the precise conditions under which a T-dual exists.\\

We will need to make use of multiple instances of the Leray spectral sequence. Let us fix some notation and at the same time recall some general features of the spectral sequence. Given spaces $Z,W$, a map $f : Z \to W$ and a sheaf $\mathcal{F}$ on $Z$ we let $(E_r^{p,q}(f,\mathcal{F}) , d_r)$ denote the associated Leray spectral sequence. Recall that to the data $Z,W,f,\mathcal{F}$ there is a canonically defined filtration
\begin{equation*}
0 = F^{n+1,n}(f,\mathcal{F}) \subseteq F^{n,n}(f,\mathcal{F}) \subseteq \cdots \subseteq F^{0,n}(f,\mathcal{F}) = H^n(Z,\mathcal{F})
\end{equation*}
such that $E_\infty^{p,q}(f,\mathcal{F}) \simeq F^{p,p+q}(f,\mathcal{F})/F^{p+1,p+q}(f,\mathcal{F})$. The spectral sequence is canonically defined starting from the $E_2$ term, with $E_2^{p,q}(f,\mathcal{F}) = H^p(W , R^q f_* \mathcal{F})$. In addition the natural map $E_2^{p,0}(f,\mathcal{F}) \to E_\infty^{p,0}(f,\mathcal{F}) = F^{p,p}(f,\mathcal{F}) \to H^p(Z,\mathcal{F})$ is given by the natural pullback map $H^p(W,f_*\mathcal{F}) \to H^p(Z,\mathcal{F})$, while the natural map $H^q(Z,\mathcal{F}) \to E_\infty^{0,q}(f,\mathcal{F}) \to E_2^{0,q}(f,\mathcal{F}) = H^0(W,R^q f_* \mathcal{F})$ is the natural map sending a class in $H^q(Z,\mathcal{F})$ to the corresponding global section of $R^q f_* \mathcal{F}$.\\

Let $[\mathcal{G}] = (\xi , h ) \in H^1(X,\mathbb{Z}_2) \oplus H^3(X,\mathbb{Z})$ be the graded Dixmier-Douady class of $\mathcal{G}$. In particular $h \in H^3(X,\mathbb{Z})$ is the Dixmier-Douady class of the underlying ungraded gerbe. Actually we will find it is useful to instead think of $h$ as an element $h \in H^2(X , \mathcal{C}_U )$, where we have introduced the notation $\mathcal{C}_U$ for the sheaf $\mathcal{C}({\rm U}(1))$ of continuous sections valued in ${\rm U}(1)$.
\begin{theorem}\label{exist}
Given a pair $(X,\mathcal{G})$ let $(\xi,h) \in H^1(X,\mathbb{Z}_2) \oplus H^2(X,\mathcal{C}_U )$ denote the Dixmier-Douady class of $\mathcal{G}$. Then $(X,\mathcal{G})$ is T-dualizable if and only if the following conditions hold:
\begin{itemize}
\item{$h$ lies in the image of the pullback $\pi^* : H^2(M , \pi_* (\mathcal{C}_U)  ) \to H^2(X , \mathcal{C}_U)$},
\item{$\xi$ lies in the image of the pullback $\pi^* : H^1(M,\mathbb{Z}_2) \to H^1(X,\mathbb{Z}_2)$.}
\end{itemize}
\end{theorem}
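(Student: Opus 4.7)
The plan is to prove both directions by tracking $\xi$ and $h$ in the Leray filtrations of $\pi$, $\hat\pi$, and $q\colon C\to M$. Recall that the edge map $f^*\colon H^n(W,f_*\mathcal{F})\to H^n(Z,\mathcal{F})$ has image equal to the bottom filtration piece $F^n H^n$, so the two displayed conditions are equivalent to $\xi\in F^1 H^1(X,\mathbb{Z}_2)$ and $h\in F^2 H^2(X,\mathcal{C}_U)$.

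\emph{Necessity.} Assume $(\hat X,\hat{\mathcal{G}})$ is a T-dual, so that $\gamma\colon p^*\mathcal{G}\simeq\hat p^*\hat{\mathcal{G}}\otimes q^*L(V)$. Axiom (T1) gives $\xi|_{T_m}=0$ and $h|_{T_m}=0$ for every $m$; at the $E_2^{0,q}$ level of the Leray sequences this places $\xi$ and $h$ inside $F^1$. Since $E_\infty^{1,0}(\pi,\mathbb{Z}_2)=H^1(M,\mathbb{Z}_2)$ we get $\xi\in\pi^*H^1(M,\mathbb{Z}_2)$ immediately. To improve $h\in F^1$ to $h\in F^2$ I would use (T2). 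The fiber of $q$ is $T_m\times\hat T_m$, and K\"unneth gives
\begin{equation*}
R^1 q_*\mathcal{C}_U \;\simeq\; \wedge^2\Lambda^*\;\oplus\;(\Lambda^*\otimes\hat\Lambda^*)\;\oplus\;\wedge^2\hat\Lambda^*.
\end{equation*}
Functoriality of Leray sends the $E_\infty^{1,1}$-image of $h$ into the first summand of $E_\infty^{1,1}(q,\mathcal{C}_U)$; that of $\hat h$ into the third; while $q^*W_3(V)$, pulled back from $M$, lies in $F^2 H^2(C,\mathcal{C}_U)$ and contributes nothing to $E_\infty^{1,1}(q)$. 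Projecting the identity $p^*h=\hat p^*\hat h+q^*W_3(V)$ to $E_\infty^{1,1}(q)$ therefore forces the first- and third-summand components to vanish separately, so the $E_\infty^{1,1}$-image of $h$ is zero and $h\in F^2$. The Poincar\'e property (T3) enters through Proposition~\ref{dualmono}, which identifies $\hat\Lambda\simeq\Lambda^*$ and makes the K\"unneth decomposition canonical.

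\emph{Sufficiency.} Conversely, choose lifts $\xi=\pi^*\xi_M$ and $h=\pi^*h_M$ with $\xi_M\in H^1(M,\mathbb{Z}_2)$, $h_M\in H^2(M,\pi_*\mathcal{C}_U)$. Applying $\pi_*$ to $0\to\mathbb{Z}\to\mathbb{R}\to\mathrm{U}(1)\to 0$ and using softness of $\mathbb{R}$ produces the short exact sequence of sheaves on $M$
\begin{equation*}
0\to\mathcal{C}_U\to\pi_*\mathcal{C}_U\to\Lambda^*\to 0,
\end{equation*}
and I take $\hat c\in H^2(M,\Lambda^*)$ to be the image of $h_M$. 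Setting $\hat\Lambda:=\Lambda^*$, Theorem~\ref{pab} produces an affine torus bundle $\hat\pi\colon\hat X\to M$ with monodromy $\hat\Lambda$ and twisted Chern class $\hat c$. To build $\hat{\mathcal{G}}$ I would work over a good cover $\{U_\alpha\}$ of $M$ trivializing both $X$ and $\hat X$: a \v{C}ech representative $g_{\alpha\beta\gamma}\colon\pi^{-1}(U_{\alpha\beta\gamma})\to\mathrm{U}(1)$ of $h_M$ pairs with the translational cocycle of $\hat X$ via the fiberwise Poincar\'e pairing $T_m\leftrightarrow\hat T_m$ to yield a cocycle for a class $\hat h_M\in H^2(M,\hat\pi_*\mathcal{C}_U)$ whose $\Lambda$-image is the twisted Chern class of $X$. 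Setting $\hat h:=\hat\pi^*\hat h_M$ and $\hat\xi:=\hat\pi^*(\xi_M+w_1(V))$, the graded gerbe $\hat{\mathcal{G}}$ with class $(\hat\xi,\hat h)$ is the T-dual candidate, and the isomorphism $\gamma$ on $C$ is assembled from the simultaneous cocycle match, with $q^*L(V)$ absorbing the grading sign contributed by $w_1(V)$.

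The main obstacle is this last, cocycle-level, construction: verifying that the fiberwise Poincar\'e pairing really does define a well-behaved global cocycle $\hat h_M$ and that the resulting $\gamma$ satisfies the Poincar\'e property (T3). This reduces to recognizing the Poincar\'e line bundle on each $T_m\times\hat T_m$ inside $\gamma$, which forces the obstruction $\delta_m\in H^1(T_m,\mathbb{Z})\otimes H^1(\hat T_m,\mathbb{Z})$ to be the canonical pairing element under $\hat\Lambda_m=\Lambda_m^*$.
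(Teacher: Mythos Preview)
Your necessity argument is essentially the paper's: project $p^*h=\hat p^*\tilde h$ to $E_\infty^{1,1}(q,\mathcal{C}_U)\subset H^1(M,\wedge^2(\Lambda^*\oplus\hat\Lambda^*))$, observe the two sides land in complementary summands, conclude both vanish so $h\in F^{2,2}(\pi,\mathcal{C}_U)$. Two small corrections: the relation from (T2) reads $p^*h=\hat p^*\hat h+q^*(W_3+\beta(\xi\smallsmile w_1))$, not just $W_3$ (harmless here, both terms come from $M$); and (T3) is not needed for necessity --- the K\"unneth splitting of $\wedge^2(\Lambda^*\oplus\hat\Lambda^*)$ is canonical without any identification $\hat\Lambda\simeq\Lambda^*$.

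Your sufficiency argument has the right opening move (define $\hat c$ as the image of $h_M$ under $H^2(M,\pi_*\mathcal{C}_U)\to H^2(M,\Lambda^*)$, build $\hat X$), but then stalls exactly where you say it does. The paper does not try to ``pair $g_{\alpha\beta\gamma}$ with the translational cocycle of $\hat X$'' by hand; instead it proceeds via two exact sequences and a spectral-sequence identity. First, the analogue of your exact sequence for $\hat\pi$ (Proposition~\ref{longex1thm}) gives a map $H^2(M,\hat\pi_*\mathcal{C}_U)\to H^2(M,\Lambda)$ whose cokernel is controlled by $\smallsmile\hat c$; since $\hat c=\delta(h_M)$ one has $c\smallsmile\hat c=0$, so a class $k'\in H^2(M,\hat\pi_*\mathcal{C}_U)$ with $\delta(k')=c$ exists. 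This step --- the vanishing $c\smallsmile\hat c=0$ --- is missing from your outline and is what guarantees your $\hat h_M$ exists at all. Second, the isomorphism $\gamma$ is produced not by an ad hoc cocycle match but by the equation $p^*(h_M)+d_2(P)=\hat p^*(\tilde k)$ in $H^2(M,q_*\mathcal{C}_U)$, where $P\in H^0(M,R^1q_*\mathcal{C}_U)$ is the fiberwise Poincar\'e class and $d_2$ is the Leray differential (Proposition~\ref{obstruction}); the discrepancy $d=p^*(h_M)+d_2(P)-\hat p^*(k')$ is killed by the computation $\delta(d_2 P)=d_2(\delta_2 P)=(-\hat c,c)$ together with the exact sequence for $q$, which forces $d=q^*(e)$ and lets you correct $k'\mapsto k'+\hat\pi^*e$. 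The Poincar\'e property then drops out of Proposition~\ref{cpp}. This $d_2(P)$ mechanism is the missing idea in your construction of $\hat{\mathcal G}$ and $\gamma$.

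A side remark: the short exact sequence you wrote, $0\to\mathcal{C}_U\to\pi_*\mathcal{C}_U\to\Lambda^*\to 0$, is not literally correct. Applying $\pi_*$ to the exponential sequence gives $0\to\mathbb{Z}\to\pi_*\mathcal{C}(\mathbb{R})\to\pi_*\mathcal{C}_U\to\Lambda^*\to 0$; setting $Q=\pi_*\mathcal{C}(\mathbb{R})/\mathbb{Z}$ one gets $0\to Q\to\pi_*\mathcal{C}_U\to\Lambda^*\to 0$, and it is only after passing to cohomology in degree $\ge 1$ that $Q$ behaves like $\mathcal{C}_U$.
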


\begin{remark}
Alternatively the Leray-Serre spectral sequence in integral cohomology for $\pi : X \to M$ determines a filtration $F^{3,3}(\pi,\mathbb{Z}) \subseteq F^{2,3}(\pi,\mathbb{Z}) \subseteq F^{1,3}(\pi,\mathbb{Z}) \subseteq F^{0,3}(\pi,\mathbb{Z}) = H^3(X,\mathbb{Z})$. We have that $(X,\mathcal{G})$ is T-dualizable if and only if $h$ thought of as an element of $H^3(X,\mathbb{Z})$, lies in the subspace $F^{2,3}(\pi,\mathbb{Z})$ and $\xi$ is a pullback of a class in $H^1(M,\mathbb{Z}_2)$. That this is an equivalent characterization will emerge from the proof of Theorem \ref{exist} below.
\end{remark}

Before we can get to the proof of Theorem \ref{exist} we need to go over some technical results. Let $q : C \to M$ be an affine torus bundle over $M$. Later $C$ will be a correspondence space, but we need not assume this for the moment. Consider the differential $d_2 : E_2^{0,1}(q,\mathcal{C}_U) \to E_2^{2,0}(q,\mathcal{C}_U)$ in the Leray spectral sequence. This is a homomorphism $d_2 : H^0(M , R^1 q_* \mathcal{C}_U) \to H^2(M , q_* \mathcal{C}_U)$. An element $l \in H^0(M , R^1 q_* \mathcal{C}_U)$ is represented by a collection $\{ L_i\}_{i \in I}$ of (unitary) line bundles $L_i \to q^{-1}(U_i)$ for some open cover $\{ U_i \}_{i \in I}$ of $M$ with the property that $L_i |_{q^{-1}(U_i \cap U_j)} \simeq L_j |_{q^{-1}(U_i \cap U_j)}$ for all non-empty double intersections. From the properties of the spectral sequence it is clear that $d_2(l)$ must represent the obstruction to finding a line bundle $L \to M$ such that after passing to a suitable refinement $\{ V_j \}_{j \in J}$, $r : J \to I$ we have $L|_{ q^{-1}(V_j)} \simeq L_{r(j)}|_{q^{-1}(V_j)}$. That is, $d_2(l)$ represents the obstruction to patching together the $\{ L_i \}$ to form a line bundle on $X$.

Let us describe in detail how the obstruction can be obtained. Since on double intersections (omitting restriction notation) we have $L_i \simeq L_j$, we can choose isomorphisms $\phi_{ij} : L_j \to L_i$ on $q^{-1}(U_{ij}) = q^{-1}(U_i) \cap q^{-1}(U_j)$. The isomorphisms $\phi_{ij}$ might fail to obey the cocycle condition, preventing us from patching the $\{L_i\}$ together. Instead we find that on the triple intersections $q^{-1}(U_{ijk})$ we get automorphisms $\phi_{ij} \phi_{jk} \phi_{ki} : L_i \to L_i$ which is then a map $g_{ijk} : q^{-1}(U_{ijk}) \to {\rm U}(1)$. We have $g_{ijk} = g_{jki}$, $g_{ikj} = g_{ijk}^{-1}$ and that $g_{ijk}$ satisfies the cocycle condition. So $\{ g_{ijk} \}$ represents a class in $H^2(M , q_* \mathcal{C}_U)$. One sees without much effort that the class of $\{ g_{ijk} \}$ in $H^2(M , q_* \mathcal{C}_U)$ is independent of the choice of representative $\{ L_i \}$ of $l$ and the choice of isomorphisms $\phi_{ij}$.
\begin{proposition}\label{obstruction}
For any $l \in H^0(M , R^1 q_* \mathcal{C}_U)$ represented by a collection $\{ L_i \}$ of line bundles, we have that $d_2(l) \in H^2(M , q_* \mathcal{C}_U)$ is represented by the cocycle $\{ g_{ijk} \}$ constructed above.
\end{proposition}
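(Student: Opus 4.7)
The plan is to realize the Leray spectral sequence for $q : C \to M$ with coefficients in $\mathcal{C}_U$ by a concrete Čech-type double complex, and then to execute the standard zigzag defining $d_2$ on the class $l$.

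First I would shrink the open cover $\{U_i\}$ of $M$ so that, on $V_i := q^{-1}(U_i)$, each $L_i$ is described by a Čech $1$-cocycle $\lambda_i$ for a sufficiently fine cover $\mathcal{W}^i$ of $V_i$, and so that the chosen isomorphisms $\phi_{ij}\colon L_j\to L_i$ on $V_{ij}$ can be written as $0$-cochains $\mu_{ij}\in\check{C}^0(\mathcal{W}^i|_{V_{ij}}\cap\mathcal{W}^j|_{V_{ij}},\mathcal{C}_U)$ satisfying $\delta\mu_{ij}=\lambda_j-\lambda_i$ (where $\delta$ is the Čech differential on $C$). Assembling these into a double complex $K^{p,q}=\check{C}^p(\mathcal{V},\check{C}^q(\mathcal{W},\mathcal{C}_U))$ — Čech in the $\mathcal{U}$-direction first, then Čech in the finer cover — the standard argument shows that the total complex computes $H^\bullet(C,\mathcal{C}_U)$ and that the $\mathcal{U}$-filtration produces the Leray spectral sequence, with $E_2^{p,q}\simeq H^p(M,R^q q_*\mathcal{C}_U)$.

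Next I would run the $d_2$-zigzag on $l$. The element $\lambda=\{\lambda_i\}\in K^{0,1}$ represents $l$: it is $\delta$-closed, and its simplicial coboundary $(\partial\lambda)_{ij}=\lambda_j-\lambda_i$ is $\delta$-exact precisely because $\delta\mu_{ij}=\lambda_j-\lambda_i$, where $\mu_{ij}$ encodes $\phi_{ij}$. By the definition of the $d_2$-differential, $d_2(l)$ is represented in $E_2^{2,0}$ by $\partial\mu\in K^{2,0}$. Computing, $(\partial\mu)_{ijk}=\mu_{jk}-\mu_{ik}+\mu_{ij}$ on each triple overlap $V_{ijk}$. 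Since $\delta(\partial\mu)=\partial(\delta\mu)=\partial\partial\lambda=0$, this cochain is $\delta$-closed, hence descends to a genuine continuous $\mathrm{U}(1)$-valued function on $V_{ijk}$; such a function is tautologically a section of $q_*\mathcal{C}_U$ over $U_{ijk}$. Unwinding the identification of $\mu_{ij}$ with $\phi_{ij}$ relative to the chosen local trivializations of the $L_i$, this function is exactly the automorphism $\phi_{ij}\phi_{jk}\phi_{ki}$ of $L_i$, that is, $g_{ijk}$.

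Finally I would verify independence of choices. Replacing each $\phi_{ij}$ by $u_{ij}\phi_{ij}$ with $u_{ij}\colon V_{ij}\to\mathrm{U}(1)$ alters $\{g_{ijk}\}$ by the Čech coboundary $\{u_{jk}u_{ik}^{-1}u_{ij}\}$ in $\check{C}^\bullet(\mathcal{U},q_*\mathcal{C}_U)$; replacing each $L_i$ by an isomorphic bundle can be absorbed into such a change of $\phi_{ij}$; and refinement of the covers is handled by the usual Čech comparison. The main obstacle is almost entirely bookkeeping: the conceptual content is just the standard spectral sequence zigzag, but one must set up the double complex carefully enough that the identification of $\mu_{ij}$ with $\phi_{ij}$ (and hence of $(\partial\mu)_{ijk}$ with $g_{ijk}$) is manifest, while keeping rigorous track of which differential is Čech-on-$M$ and which is Čech-on-$C$, along with the associated sign conventions.
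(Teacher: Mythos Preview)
Your overall strategy coincides with the paper's: build a \v{C}ech--\v{C}ech double complex indexed by a cover $\mathcal{U}$ of $M$ and a cover of $C$, identify the filtration-by-$p$ spectral sequence with (or map it to) the Leray spectral sequence, and then run the zigzag $\partial\mu$ on the cochain data $(\lambda,\mu)$ encoding the $L_i$ and the $\phi_{ij}$. The zigzag computation you describe is exactly what the paper carries out in the last paragraph of its appendix, and your independence-of-choices discussion is fine.

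The gap is the sentence ``the standard argument shows that the total complex computes $H^\bullet(C,\mathcal{C}_U)$ and that the $\mathcal{U}$-filtration produces the Leray spectral sequence.'' This is precisely the step the paper regards as the nontrivial content: it proves a separate theorem (Theorem~\ref{mss}) constructing a \emph{morphism} of spectral sequences $E_r^{p,q}(\mathcal{U},\mathcal{V},\mathcal{F})\to E_r^{p,q}(f,\mathcal{F})$ by comparing the \v{C}ech resolution to an injective resolution and then comparing the resulting pushforward complexes to a Cartan--Eilenberg resolution, and it explicitly remarks that despite how plausible the proposition sounds it is not aware of a shorter route. Note also that the paper does not claim the two spectral sequences are isomorphic, only that there is a morphism and that the given class $l$ lifts; your assertion that the double complex ``produces'' the Leray spectral sequence is stronger than what is needed and stronger than what the paper establishes. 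So your proposal is correct in outline but has relocated essentially all of the work into an unproved claim; to complete it you would need to supply the comparison with the genuine Leray spectral sequence, which is exactly the content of the paper's appendix.
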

We defer the proof of Proposition \ref{obstruction} to appendix \ref{cechapp}, since our proof is surprisingly long and technical. Despite how plausible Proposition \ref{obstruction} may sound, we are not aware of a simpler proof.\\

Let $\pi : X \to M$, $\hat{\pi} : \hat{X} \to M$ be rank $n$ affine torus bundles on $M$, define the correspondence space $C = X \times_M \hat{X}$ and maps $p,\hat{p},q$ as usual. Let $k,\hat{k}$ be classes $k \in H^2(M , \pi_* \mathcal{C}_U)$, $\hat{k} \in H^2(M , \hat{\pi}_* \mathcal{C}_U)$ and let $l \in H^0(M , R^1q_* \mathcal{C}_U)$. Suppose that $k,\hat{k}$ and $l$ are related by $p^*(k) + d_2(l) = \hat{p}^*(\hat{k})$. Let $h,\hat{h}$ be the images of $k,\hat{k}$ in $H^2(X , \mathcal{C}_U), H^2(\hat{X} , \mathcal{C}_U)$ respectively. Then it follows that $p^*(h) = \hat{p}^*(\hat{h})$. If $\mathcal{G},\hat{\mathcal{G}}$ are (ungraded) gerbes on $X,\hat{X}$ with Dixmier-Douady classes $k,\hat{k}$ then it follows that $p^*(\mathcal{G}) \simeq \hat{p}^*(\hat{\mathcal{G}})$, however more can be said. If we choose an isomorphism $\gamma : p^*(\mathcal{G}) \to \hat{p}^*(\hat{\mathcal{G}})$, then as in the discussion of the Poincar\'e property in Section \ref{defpro}, for each $m \in M$ we get a well defined class $\delta_m \in H^1(T_m , \mathbb{Z}) \otimes H^1(\hat{T}_m , \mathbb{Z})$. Let us view $H^1(T_m , \mathbb{Z}) \otimes H^1(\hat{T}_m , \mathbb{Z})$ as the quoitent group of $H^2(T_m \times \hat{T}_m , \mathbb{Z})$ by $H^2(T_m , \mathbb{Z}) \times H^2(\hat{T}_m , \mathbb{Z})$, so there is a projection $Pr : H^2(T_m \times \hat{T}_m ,\mathbb{Z}) \to H^1(T_m,\mathbb{Z}) \otimes H^1(\hat{T}_m , \mathbb{Z})$. Note also that by restriction to the stalk of $R^1q_* \mathcal{C}_U$ over $m$ we get an element $l_m \in H^2(T_m \times \hat{T}_m , \mathbb{Z})$.
\begin{proposition}\label{cpp}
It is possible to choose the isomorphism $\gamma : p^*(\mathcal{G}) \to \hat{p}^*(\hat{\mathcal{G}})$ such that if $\delta_m,l_m$ and $Pr$ are defined as above then $\delta_m = Pr(l_m)$.
\end{proposition}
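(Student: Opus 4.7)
The plan is to build $\gamma$ directly from a \v{C}ech representative of $l$, arranging that the ``difference line bundle'' on each open set $q^{-1}(U_i)$ is literally $L_i$. Choose an open cover $\{U_i\}$ of $M$ fine enough that $k$, $\hat{k}$ are represented by \v{C}ech cocycles $\{k_{ijk}\}$, $\{\hat{k}_{ijk}\}$ valued in $\pi_*\mathcal{C}_U$ and $\hat{\pi}_*\mathcal{C}_U$, and that $l$ is represented by line bundles $L_i \to q^{-1}(U_i)$ together with isomorphisms $\phi_{ij} : L_j \to L_i$ on the double overlaps $q^{-1}(U_{ij})$. Writing $g_{ijk} = \phi_{ij}\phi_{jk}\phi_{ki} : q^{-1}(U_{ijk}) \to {\rm U}(1)$, Proposition \ref{obstruction} tells us that $\{g_{ijk}\}$ is a \v{C}ech $2$-cocycle representing $d_2(l) \in H^2(M, q_*\mathcal{C}_U)$.

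The hypothesis $p^*(k) + d_2(l) = \hat{p}^*(\hat{k})$ in $H^2(M, q_*\mathcal{C}_U)$, after possibly refining the cover, then provides a $1$-cochain $\{a_{ij}\}$ in $q_*\mathcal{C}_U$ satisfying
\[
(\hat{k}_{ijk}\circ \hat{p})\,(k_{ijk}\circ p)^{-1} \;=\; g_{ijk}\cdot a_{ij}a_{jk}a_{ik}^{-1}.
\]
Using this data I define $\gamma$ by the stable-isomorphism description $(N_i,\mu_{ij})$ with $N_i := L_i$ on $q^{-1}(U_i)$ and $\mu_{ij}$ on $q^{-1}(U_{ij})$ built from $\phi_{ij}$ and $a_{ij}$ (for instance $\mu_{ij} = a_{ij}^{-1}\phi_{ij}^{-1}$), chosen so that $\mu_{ik}/(\mu_{jk}\mu_{ij}) = g_{ijk}\cdot a_{ij}a_{jk}a_{ik}^{-1}$. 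The displayed identity is then exactly the compatibility needed for $(N_i,\mu_{ij})$ to define a stable isomorphism between the bundle gerbes with gerbe products $k_{ijk}\circ p$ and $\hat{k}_{ijk}\circ \hat{p}$, that is, between $p^*\mathcal{G}$ and $\hat{p}^*\hat{\mathcal{G}}$.

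To read off $\delta_m$, fix $m \in M$ and choose any $U_i \ni m$. Take $\tau_m,\hat{\tau}_m$ to be the restrictions to the fibers $T_m,\hat{T}_m$ of the canonical local trivializations of $\mathcal{G}$ and $\hat{\mathcal{G}}$ implicit in the \v{C}ech model on $\pi^{-1}(U_i)$ and $\hat{\pi}^{-1}(U_i)$. By construction the auto-equivalence of the trivial gerbe on $q^{-1}(U_i)$ given by $\hat{p}^*(\hat{\tau}_i)^{-1}\circ\gamma\circ p^*(\tau_i)$ corresponds precisely to the line bundle $N_i = L_i$. Restricting to $q^{-1}(m) = T_m \times \hat{T}_m$, the class in $H^2(T_m \times \hat{T}_m,\mathbb{Z})$ measuring the difference between $\gamma|_{T_m\times \hat{T}_m}$ and $\hat{p}^*(\hat{\tau}_m)\circ p^*(\tau_m^{-1})$ is $L_i|_{T_m\times \hat{T}_m}$, which is exactly the stalk value $l_m$. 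Applying the K\"unneth projection gives $\delta_m = Pr(l_m)$. The main difficulty lies in the cover-refinement bookkeeping needed to represent all three classes $k$, $\hat{k}$, $l$ simultaneously on one common cover and to realize the cohomological identity $\hat{k} - k - d_2(l) = \delta a$ at the cocycle level; once this is arranged the definition of $\gamma$ is forced by the requirement $N_i = L_i$, and the identification $\delta_m = Pr(l_m)$ is automatic from the construction.
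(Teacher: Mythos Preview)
Your proposal is correct and follows essentially the same route as the paper's own proof: represent $k,\hat{k},l$ by \v{C}ech data on a common cover, realize the cohomological identity $p^*(k)+d_2(l)=\hat{p}^*(\hat{k})$ at the cocycle level via a $1$-cochain, and build the stable isomorphism $\gamma$ so that its underlying ``trivializing'' line bundles on $q^{-1}(U_i)$ are exactly the $L_i$; the conclusion $\delta_m = Pr(l_m)$ then follows by restriction to the fiber. The only cosmetic difference is that the paper absorbs your $a_{ij}$ into the $\phi_{ij}$ at the outset (so that $p^*(k_{ijk})\,g_{ijk}=\hat{p}^*(\hat{k}_{ijk})$ holds on the nose), whereas you carry $a_{ij}$ along inside $\mu_{ij}$; this is the same construction written two ways.
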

\begin{proof}
First choose an open cover $\{ U_i \}$ on $M$ so that $k,\hat{k},l$ admit corresponding \v{C}ech representatives $\{ k_{ijk} \}$, $\{ \hat{k}_{ijk} \}$, $\{ L_i \}$. Thus $k_{ijk}$ are functions $k_{ijk} : \pi^{-1}(U_{ijk}) \to {\rm U}(1)$, $\hat{k}_{ijk}$ are functions $\hat{k}_{ijk} : \hat{\pi}^{-1}(U_{ijk}) \to {\rm U}(1)$ and $L_i$ are line bundles $L_i \to q^{-1}(U_i)$. As in the construction of $d_2 l$ we choose isomorphisms $\phi_{ij} : L_j \to L_i$ and get a corresponding cocycle $\{ g_{ijk} \}$. The equality $p^*(k) + d_2(l) = \hat{p}^*(\hat{k})$ implies that (after refining the cover if necessary) we can find maps $h_{ij} : q^{-1}(U_{ij}) \to {\rm U}(1)$ so that
\begin{equation*}
p^*(k_{ijk})  g_{ijk} = \hat{p}^*(\hat{k}_{ijk})  h_{ij}  h_{jk}  h_{ki}.
\end{equation*}
We can redefine the isomorphisms $\phi_{ij}$ to absorb the $h_{ij}$ terms and arrive at
\begin{equation}\label{morphcond}
p^*(k_{ijk})  g_{ijk} = \hat{p}^*(\hat{k}_{ijk}).
\end{equation}
To proceed let us interpret $k_{ijk}$ as a gerbe isomorphic to $\mathcal{G}$. Indeed define line bundles $K_{ij} \to \pi^{-1}(U_{ij})$ by simply setting $K_{ij} = 1$, the trivial bundle and with gerbe multiplication $\theta_{ijk} : K_{ij} \otimes K_{jk} \to K_{ik}$ given by $\theta_{ijk}(1 \otimes 1) = k_{ijk}$. Construct a similar gerbe $\{ \hat{K}_{ij}, \hat{\theta}_{ijk} \}$ out of the cocycle $\{ \hat{k}_{ijk} \}$. Note that the isomorphisms $\phi_{ij} : L_j \to L_i$ determine local sections $\sigma_{ij} : q^{-1}(U_{ij}) \to L_{ij}$ where $L_{ij} = L_i \otimes L_j^*$ is the trivializable gerbe built out of $\{ L_i \}$. Let $\lambda_{ijk} : L_{ij} \otimes L_{jk} \to L_{ik}$ be the gerbe multiplication for this trivializable gerbe. Thus $\lambda_{ijk}(\sigma_{ij},\sigma_{jk}) = g_{ijk} \sigma_{ik}$. We define line bundle isomorphisms $\psi_{ij} : p^*(K_{ij}) \otimes L_{ij} \to \hat{p}^*( \hat{K}_{ij})$ by setting $\psi_{ij}( 1 \otimes \sigma_{ij}) = 1$. The composition 
\begin{equation*}\xymatrix{
(p^*(K_{ij}) \otimes L_{ij}) \otimes (p^*(K_{jk}) \otimes L_{jk}) \ar[rr]^-{p^*\theta_{ijk} \otimes \lambda_{ijk}} & & (p^*(K_{ik}) \otimes L_{ik}) \ar[r]^-{\psi_{ik}} & \hat{p}^*( \hat{K}_{ik})
}
\end{equation*}
sends $(1 \otimes \sigma_{ij}) \otimes (1 \otimes \sigma_{jk})$ to $p^*(k_{ijk}) g_{ijk}$. On the other hand the composition
\begin{equation*}\xymatrix{
(p^*(K_{ij}) \otimes L_{ij}) \otimes (p^*(K_{jk}) \otimes L_{jk}) \ar[rr]^-{\psi_{ij} \otimes \psi_{jk}} & & \hat{p}^*(\hat{K}_{ij}) \otimes \hat{p}^*(\hat{K}_{jk}) \ar[r]^-{\hat{p}^*\hat{\theta}_{ijk}} & \hat{p}^*(\hat{K}_{ik})
}
\end{equation*}
sends $(1 \otimes \sigma_{ij}) \otimes (1 \otimes \sigma_{jk})$ to $\hat{p}^*(\hat{k}_{ijk})$. By Equation (\ref{morphcond}) these two compositions agree so that the $\{ \psi_{ij} \}$ define an isomorphism of gerbes. We take $\gamma : p^*(\mathcal{G}) \to \hat{p}^*(\hat{\mathcal{G}})$ to be this isomorphism. For any $U_i$ with $m \in U_i$ we have that the line bundle $L_i$ restricted to the fiber over $m$ has Chern class $l_m \in H^2(T_m \times \hat{T}_m , \mathbb{Z})$. Restricted to the fibers $\{ K_{ij},\theta_{ijk} \}$ and $\{ \hat{K}_{ij} , \hat{\theta}_{ijk} \}$ are of course trivializable. Comparing with the restriction of the $\{ \psi_{ij} \}$ and using the definition of the class $\delta$ as in Section \ref{defpro} we immediately see the relation $\delta_m = Pr(l_m)$. 
\end{proof}

Let $X,\hat{X} \to M$ be rank $n$ affine torus bundles, $\mathcal{G},\hat{\mathcal{G}}$ gerbes on $X,\hat{X}$ and $L(V)$ the lifting gerbe of the vertical bundle of $X \to M$. Set $\tilde{\mathcal{G}} = \hat{\mathcal{G}} \otimes \hat{\pi}^*(L(V))$ and suppose $\mathcal{G},\tilde{\mathcal{G}}$ have graded Dixmier-Douady classes of the form $[\mathcal{G}] = (\pi^*(\xi),\pi^*(k)) \in H^1(X,\mathbb{Z}_2) \times H^2(X,\mathcal{C}_U)$, $[\tilde{\mathcal{G}}] = (\hat{\pi}^*(\tilde{\xi}),\hat{\pi}^*(\tilde{k})) \in H^1(\hat{X},\mathbb{Z}_2) \times H^2(\hat{X},\mathcal{C}_U)$, where $\xi,\tilde{\xi} \in H^1(M,\mathbb{Z}_2)$, $k \in H^2(M , \pi_* \mathcal{C}_U)$, $\tilde{k} \in H^2( M , \hat{\pi}_* \mathcal{C}_U)$. Suppose additionally that $\xi = \tilde{\xi}$ and that the local systems $\Lambda^* = R^1 \pi_* \mathbb{Z}$, $\hat{\Lambda}^* = R^1 \hat{\pi}_* \mathbb{Z}$ are dual. Fix an isomorphism $\delta : \Lambda \to \hat{\Lambda}^*$. By Proposition \ref{cpp} a sufficient condition for $(X,\mathcal{G}),(\hat{X},\hat{\mathcal{G}})$ to be T-dual is the equality
\begin{equation}\label{plbr}
p^*(k) + d_2 P = \hat{p}^*(\tilde{k}),
\end{equation}
where $P \in H^0(M , R^1 q_* \mathcal{C}_U)$ is defined as follows: choose an open cover $\{ U_i \}$ of $M$ for which there are local trivializations $\pi^{-1}(U_i) \simeq U_i \times T$, $\hat{\pi}^{-1}(U_i) \simeq U_i \times \hat{T}$ (where $T,\hat{T}$ denote rank $n$ tori). The local system $\Lambda \otimes \hat{\Lambda}$ can be thought of as a subsheaf of the local system $R^2 q_* \mathbb{Z} \simeq \wedge^2( \Lambda^* \oplus \hat{\Lambda}^*) \simeq \wedge^2 ( \hat{\Lambda} \oplus \Lambda)$. Thinking of $\delta$ as a section of $R^2 q_* \mathbb{Z}$, we have on $q^{-1}(U_i) \simeq U_i \times T \times \hat{T}$ a corresponding line bundle $P_i \to U_i \times T \times \hat{T}$ which we can think of as a locally defined Poincar\'e line bundle. Then $P$ is defined as the class $\{ P_i \} \in H^0(M , R^1 q_* \mathcal{C}_U)$ made up of all the local Poincar\'e line bundles.

\begin{remark}
We can think of the element $P \in H^0( M , R^1q_* \mathcal{C}_U)$ as being the collection $\{ P_i \}$ of Poincar\'e line bundles along the fibers of the correspondence space. Then $d_2(P) \in H^2(M , q_* \mathcal{C}_U)$ can be thought of on the one hand as the obstruction to patching the Poincar\'e line bundles together and on the other hand as a trivializable gerbe $\{ P_i \otimes P_j^* \}$ constructed from the Poincar\'e line bundles. Equation (\ref{plbr}) is central to T-duality. Roughly it states that the gerbes $\mathcal{G},\tilde{\mathcal{G}}$ pulled back to $C$ differ by the trivializable gerbe $\{ P_i \otimes P_j^* \}$.
\end{remark}

We will need to compare the Leray spectral sequences for the sheaves $\mathbb{Z}$ and $\mathcal{C}_U$. Since the sheaf cohomology of these two sheaves are closely related by the coboundary in the long exact sequence associated to the exponential sequence
\begin{equation}\label{expseq}
0 \to \mathbb{Z} \to \mathcal{C}(\mathbb{R}) \to \mathcal{C}_U \to 0,
\end{equation}
one would expect a relation between the Leray spectral sequences. This is the case, but the only proof we are aware of is long and complicated. We provide the full details of the proof in a separate paper \cite{bar2}. Let us state here the results from \cite{bar2} that we need.
\begin{theorem}\label{cobfilt}
Let $\pi : X \to M$ be a map between spaces $X,M$, with every subspace of $X$ paracompact. Let $E_r^{p,q}(\pi,\mathbb{Z})$,$E_r^{p,q}(\pi,\mathcal{C}_U)$ be the Leray spectral sequences associated to the sheaves $\mathbb{Z},\mathcal{C}_U$ and let $F^{p,n}(\pi,\mathbb{Z}),F^{p,n}(\pi,\mathcal{C}_U)$ be the associated filtrations on $H^n(X,\mathbb{Z})$,$H^n(X,\mathcal{C}_U)$. Then
\begin{itemize}
\item{There is a morphism $\delta_r : E_r^{p,q}(\pi,\mathcal{C}_U) \to E_r^{p,q+1}(\pi,\mathbb{Z})$ of spectral sequences, which at the $E_2$ stage is the map $H^p(M,R^q \pi_* \mathcal{C}_U) \to H^p(M,R^{q+1} \pi_*\mathbb{Z})$ induced by the coboundary map $R^q \pi_* \mathcal{C}_U \to R^{q+1} \pi_* \mathbb{Z}$.}
\item{The coboundary $\delta : H^n(X,\mathcal{C}_U) \to H^{n+1}(X,\mathbb{Z})$ restricts to morphisms $\delta : F^{p,p+q}(\pi,\mathcal{C}_U) \to F^{p,p+q+1}(\pi,\mathbb{Z})$ which are isomorphisms whenever $p+q \ge 1$ and surjective for $p=q=0$.}
\item{The coboundary maps between filtrations induce quotient maps $E_\infty^{p,q}(\pi,\mathcal{C}_U) \to E_\infty^{p,q+1}(\pi,\mathbb{Z})$ which are isomorphisms for $q \ge 1$ and surjections for $q = 0$. Moreover these maps coincide with the limits $\delta_\infty = \lim_r \delta_r$.}
\end{itemize}
\end{theorem}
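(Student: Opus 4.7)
The plan rests on two sheaf-theoretic inputs and a careful construction of a morphism of spectral sequences. First, because every subspace of $X$ is paracompact, the sheaf $\mathcal{C}(\mathbb{R})$ of continuous $\mathbb{R}$-valued functions is fine (continuous partitions of unity exist on any $\pi^{-1}(U)$) and therefore acyclic: $H^n(U, \mathcal{C}(\mathbb{R})) = 0$ for every open $U \subseteq X$ and every $n \geq 1$, and consequently $R^q \pi_* \mathcal{C}(\mathbb{R}) = 0$ for $q \geq 1$. Applying the long exact sequences attached to (\ref{expseq}) yields sheaf isomorphisms $R^q \pi_* \mathcal{C}_U \xrightarrow{\sim} R^{q+1} \pi_* \mathbb{Z}$ for $q \geq 1$ and a surjection for $q = 0$, and similarly the topological coboundary $\delta : H^n(X, \mathcal{C}_U) \to H^{n+1}(X, \mathbb{Z})$ is an isomorphism for $n \geq 1$ and surjective for $n = 0$. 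These are precisely the global incarnations of what the theorem predicts on the $E_2$ page and on the abutment.

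Next I would construct the morphism $\delta_r$ of Leray spectral sequences. The cleanest route is to lift (\ref{expseq}) to a short exact sequence $0 \to J^\bullet \to K^\bullet \to I^\bullet \to 0$ of (Cartan--Eilenberg or Godement) injective resolutions compatible with the exponential sequence. Because each term is $\pi_*$-acyclic, applying $\pi_*$ gives a short exact sequence of complexes on $M$ whose total cohomology recovers the Leray abutments; filtering by the column degree of the resolution induces the three Leray spectral sequences. The snake-lemma connecting homomorphism on the resulting exact couples then produces the morphism $\delta_r : E_r^{p,q}(\pi, \mathcal{C}_U) \to E_r^{p, q+1}(\pi, \mathbb{Z})$. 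A direct identification at $E_2 = H^p(M, R^q \pi_* -)$ shows that $\delta_2$ is the map induced by the sheaf coboundary $R^q \pi_* \mathcal{C}_U \to R^{q+1} \pi_* \mathbb{Z}$, and by construction the limiting map $\delta_\infty = \lim_r \delta_r$ is the associated graded of the topological coboundary $\delta$ with respect to the Leray filtrations.

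With the morphism in hand the remaining claims follow by standard diagram chases. From the first step, $\delta_2^{p,q}$ is an isomorphism for $q \geq 1$ and a surjection for $q = 0$ (since these properties on the coefficient sheaves pass to $H^p(M, -)$); morphisms of spectral sequences transmit isomorphisms and surjections from one page to the next, so the same holds on every $E_r^{p,q}$ and in the limit on $E_\infty^{p,q}$. Naturality of the Leray filtration with respect to chain maps forces $\delta\bigl(F^{p,n}(\pi, \mathcal{C}_U)\bigr) \subseteq F^{p, n+1}(\pi, \mathbb{Z})$, and a descending induction on $p$, starting from $F^{n+1, n} = 0$ and using the five lemma on the short exact sequences
\[
0 \to F^{p+1, n}(\pi, \mathcal{F}) \to F^{p, n}(\pi, \mathcal{F}) \to E_\infty^{p, n-p}(\pi, \mathcal{F}) \to 0
\]
with the $E_\infty$ isomorphism as input, upgrades this to the desired filtration isomorphism whenever $n \geq 1$; for $n = 0$ the same induction gives surjectivity. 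The main obstacle throughout is the bookkeeping needed to verify that the connecting morphism produced from the short exact sequence of resolutions agrees, on the abutment, with the topological coboundary $\delta$ and, on $E_2$, with the sheaf-level coboundary; this compatibility, while standard in spirit, requires explicit care with signs, choices of resolutions, and filtration conventions, which is presumably why the author defers the full argument to \cite{bar2}.
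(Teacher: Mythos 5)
The paper itself does not prove Theorem \ref{cobfilt}; it defers the proof to \cite{bar2}, describing it as ``long and complicated,'' so there is no in-paper argument to compare against. Judged on its own terms, your setup is sound: fineness of $\mathcal{C}(\mathbb{R})$ and $\pi_*\mathcal{C}(\mathbb{R})$, the resulting sheaf isomorphisms $R^q\pi_*\mathcal{C}_U \simeq R^{q+1}\pi_*\mathbb{Z}$ for $q \ge 1$, and the construction of $\delta_r$ from a termwise-split short exact sequence of compatible resolutions are all reasonable, and the first bullet is plausibly obtainable this way. The trouble starts with the assertion that $\delta_2^{p,0}$ is surjective ``since these properties on the coefficient sheaves pass to $H^p(M,-)$.'' Surjectivity of a sheaf morphism does not pass to cohomology: with $Q = \pi_*\mathcal{C}(\mathbb{R})/\mathbb{Z}$ as in the proof of Proposition \ref{longex1thm}, the cokernel of $H^p(M,\pi_*\mathcal{C}_U) \to H^p(M,R^1\pi_*\mathbb{Z})$ injects into $H^{p+1}(M,Q) \cong H^{p+2}(M,\mathbb{Z})$ and its kernel is the image of $H^p(M,Q) \cong H^{p+1}(M,\mathbb{Z})$, so $\delta_2^{p,0}$ is in general neither surjective nor injective. (Consistently, the theorem claims surjectivity at $q=0$ only on $E_\infty$, not on $E_2$.)

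This defect propagates into both of your subsequent deductions. The principle that a morphism of spectral sequences ``transmits isomorphisms and surjections from one page to the next'' is only automatic when the map is an isomorphism in every bidegree; here the $q=0$ row contaminates the rows $q \ge 1$ through the differentials landing in it. For instance, surjectivity of $\delta_3^{p,1}$ requires that for $x \in E_2^{p,1}(\pi,\mathcal{C}_U)$ with $\delta_2(d_2x)=0$ one has $d_2x=0$, i.e.\ injectivity of $\delta_2^{p+2,0}$, which fails in general; so the $E_\infty$ statement does not follow formally. Likewise your descending five-lemma induction for the filtration statement breaks at its base case $p=n$: there the left-hand vertical map is $0 \to F^{n+1,n+1}(\pi,\mathbb{Z}) = E_\infty^{n+1,0}(\pi,\mathbb{Z})$, which is not surjective, while the right-hand vertical $E_\infty^{n,0}(\pi,\mathcal{C}_U) \to E_\infty^{n,1}(\pi,\mathbb{Z})$ is at best surjective, so the five lemma yields neither injectivity nor surjectivity of $F^{n,n}(\pi,\mathcal{C}_U) \to F^{n,n+1}(\pi,\mathbb{Z})$. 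What is actually needed is the finer fact that the snake map identifies the kernel on the $E_\infty^{p,0}$ column with the cokernel on the $F^{p+1,\ast}$ column, or equivalently that $\delta^{-1}$ preserves the Leray filtration in total degree $\ge 1$; establishing this (together with the compatibility of the direct-image connecting maps with $d_2$) is precisely the hard content supplied by \cite{bar2}. As written, the second and third bullets are not proved.
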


There is one more ingredient we need before we can get to the proof of Theorem \ref{exist}.
\begin{proposition}\label{longex1thm}
Let $\pi : X \to M$ be a rank $n$ affine torus bundle on $M$ with monodromy local system $\Lambda = (R^1 \pi_* \mathbb{Z})^*$ and twisted Chern class $c \in H^2(M , \Lambda)$. There is an exact sequence
\begin{equation}\label{longex1}
\xymatrix{
H^2(M , \mathcal{C}_U) \ar[r]^-{\pi^*} & H^2(M , \pi_*\mathcal{C}_U) \ar[r]^-{\delta} & H^2(M,\Lambda^*) \ar[r]^-{ \smallsmile c } & H^4(M , \mathbb{Z})
}
\end{equation}
where the map $\delta : H^2(M , \pi_*\mathcal{C}_U) \to H^2(M,\Lambda^*)$ corresponds to the sheaf morphism $\delta : \pi_*\mathcal{C}_U \to R^1 \pi_* \mathbb{Z} = \Lambda^*$ in the long exact sequence of higher direct image functors of $\pi_*$ applied to the exponential sequence (\ref{expseq}).
\end{proposition}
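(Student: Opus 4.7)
The plan is to apply $R^\bullet \pi_*$ to the exponential sequence $0 \to \mathbb{Z} \to \mathcal{C}(\mathbb{R}) \to \mathcal{C}_U \to 0$ on $X$ and analyze the resulting sequences of sheaves on $M$. Since the fibers of $\pi$ are connected tori, $\pi_*\mathbb{Z} = \mathbb{Z}_M$ and $R^1\pi_*\mathbb{Z} = \Lambda^*$. Because $\mathcal{C}(\mathbb{R})$ is soft and every subspace of $X$ is paracompact, the Leray spectral sequence for $\pi$ with coefficients in $\mathcal{C}(\mathbb{R})$ collapses onto row zero and its abutment $H^{\ge 1}(X,\mathcal{C}(\mathbb{R}))$ vanishes. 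This forces $R^q\pi_*\mathcal{C}(\mathbb{R}) = 0$ for $q \ge 1$ and also shows that $\pi_*\mathcal{C}(\mathbb{R})$ is acyclic on $M$ in positive degrees. The long exact sequence of higher direct images consequently truncates to
\[
0 \to \mathbb{Z} \to \pi_*\mathcal{C}(\mathbb{R}) \to \pi_*\mathcal{C}_U \xrightarrow{\delta} \Lambda^* \to 0,
\]
and I split this in the middle via $\mathcal{K} := \ker\delta$ into two short exact sequences, (i) $0 \to \mathbb{Z} \to \pi_*\mathcal{C}(\mathbb{R}) \to \mathcal{K} \to 0$ and (ii) $0 \to \mathcal{K} \to \pi_*\mathcal{C}_U \to \Lambda^* \to 0$.

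For exactness at $H^2(M,\pi_*\mathcal{C}_U)$, the long exact sequence of (ii) identifies the kernel of $\delta$ on $H^2$ with the image of $H^2(M,\mathcal{K}) \to H^2(M,\pi_*\mathcal{C}_U)$. Adjunction gives a morphism of short exact sequences from the standard exponential sequence on $M$ down to (i), producing a commutative ladder of cohomology long exact sequences. Since both $\mathcal{C}(\mathbb{R})$ and $\pi_*\mathcal{C}(\mathbb{R})$ are acyclic in positive degrees, the connecting maps $H^2(M,\mathcal{C}_U) \xrightarrow{\sim} H^3(M,\mathbb{Z})$ and $H^2(M,\mathcal{K}) \xrightarrow{\sim} H^3(M,\mathbb{Z})$ are isomorphisms and compatible, so the induced pullback $H^2(M,\mathcal{C}_U) \to H^2(M,\mathcal{K})$ is itself an isomorphism. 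Therefore the image of $\pi^* \colon H^2(M,\mathcal{C}_U) \to H^2(M,\pi_*\mathcal{C}_U)$ coincides with $\ker\delta$.

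Exactness at $H^2(M,\Lambda^*)$ is where Theorem \ref{cobfilt} is brought to bear. Under the identifications $E_2^{2,0}(\pi,\mathcal{C}_U) = H^2(M,\pi_*\mathcal{C}_U)$ and $E_2^{2,1}(\pi,\mathbb{Z}) = H^2(M,\Lambda^*)$, the map $\delta$ appearing in the statement is exactly the morphism $\delta_2$ of Theorem \ref{cobfilt}, while by Theorem \ref{atblsd} the differential $d_2^{\mathbb{Z}} \colon E_2^{2,1} \to E_2^{4,0} = H^4(M,\mathbb{Z})$ is cup product with $c$. Naturality $d_2^{\mathbb{Z}} \circ \delta_2 = \delta_2 \circ d_2^{\mathcal{C}_U}$, together with $E_2^{4,-1}(\pi,\mathcal{C}_U) = 0$, yields $(\smallsmile c) \circ \delta = 0$. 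Conversely, let $\alpha \in H^2(M,\Lambda^*)$ satisfy $\alpha \smallsmile c = 0$; then $\alpha$ is a $d_2$-cycle and represents a class in $E_\infty^{2,1}(\pi,\mathbb{Z})$. The third assertion of Theorem \ref{cobfilt} states that $\delta_\infty \colon E_\infty^{2,0}(\pi,\mathcal{C}_U) \to E_\infty^{2,1}(\pi,\mathbb{Z})$ is an isomorphism, so I may lift to $\beta \in E_2^{2,0}(\pi,\mathcal{C}_U) = H^2(M,\pi_*\mathcal{C}_U)$ with $\delta(\beta) - \alpha$ in the image of the incoming differential $d_2^{\mathbb{Z},0,2} \colon H^0(M,\wedge^2 \Lambda^*) \to H^2(M,\Lambda^*)$. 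Writing $\delta(\beta) - \alpha = \eta \smallsmile c$ and using the canonical isomorphism $R^1\pi_*\mathcal{C}_U \cong R^2\pi_*\mathbb{Z} = \wedge^2\Lambda^*$, naturality of $\delta_2$ with $d_2$ gives $\eta \smallsmile c = \delta(d_2^{\mathcal{C}_U}\eta)$, whence $\alpha = \delta(\beta - d_2^{\mathcal{C}_U}\eta)$ lies in the image of $\delta$.

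The principal technical hurdle is this last step, which requires keeping track of two spectral sequences in tandem and applying Theorem \ref{cobfilt} both to lift permanent cycles through $\delta_\infty$ and to recognise the correction term $\eta \smallsmile c$ as itself lying in the image of $\delta$; everything else is routine sheaf-cohomological bookkeeping once the four-term resolution is in hand.
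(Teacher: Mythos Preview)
Your proof is correct and follows essentially the same route as the paper: both split the four-term sequence of direct images through the intermediate sheaf $\mathcal{K} = \pi_*\mathcal{C}(\mathbb{R})/\mathbb{Z}$ (the paper calls it $Q$), use acyclicity of $\pi_*\mathcal{C}(\mathbb{R})$ to identify $H^2(M,\mathcal{K})$ with $H^3(M,\mathbb{Z}) \cong H^2(M,\mathcal{C}_U)$, and then handle exactness at $H^2(M,\Lambda^*)$ by the same spectral-sequence chase via Theorem~\ref{cobfilt}, lifting through $\delta_\infty$ and correcting by a $d_2$-boundary coming from $E_2^{0,2}(\pi,\mathbb{Z}) \cong E_2^{0,1}(\pi,\mathcal{C}_U)$. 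One small inaccuracy: Theorem~\ref{cobfilt} only asserts that $\delta_\infty : E_\infty^{2,0}(\pi,\mathcal{C}_U) \to E_\infty^{2,1}(\pi,\mathbb{Z})$ is a \emph{surjection} (the $q=0$ case), not an isomorphism, but surjectivity is all you use so the argument is unaffected.
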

\begin{proof}
Starting with the exponential sequence (\ref{expseq}) we get an induced long exact sequence of higher direct image sheaves. The relevant part is the following exact sequence:
\begin{equation*}
0 \to \pi_*\mathbb{Z} \to \pi_* \mathcal{C}(\mathbb{R}) \to \pi_* \mathcal{C}_U \buildrel \delta \over \to R^1 \pi_* \mathbb{Z} \to R^1 \pi_* \mathcal{C}(\mathbb{R}).
\end{equation*}
We note that $\pi_* \mathbb{Z} = \mathbb{Z}$ as the fibers of $X \to M$ are connected and that $R^1 \pi_* \mathcal{C}(\mathbb{R}) = 0$, since by assumption every subset of $X$ is paracompact and we then use partitions of unity. Letting $Q = \pi_* \mathcal{C}(\mathbb{R}) / \mathbb{Z}$ be the quotient sheaf we now get short exact sequences:
\begin{eqnarray}
&& 0 \to Q \to \pi_* \mathcal{C}_U \to R^1 \pi_* \mathbb{Z} \to 0, \label{exact1} \\
&& 0 \to \mathbb{Z} \to \pi_* \mathcal{C}(\mathbb{R}) \to Q \to 0.
\end{eqnarray}
Observe that $\pi_* \mathcal{C}(\mathbb{R})$ is a fine sheaf so that for $k \ge 1$ we have $H^k(M,Q) \simeq H^{k+1}(M,\mathbb{Z})$. Combining this with the long exact sequence for (\ref{exact1}) we get a long exact sequence part of which looks like
\begin{equation*}\xymatrix{
H^2(M , \mathcal{C}_U) \ar[r]^-{\pi^*} & H^2(M , \pi_*\mathcal{C}_U) \ar[r]^-{\delta} & H^2(M,\Lambda^*) \ar[r]^-{ \beta} & H^4(M , \mathbb{Z}).
}
\end{equation*}
We can prove that $\beta$ is given by the cup product with the twisted Chern class, but the proof is not so short, so it will be easier to simply replace $\beta$ by $\smallsmile c$ and show that exactness still holds at $H^2(M,\Lambda^*)$. Note first of all that $\delta(x) \smallsmile c = 0$ for all $x \in H^2(M , \pi_*\mathcal{C}_U)$ follows from the commutativity of the following diagram:
\begin{equation*}\xymatrix{
E_2^{2,1}(\pi,\mathbb{Z}) \ar[r]^{d_2} & E_2^{4,0}(\pi,\mathbb{Z}) \\
E_2^{2,0}(\pi,\mathcal{C}_U) \ar[u]^\delta \ar[r]^{d_2} & 0 \ar[u]
}
\end{equation*}
together with that fact that $d_2 : E_2^{2,1}(\pi,\mathbb{Z}) \to E_2^{4,0}(\pi,\mathbb{Z})$ is given cupping with $c$ to get a class in $H^4(M,\Lambda \otimes \Lambda^*)$ and pairing off the dual local systems.\\

Let $x \in H^2(M,\Lambda^*)$ be such that $x \smallsmile c = 0$. As we have seen $x \smallsmile c = d_2 x \in E_2^{4,0}(\pi,\mathbb{Z})$, so the condition $d_2x = 0$ implies that there exists an element $\tilde{x} \in F^{2,3}(\pi,\mathbb{Z}) \subseteq H^3(X,\mathbb{Z})$ such that $x \in H^2(M,\Lambda^*) = E_2^{2,1}(\pi,\mathbb{Z})$ is a representative for the image of $\tilde{x}$ in $E_\infty^{2,1}(\pi,\mathbb{Z}) = F^{2,3}(\pi,\mathbb{Z})/F^{3,3}(\pi,\mathbb{Z})$. By Theorem \ref{cobfilt} there exists a unique $\tilde{y} \in F^{2,2}(\pi,\mathcal{C}_U)$ such that $\tilde{x} = \delta( \tilde{y})$. Thinking of $\tilde{y}$ as an element $\tilde{y} \in E_\infty^{2,0}(\pi,\mathcal{C}_U)$ we have that $\delta_{\infty}(\tilde{y})$ is the projection of $\tilde{x}$ to $E_\infty^{2,1}(\pi,\mathbb{Z})$. Let $y' \in E_2^{2,0}(\pi,\mathcal{C}_U)$ be a lift of $\tilde{y}$ to the $E_2$ stage. Then it follows that $\delta_2(y'),x \in E_2^{2,1}(\pi,\mathbb{Z})$ represent the same element in $E_\infty^{2,1}(\pi,\mathbb{Z})$. Therefore there exists $w \in E_2^{0,3}(\pi,\mathbb{Z})$ such that $\delta_2(y') + d_2(w) = x$. However it is clear that the coboundary $\delta_2 : E_2^{0,2}(\pi,\mathcal{C}_U) \to E_2^{0,3}(\pi,\mathbb{Z})$ is an isomorphism so there is a $z \in E_2^{0,2}(\pi,\mathcal{C}_U)$ such that $w = \delta_2 (z)$. Let $y = y' + d_2(z)$. We immediately see that $\delta_2(y) = x$ as required.
\end{proof}

\begin{proof}[Proof of Theorem \ref{exist}]
We first show sufficiency: suppose $h$ is in the image of $H^2(M , \pi_*\mathcal{C}_U ) \to H^2(X , \mathcal{C}_U)$. We will construct a T-dual. By assumption there exists $k \in H^2(M , \pi_*\mathcal{C}_U )$ that maps to $h \in H^2(X , \mathcal{C}_U)$. Let $\hat{c} = \delta(k)$, where $\delta$ is the map $H^2(M , \pi_*\mathcal{C}_U) \to H^2(M,\Lambda^*)$ in the exact sequence (\ref{longex1}). Therefore $c \smallsmile \hat{c} = 0$. Let $\hat{\Lambda} = \Lambda^*$ be the local system dual to $\Lambda$. The pair $(\hat{\Lambda},\hat{c})$ determines an affine torus bundle $\hat{\pi} : \hat{X} \to M$ such that $\hat{\Lambda} = (R^1 \hat{\pi}_* \mathbb{Z})^*$ is the monodromy local system and $\hat{c} \in H^2( M , \hat{\Lambda} ) = H^2( M , \Lambda^* )$ is the twisted Chern class.\\

Just as we constructed the exact sequence (\ref{longex1}) out of the torus bundle $X \to M$, we have a similar exact sequence corresponding to $\hat{X} \to M$ which has the form
\begin{equation}\label{longex2}
\xymatrix{
H^2(M , \mathcal{C}_U) \ar[r]^-{\hat{\pi}^*} & H^2(M , \hat{\pi}_*\mathcal{C}_U) \ar[r]^-{\delta} & H^2(M,\Lambda) \ar[r]^-{ \smallsmile \hat{c} } & H^4(M , \mathbb{Z}).
}
\end{equation}
Making use of $c \smallsmile \hat{c} = 0$ we may find a $k' \in H^2(M , \hat{\pi}_*\mathcal{C}_U)$ so that $\delta(k') = c$. Note that such a $k'$ is only unique up to addition of terms $\hat{\pi}^*(e)$, where $e \in H^2(M , \mathcal{C}_U)$. We will soon make use of this freedom.\\

The local systems of $X,\hat{X}$ are dual by construction of $\hat{X}$, so there is a section $\delta$ of $\Lambda^* \otimes \hat{\Lambda}^* \subset \wedge^2 ( \Lambda^* \oplus \hat{\Lambda}^* )$. Note that $\Lambda \oplus \hat{\Lambda}$ is the monodromy local system for the correspondence space $C = X \times_M \hat{X}$. Therefore $R^1q_* \mathcal{C}_U \simeq \wedge^2 ( \Lambda^* \oplus \hat{\Lambda}^*)$ and $\delta$ determines a corresponding element $P \in H^0(C , R^1 q_* \mathcal{C}_U)$ which represents the Poincar\'e line bundles of the fibers of $C$.

We set $d = p^*(k) + d_2(P) - \hat{p}^*(k')$. The idea now is that if $d \neq 0$ then we will suitably modify our choice of $k'$ to absorb away $d$. Let us write out the exact sequence for $q : C \to M$ determined in Proposition \ref{longex1thm}. It is as follows:
\begin{equation}\label{longex3}
\xymatrix{
H^2(M , \mathcal{C}_U) \ar[r]^-{q^*} & H^2(M , q_*\mathcal{C}_U) \ar[r]^-{\delta} & H^2(M,\Lambda^*) \oplus H^2(M,\hat{\Lambda}^*) \ar[r]^-{ \alpha } & H^4(M , \mathbb{Z}),
}
\end{equation}
where $\alpha : H^2(M,\Lambda^*) \oplus H^2(M,\hat{\Lambda}^*) \to H^4(M,\mathbb{Z})$ is the map $\alpha(x,y) = x \smallsmile c + y \smallsmile \hat{c}$. We note that $d \in H^2(M , q_*\mathcal{C}_U)$, so we may apply to $d$ the map $\delta$ in (\ref{longex3}). The result is $\delta(d) = (\hat{c},-c) + \delta d_2(P)$. But $\delta d_2(P) = d_2 \delta_2(P)$, where $\delta_2 : E_2^{0,1}(q,\mathcal{C}_U) \to E_2^{0,2}(q,\mathbb{Z})$ is the coboundary map as in Theorem \ref{cobfilt}. Thus $\delta_2(P) \in H^0(M , \wedge^2( \Lambda^* \oplus \hat{\Lambda}^*))$ is the Chern class of the Poincar\'e line bundles of the fibers. From this and Theorem \ref{atblsd} we deduce that\footnote{Actually the value of $d_2 \delta_2(P)$ can equal $(-\hat{c},c)$ or $(\hat{c},-c)$ depending on the convention used to identify $\Lambda^* \otimes \hat{\Lambda}^*$ as a subsheaf of $\wedge^2( \Lambda^* \oplus \hat{\Lambda}^*)$. Of course we are free to choose the convention where $d_2 \delta_2(P) = (-\hat{c},c)$.} $d_2 \delta_2(P) = (-\hat{c},c)$, so we conclude that $\delta(d) = 0$. Looking again at the exact sequence (\ref{longex3}) we find that $d = q^*(e)$ for some $e \in H^2(M,\mathcal{C}_U)$. If we let $\tilde{k} = k' + \hat{\pi}^*(e)$ then we still have $\delta(\tilde{k}) = c$, but we also have
\begin{equation}\label{ppeq}
p^*(k) + d_2(P) = \hat{p}^*(\tilde{k}).
\end{equation}
Let $\pi^*(\xi) \in H^1(X,\mathbb{Z}_2)$ be the grading class of $\mathcal{G}$, that is $(\pi^*(\xi) , \pi^*(k)) \in H^1(X , \mathbb{Z}_2) \times H^2(X, \mathcal{C}_U)$ is the graded Dixmier-Douady class of $\mathcal{G}$, where $\xi \in H^1(M,\mathbb{Z}_2)$. The pair $(\hat{\pi}^*(\xi) , \hat{\pi}^*(\tilde{k})) \in H^1(\hat{X} , \mathbb{Z}_2) \times H^2(\hat{X}, \mathcal{C}_U)$ determine up to isomorphism a graded gerbe $\tilde{\mathcal{G}}$ on $\hat{X}$. Let $V \to M$ be the vertical bundle of $X \to M$ and $L(V)$ the lifting gerbe. Define $\hat{\mathcal{G}} = \tilde{\mathcal{G}} \otimes L(V)^*$. From the discussion following Proposition \ref{cpp} we have established that Equation (\ref{ppeq}) is sufficient for the existence of an isomorphism $\gamma : p^*(\mathcal{G}) \to \hat{p}^*( \tilde{\mathcal{G}}) = \hat{p}^*(\hat{\mathcal{G}}) \otimes q^*(L(V))$ satisfying the Poincar\'e property. We have therefore constructed a T-dual pair $(\hat{X},\hat{\mathcal{G}})$.\\

We now prove necessity of the conditions on the graded Dixmier-Douady class of $\mathcal{G}$. Thus let $(X,\mathcal{G})$ be a pair that admits a T-dual $(\hat{X},\hat{\mathcal{G}})$, let $(\chi , h) \in H^1(X,\mathbb{Z}_2) \times H^2(X,\mathcal{C}_U)$ and $(\hat{\chi},\hat{h}) \in H^1(\hat{X},\mathbb{Z}_2) \times H^2(\hat{X},\mathcal{C}_U)$ be the graded Dixmier-Douady classes of $\mathcal{G},\hat{\mathcal{G}}$. In addition let $\tilde{\mathcal{G}} = \hat{\mathcal{G}} \otimes \hat{\pi}^*(L(V))$ and let $(\tilde{\chi},\tilde{h})$ be the corresponding graded Dixmier-Douady class. Letting $w_1$ be the first Stiefel-Whitney class of $V$ and $W_3$ the third integral Stiefel-Whitney class, we in fact have $\tilde{\chi} = \hat{\chi} + \hat{\pi}^*(w_1)$ and $\tilde{h} = \hat{h} + \hat{\pi}^*(W_3) + \beta( \hat{\chi} \smallsmile \hat{\pi}^*(w_1))$, where $\beta : H^2(\hat{X},\mathbb{Z}_2) \to H^3(\hat{X},\mathbb{Z})$ is the Bockstein homomorphism and we have identified $H^3(\hat{X},\mathbb{Z})$ with $H^2(\hat{X},\mathcal{C}_U)$.\\

By the definition of T-duality we know that $p^*(\mathcal{G})$ and $\hat{p}^*(\tilde{\mathcal{G}})$ are isomorphic. In particular $p^*(\chi) = \hat{p}^*(\tilde{\chi})$. By considering the Leray spectral sequences for the constant sheaf $\mathbb{Z}_2$ we deduce that there exists a class $\xi \in H^1(M,\mathbb{Z}_2)$ such that $\chi = \pi^*(\xi)$ and $\tilde{\chi} = \hat{\pi}^*(\xi)$. Note that this also implies $\hat{\chi} = \hat{\pi}^*(\hat{\xi})$, where $\hat{\xi} = \xi + w_1$. Next we consider the implications of the relation $p^*(h) = \hat{p}^*(\tilde{h})$. Since $(X,\mathcal{G}),(\hat{X},\hat{\mathcal{G}})$ are T-dual we are implicitly assuming $\mathcal{G},\hat{\mathcal{G}}$ are trivial on the fibers. In terms of filtrations in cohomology this implies $p^*(h),\hat{p}^*(\tilde{h}) \in F^{1,2}(q,\mathcal{C}_U)$. Let $x,\tilde{x}$ denote the projections of $p^*(h),\hat{p}^*(\tilde{h})$ to $E_\infty^{1,1}(q,\mathcal{C}_U)$. Note that $E_\infty^{1,1}(q,\mathcal{C}_U)$ is a subgroup of $E_2^{1,1}(q,\mathcal{C}_U) \simeq H^1(M , \wedge^2 ( \Lambda^* \oplus \hat{\Lambda}^* ) )$. But it is clear that $x$ lies in the subgroup $H^1(M, \wedge^2 ( \Lambda^*) )\simeq E_2^{1,1}(\pi,\mathcal{C}_U)$ and $\tilde{x}$ in the subgroup $H^1(M, \wedge^2 ( \hat{\Lambda}^*)) \simeq E_2^{1,1}(\hat{\pi},\mathcal{C}_U)$, so the equality $x = \tilde{x}$ is only possible if $x = \tilde{x} = 0$. This also implies that $h \in F^{2,2}(\pi,\mathcal{C}_U) = E_\infty^{2,0}(\pi,\mathcal{C}_U)$, which is a quotient of $E_2^{2,0}(\pi,\mathcal{C}_U) = H^2(M , \pi_* \mathcal{C}_U )$. Thus $h$ lies in the image of $\pi^* : H^2(M , \pi_* \mathcal{C}_U ) \to H^2(X, \mathcal{C}_U)$.
\end{proof}

With a little more work we can deduce from the proof of Theorem \ref{exist} some topological consequences of the T-duality relation.
\begin{proposition}\label{toprelations}
Let $(X,\mathcal{G}),(\hat{X},\mathcal{G})$ be T-dual pairs. The graded Dixmier-Douady classes of $\mathcal{G},\hat{\mathcal{G}}$ have the form $[\mathcal{G}] = ( \pi^*(\xi) , h ) \in H^1(X,\mathbb{Z}_2) \times H^3(X,\mathbb{Z})$, $[\hat{\mathcal{G}}] = (\hat{\pi}^*(\hat{\xi}) , \hat{h}) \in H^1(\hat{X},\mathbb{Z}_2) \times H^3(\hat{X},\mathbb{Z})$, where $\xi,\hat{\xi} \in H^1(M,\mathbb{Z}_2)$, $h \in F^{2,3}(\pi,\mathbb{Z}) \subseteq H^3(X,\mathbb{Z})$ and $\hat{h} \in F^{2,3}(\hat{\pi},\mathbb{Z}) \subseteq H^3(\hat{X},\mathbb{Z})$. Let $c \in H^2(M,\Lambda)$, $\hat{c} \in H^2(M,\Lambda^*)$ be the twisted Chern classes of $X,\hat{X}$. We then have
\begin{eqnarray}
\hat{\xi} &=& \xi + w_1, \label{tc1} \\
p^*(h) &=& \hat{p}^*(\hat{h}) + q^*( W_3 + \beta( \xi \smallsmile w_1)), \label{tc2} \\
c \smallsmile \hat{c} &=& 0, \label{tc3}
\end{eqnarray}
where $w_1,W_3$ are the first Stiefel-Whitney class and third integral Stiefel-Whitney class of the vertical bundle $V$ of $X$ and $\beta : H^2(\hat{X},\mathbb{Z}_2) \to H^3(\hat{X},\mathbb{Z})$ is the Bockstein homomorphism.

Since $c \smallsmile \hat{c} = 0$ we find that $c \in E_2^{2,1}(\hat{\pi},\mathbb{Z})$, $\hat{c} \in E_2^{2,1}(\pi,\mathbb{Z})$ project to classes $[\hat{c}] \in E_\infty^{2,1}(\pi,\mathbb{Z})$, $[c] \in E_\infty^{2,1}(\hat{\pi},\mathbb{Z})$. On the other hand let $[h] \in E_\infty^{2,1}(\pi,\mathbb{Z})$, $[\hat{h}] \in E_\infty^{2,1}(\hat{\pi},\mathbb{Z})$ be the images of $h,\hat{h}$ under the projections $F^{2,3}(\pi,\mathbb{Z}) \to E_2^{2,1}(\pi,\mathbb{Z})$, $F^{2,3}(\hat{\pi},\mathbb{Z}) \to E_2^{2,1}(\hat{\pi},\mathbb{Z})$. We have:
\begin{eqnarray*}
\left[ h \right] &=& \left[ \hat{c} \right] , \\
\left[ \right.  \! \hat{h}  \left. \! \right] &=& \left[ c\right].
\end{eqnarray*}
\end{proposition}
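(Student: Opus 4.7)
The proof has two distinct ingredients. The relations (tc1) and (tc2) are essentially algebraic consequences of the T-duality isomorphism at the level of graded Dixmier-Douady classes, while the filtration claim, (tc3), and the two projection identifications require spectral sequence tracking combined with the Poincar\'e property.

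First I would unpack the isomorphism (T2), $p^*(\mathcal{G}) \simeq \hat{p}^*(\hat{\mathcal{G}}) \otimes q^*(L(V))$, into an equation of graded Dixmier-Douady classes on $C$. Using the group law $(a,h)(b,k) = (a+b, h+k+\beta(a\smallsmile b))$ together with $[L(V)] = (w_1, W_3)$, this produces two component equations on $C$. A Leray spectral sequence argument for $\pi$ and $\hat{\pi}$ with constant coefficients $\mathbb{Z}_2$, carried out exactly as in the necessity half of the proof of Theorem \ref{exist}, forces the grading classes to be pullbacks from $M$, so $\chi = \pi^*(\xi)$ and $\hat{\chi} = \hat{\pi}^*(\hat{\xi})$; matching grading components on $C$ then gives (tc1), and substituting back into the degree-$3$ component yields (tc2).

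Next, the statement $h \in F^{2,3}(\pi,\mathbb{Z})$ (and likewise for $\hat{h}$) is already established in the necessity half of the proof of Theorem \ref{exist}: the projection of $p^*(h)$ to $E_\infty^{1,1}(q,\mathcal{C}_U)$ must vanish because it lies in the summand $H^1(M,\wedge^2 \Lambda^*)$ while $\hat{p}^*(\tilde{h})$ lies in $H^1(M,\wedge^2 \hat{\Lambda}^*)$, the two intersecting trivially. Hence $h \in F^{2,2}(\pi,\mathcal{C}_U)$, and Theorem \ref{cobfilt} promotes this to $h \in F^{2,3}(\pi,\mathbb{Z})$ under the coboundary of the exponential sequence.

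The core of the proof is the identification $[h] = [\hat{c}]$ (and symmetrically $[\hat{h}] = [c]$). Here I would invoke Proposition \ref{cpp} together with the Poincar\'e property to promote the T-duality isomorphism into the cocycle relation $p^*(k) + d_2(P) = \hat{p}^*(\tilde{k})$ in $H^2(M, q_*\mathcal{C}_U)$, where $k,\tilde{k}$ are \v{C}ech preimages of $h,\hat{h}$ and $P \in H^0(M,R^1q_*\mathcal{C}_U)$ is the global section of fibrewise Poincar\'e classes. Apply the coboundary $\delta$ from the exponential sequence to both sides. The Poincar\'e property, via $\delta_2(P) \in H^0(M,\Lambda^* \otimes \hat{\Lambda}^*) \subset H^0(M,R^2 q_* \mathbb{Z})$, together with Theorem \ref{atblsd} gives $d_2 \delta_2(P) = (-\hat{c},c)$ in $H^2(M,\Lambda^*) \oplus H^2(M,\hat{\Lambda}^*)$, so comparing with $\delta p^*(k) - \delta \hat{p}^*(\tilde{k})$ in the same group yields simultaneously $\delta(k) = \hat{c}$ and $\delta(\tilde{k}) = c$. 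Under the identification of Theorem \ref{cobfilt}, $\delta(k) \in H^2(M,\Lambda^*) = E_2^{2,1}(\pi,\mathbb{Z})$ is precisely the representative of the projection $[h] \in E_\infty^{2,1}(\pi,\mathbb{Z})$, hence $[h] = [\hat{c}]$; the argument for $[\hat{h}] = [c]$ is entirely symmetric.

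Finally (tc3) is immediate: since $[\hat{c}] = [h]$ lies in $E_\infty^{2,1}(\pi,\mathbb{Z})$, it is annihilated by all further differentials, in particular by $d_2$, which acts by cup product with $c$ by Theorem \ref{atblsd}, giving $c \smallsmile \hat{c} = 0$. The main obstacle is the third paragraph: one has to carefully transport the Poincar\'e-property statement about fibrewise line bundles through the coboundary of the exponential sequence and across three different Leray spectral sequences (for $\pi$, $\hat{\pi}$ and $q$) simultaneously. The remaining steps are bookkeeping once the framework from the proof of Theorem \ref{exist} is available.
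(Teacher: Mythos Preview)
Your proposal is correct and follows essentially the same route as the paper: invoke the necessity half of Theorem \ref{exist} for (\ref{tc1}), (\ref{tc2}) and the filtration statement; reverse Proposition \ref{cpp} to obtain $p^*(k) + d_2(P) = \hat{p}^*(\tilde{k})$; apply the coboundary $\delta_2$ and use $d_2\delta_2(P) = (-\hat{c},c)$ to read off $\delta(k) = \hat{c}$, $\delta(\hat{k}) = c$; then use Theorem \ref{cobfilt} to identify these with $[h],[\hat{h}]$. One small point: you write that $\tilde{k}$ is a preimage of $\hat{h}$, but it is actually a preimage of $\tilde{h} = \hat{h} + \hat{\pi}^*(W_3 + \beta(\xi\smallsmile w_1))$; since the correction term is pulled back from $M$ it lies in the kernel of $\delta$, so $\delta(\tilde{k}) = \delta(\hat{k})$ and your conclusion stands. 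Your derivation of (\ref{tc3}) via survival of $\hat{c}$ in the spectral sequence is equivalent to the paper's, which instead reads it off directly from the exact sequence of Proposition \ref{longex1thm}.
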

\begin{proof}
In the proof of Theorem \ref{exist} we already established the relations (\ref{tc1}),(\ref{tc2}). Let $\tilde{h} = \hat{h} + \hat{\pi}^*( W_3 + \beta( \xi \smallsmile w_1))$. Choose lifts $k,\hat{k}$ for $h,\hat{h}$ to classes $k \in H^2(M,\pi_* \mathcal{C}_U)$, $\hat{k} \in H^2(M,\hat{\pi}_* \mathcal{C}_U)$. Let $\tilde{k} = k + \hat{\pi}^*( W_3 + \beta( \xi \smallsmile w_1))$, where we are using the map $\hat{\pi}^* : H^3(M,\mathbb{Z}) \simeq H^2(M,\mathcal{C}_U) \to H^2(M , \hat{\pi}_* \mathcal{C}_U)$. The proof of Proposition \ref{cpp} and the discussion directly after can be reversed, that is to say that since $(X,\mathcal{G}),(\hat{X},\hat{\mathcal{G}})$ are T-dual it is possible to choose the lifts $k,\hat{k}$ so that
\begin{equation}\label{kpkt}
p^*(k) + d_2(P) = \hat{p}^*(\tilde{k}),
\end{equation}
where $P \in E_2^{0,1}(q,\mathcal{C}_U) = H^0(M,R^1 q_* \mathcal{C}_U)$ represents the Poincar\'e line bundle on the fibers of $C \to M$. Now if we apply the coboundary map $\delta_2 : E_2^{2,0}(q,\mathcal{C}_U) \to E_2^{2,1}(q,\mathbb{Z}) = H^2(M,\Lambda^*) \oplus H^2(M,\Lambda)$ as defined in Theorem \ref{cobfilt} to Equation (\ref{kpkt}), we get $(\delta_2(k),0) + (-\hat{c},c) = (0,\delta_2(\tilde{k})) = (0,\delta_2(\hat{k}))$. We have also used the identity $\delta_2 d_2(P) = d_2 \delta_2 P = (-\hat{c},c)$ which was established in the proof of Theorem \ref{exist}. It follows immediately that
\begin{eqnarray*}
\delta_2(k) &=& \hat{c}, \\
\delta_2(\hat{k}) &=& c.
\end{eqnarray*}
Now using the exact sequence (\ref{longex1}) applied to $k$, we immediately find $\hat{c} \smallsmile c = 0$, that is Equation (\ref{tc3}). Finally we note that $k$ is a representative in $E_2^{2,0}(\pi,\mathcal{C}_U)$ for $h \in F^{2,2}(\pi,\mathcal{C}_U) = E_\infty^{2,0}(\pi,\mathcal{C}_U)$. By Theorem \ref{cobfilt} it follows that $\delta_2(k) \in E_2^{2,1}(\pi,\mathbb{Z})$ is a representative for the image of $h$ in $E_\infty^{2,1}(\pi,\mathbb{Z})$, that is $\delta_2(k) = \hat{c}$ and $h$ represent the same class in $E_\infty^{2,1}(\pi,\mathbb{Z})$. So $[\hat{c}] = [h]$. Similarly $[c] = [\hat{h}]$.
\end{proof}


\section{Differential T-duality}\label{dtd}


\subsection{Twisted cohomology}\label{twco}

There are two types of twists of de Rham cohomology we wish to consider. If $M$ is a smooth manifold and $H \in \Omega^3(M)$ is a closed $3$-form we define a twisted differential $d_H : \Omega^*(M) \to \Omega^*(M)$ by the formula $d_H \omega = d\omega + H \wedge \omega$. We consider $\Omega^*(M)$ to be $\mathbb{Z}_2$-graded using the mod-$2$ degree of forms and then $d_H$ has odd degree. Thus we obtain a $\mathbb{Z}_2$-graded cohomology $H^*(M,H)$ which is usually called the {\em $H$-twisted cohomology} of $M$. 

If $H$ and $H'$ determine the same cohomology class in $H^3(M,\mathbb{R})$ then the twisted cohomologies are isomorphic. Indeed if we choose a $2$-form $B$ such that $H' = H + dB$ then one sees that $d_{H'} \circ e^{-B} = e^{-B} \circ d_H$ where $e^B : \Omega^*(M) \to \Omega^*(M)$ is the map $e^B \omega = \omega + B \wedge \omega + \frac{1}{2} B \wedge B \wedge \omega + \dots $. This gives an induced map $e^{-B} : H^*(M,H) \to H^*(M,H')$ which is an isomorphism. Note however that the isomorphism $e^{-B} : H^*(M,H) \to H^*(M,H')$ depends on the choice of $2$-form $B$. In general we do not have a {\em canonical} isomorphism between twisted cohomology for different representatives $H,H'$ of the same cohomology class.\\

The second kind of twist we wish to define involves twisting by de Rham complex by a bundle $V$ with flat connection $\nabla$. Thus we take the complex $\Omega^*(M,V) = \Gamma( V \otimes \wedge^* T^*M)$ of $V$-valued forms equipped with the differential $d_\nabla$ which is characterized by the property
\begin{equation*}
d_\nabla ( v \otimes \omega ) = \nabla(v) \wedge \omega + v \otimes d\omega
\end{equation*}
for any section $v$ of $V$ and differential form $\omega$. The resulting cohomology $H^*(M,V)$ is easily shown to coincide with the sheaf cohomology of the local system given by the sheaf of $\nabla$-constant sections of $V$.\\

We observe that the two twists described above can be combined without difficulty. Thus given a closed $3$-form $H \in \Omega^3(M)$ and flat vector bundle $(V,\nabla)$, we define a differential $d_{\nabla,H}$ on the $\mathbb{Z}_2$-graded complex $\Omega^*(M,V)$ of $V$-valued differential forms which is given by
\begin{equation*}
d_{\nabla,H} \omega = d_\nabla \omega + H \wedge \omega.
\end{equation*}
The resulting $\mathbb{Z}_2$-graded cohomology will be denoted $H^*(M,(V,H))$. Note that as before if $H,H'$ are closed $3$-forms representing the same cohomology class and $V,V'$ are isomorphic flat vector bundles, then the twisted cohomologies $H^*(M,(V,H))$, $H^*(M,(V,H'))$ are isomorphic (non-canonically).

Observe that the differential $d_{\nabla,H}$ preserves a filtration on $\Omega^*(M,V)$ by form degree, that is if we let $F^p = \Omega^{\ge p}(M,V)$ be the space of $V$-valued forms of degree $\ge p$ then $d_{\nabla , H} F^p \subseteq F^{p+1}$. This determines a singly graded spectral sequence $\{ (E_r^p,d_r) \}_{r=1}^{\infty}$ which abuts to $H^*(M,(V,H))$. One easily sees that $E_3^p = H^p(M,V)$ is degree $p$ cohomology twisted by $V$ and the differential $d_3$ is the product $d_3 \omega = H \wedge \omega$. There are higher order differentials that can expressed using Massey products.


\subsection{Invariant forms and invariant cohomology}\label{invtc}

Let $\pi : X \to M$ be a rank $n$ affine torus bundle over $M$. A choice a affine structure\footnote{Recall that by Theorem \ref{pab} the affine structure on $X$ is unique up to isomorphism.} on $X$ is given by specifying a choice of principal ${\rm Aff}(T^n)$-bundle $f : P \to M$ and an identification of $X$ with the quotient $X \simeq P / {\rm GL}(n,\mathbb{Z})$. Let $p : P \to X$ be the quotient, so $f = \pi \circ p$. Also note that this means $P$ is a principal ${\rm GL}(n,\mathbb{Z})$-bundle over $X$. Now while $X$ itself is generally not a principal bundle the affine structure allows us to speak of invariant forms on $X$.

\begin{definition}
A differential form $\omega \in \Omega^*(X)$ is said to be {\em invariant} if $p^*(\omega) \in \Omega^*(P)$ is invariant under the ${\rm Aff}(T^n)$-action.
\end{definition}

Note that a form $\beta \in \Omega^*(P)$ is invariant under the ${\rm GL}(n,\mathbb{Z})$ subgroup if and only if it is a pullback $\beta = p^*(\omega)$ of a form $\omega \in \Omega^*(X)$. The form $\omega$ is necessarily unique. In particular if $\beta \in \Omega^*(P)$ is invariant under the full affine group ${\rm Aff}(T^n)$, then $\beta = p^*(\omega)$ is the pullback of a unique form $\omega \in \Omega^*(X)$ and in this case $\omega$ is an invariant form on $X$. So invariant forms on $X$ are in bijection with ${\rm Aff}(T^n)$-invariant forms on $P$.\\

Suppose that $V$ is a vector bundle on $M$ with flat connection. We can extend the definition of invariance to $\pi^*(V)$-valued forms. Indeed the bundle $f^*(V)$ on $P$ admits a natural action of the structure group ${\rm Aff}(T^n)$. Thus a form $\omega \in \Omega^*(X, \pi^*(V))$ will be called invariant if $p^*(\omega) \in \Omega^*(P,f^*(V))$ is ${\rm Aff}(T^n)$-invariant. Moreover the action of ${\rm Aff}(T^n)$ preserves the pullback connection on $f^*(V)$ so the twisted differential $d_{\pi^*(\nabla)} : \Omega^*(X,\pi^*(V)) \to \Omega^*(X,\pi^*(V))$ preserves the subspace $\Omega^*_{{\rm inv}}(X,\pi^*(V))$ of invariant forms. More generally given an invariant $3$-form $H \in \Omega^3(X)$ we see that the twisted differential $d_{\pi^*(\nabla),H}$ preserves the space of invariant forms. Let $H^*_{{\rm inv}}(X,(\pi^*(V),H))$ denote the associated cohomology. The inclusion $i : \Omega^*_{{\rm inv}}(X,\pi^*(V)) \to \Omega^*(X,\pi^*(V))$ induces a map $i_* : H^*_{{\rm inv}}(X,(\pi^*(V),H)) \to H^*(X,(\pi^*(V),H))$.\\

The main result of this section is the following:

\begin{proposition}\label{iso}
Let $\pi : X \to M$ be an affine torus bundle, $A$ a flat vector bundle on $M$ and $H$ an invariant $3$-form on $X$. The map $i_* : H^*_{{\rm inv}}(X,(\pi^*(A),H)) \to H^*(X,(\pi^*(A),H))$ is an isomorphism.
\end{proposition}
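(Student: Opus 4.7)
The plan is a standard averaging argument for compact torus actions, lifted to the total space $P$ of the principal ${\rm Aff}(T^n)$-bundle underlying the affine structure, combined with a chain homotopy coming from the principal $T^n$-bundle structure $P \to P/T^n$.

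First I would set up the lift. Let $f:P\to M$ be the principal ${\rm Aff}(T^n)$-bundle and $p:P\to X$ the ${\rm GL}(n,\mathbb{Z})$-quotient. Pullback by $p$ identifies $\Omega^*(X,\pi^*A)$ with the ${\rm GL}(n,\mathbb{Z})$-invariant subcomplex of $\Omega^*(P,f^*A)$, and $\Omega^*_{{\rm inv}}(X,\pi^*A)$ with the subcomplex of fully ${\rm Aff}(T^n)$-invariant forms. Because $f^*\nabla$ and $p^*H$ are both ${\rm Aff}(T^n)$-invariant, the twisted differential $d_{f^*\nabla,p^*H}$ is ${\rm Aff}(T^n)$-equivariant. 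The compact subgroup $T^n\subset{\rm Aff}(T^n)$ then acts freely on $P$ and averaging over this action,
$$\mathrm{Av}(\omega):=\int_{T^n}R_t^*\omega\,d\mu(t),$$
is a chain map for $d_{f^*\nabla,p^*H}$. Since ${\rm GL}(n,\mathbb{Z})$ acts on $T^n$ by group automorphisms and thus preserves Haar measure, $\mathrm{Av}$ sends ${\rm GL}(n,\mathbb{Z})$-invariants to ${\rm Aff}(T^n)$-invariants. Descending, $\mathrm{Av}:\Omega^*(X,\pi^*A)\to\Omega^*_{{\rm inv}}(X,\pi^*A)$ is a chain retraction of $i$, giving injectivity of $i_*$ immediately.

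For surjectivity one needs a chain homotopy $K$ with $d_{\pi^*\nabla,H}K+Kd_{\pi^*\nabla,H}=\mathrm{id}-i\circ\mathrm{Av}$. Viewing $P\to P/T^n$ as a principal $T^n$-bundle, decompose forms on $P$ by $T^n$-character, so that $\mathrm{Av}$ is precisely projection onto the weight-zero summand. Fix generators $X_1,\dots,X_n$ of the torus action. On each weight-$\lambda \ne 0$ summand the Lie derivative $\mathcal{L}_{X_i}$ acts as the scalar $2\pi i\lambda_i$, so in the untwisted case Cartan's formula $d\iota_X+\iota_Xd=\mathcal{L}_X$ produces the standard chain homotopy $K_0=(2\pi i\lambda_i)^{-1}\iota_{X_i}$ on the subspace where $\lambda_i\ne 0$.

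The main obstacle is the modified Cartan identity
$$d_H\iota_X+\iota_Xd_H=\mathcal{L}_X+(\iota_XH)\wedge,$$
which introduces the extra term $(\iota_XH)\wedge$ and so prevents $K_0$ from being a chain homotopy in the $H$-twisted complex. Since $H$ is ${\rm Aff}(T^n)$-invariant, $\iota_{X_i}H$ is itself weight-zero and closed, so the correction preserves the weight decomposition and raises form degree by $2$. The plan is to correct $K_0$ by a perturbative expansion $K=\sum_{j\ge 0}K_j$, with each $K_{j+1}$ built to cancel the error produced by $K_j$ via the $(\iota_{X_i}H)\wedge$ term; the series terminates because each $K_j$ raises form degree by $2j$ and forms on $X$ are bounded in degree by $\dim X$. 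Should the perturbative bookkeeping prove awkward, an alternative is to filter both complexes by horizontal degree via an invariant connection on $X\to M$ (obtained by descending any connection on $P\to M$) and compare spectral sequences: the $E_0$-differential is the fiberwise operator $d_v+H_{(0,3)}\wedge$, and in each nonzero-weight summand over a fiber $T_m$ the form-degree associated graded is the Koszul complex for the nonzero $1$-form $2\pi i\lambda^\flat$, hence acyclic. The inclusion of invariant forms is therefore a quasi-isomorphism at $E_1$, and the comparison theorem for convergent bounded spectral sequences yields the desired isomorphism on the abutment.
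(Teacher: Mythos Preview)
Your averaging argument is correct and gives injectivity cleanly: since ${\rm GL}(n,\mathbb{Z})$ acts on $T^n$ by automorphisms it preserves Haar measure, so $\mathrm{Av}$ is ${\rm GL}(n,\mathbb{Z})$-equivariant and descends to a chain retraction on $X$. This is a nice shortcut the paper does not use.

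The chain homotopy, however, has a genuine gap. You construct $K_0$ on $\Omega^*(P,f^*A)$ weight by weight, picking on each weight-$\lambda$ summand some index $i$ with $\lambda_i\neq 0$ and setting $K_0=(2\pi i\lambda_i)^{-1}\iota_{X_i}$. This operator is \emph{not} ${\rm GL}(n,\mathbb{Z})$-equivariant: the action of $g\in{\rm GL}(n,\mathbb{Z})$ permutes the weight spaces (sending weight $\lambda$ to weight $g^{-T}\lambda$), and no choice of index $i$, nor any inner-product-based formula like $|\lambda|^{-2}\sum_j\lambda_j\iota_{X_j}$, is stable under this action because ${\rm GL}(n,\mathbb{Z})$ preserves no inner product on $\mathbb{R}^n$. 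Consequently $K_0$ does not restrict to the ${\rm GL}(n,\mathbb{Z})$-invariant subcomplex $\Omega^*(X,\pi^*A)\subset\Omega^*(P,f^*A)$, and since ${\rm GL}(n,\mathbb{Z})$ is infinite discrete you cannot average $K_0$ to repair this. The same objection applies to the perturbative corrections $K_j$. On $X$ itself there is no global $T^n$-action once the monodromy is nontrivial, only the fibrewise action of the bundle of groups $T^n_\Lambda$, so the homotopy cannot be built directly on $X$ by this route either.

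Your fallback is essentially what the paper does, though organized differently. The paper first treats the case $H=0$ by filtering both $\Omega^*(X,\pi^*A)$ and $\Omega^*_{\rm inv}(X,\pi^*A)$ by the Leray--Serre filtration $F^p\Omega^n$ (forms vanishing on $n{+}1{-}p$ vertical vectors) and observing that both $E_1$-pages are $\Omega^p(M,A\otimes\wedge^q V^*)$. Then for general invariant $H$ it filters by total form degree $\Omega^{\ge p}$; the early page of this second spectral sequence is the untwisted cohomology just computed, so the comparison theorem finishes. Your version (filter by horizontal degree using an invariant connection and argue that nonzero-weight contributions are fibrewise acyclic) would also work, since the acyclicity claim is local over $M$ and can be checked on a trivializing chart where the weight decomposition \emph{is} available; but the paper's two-step filtration avoids having to mention weights at all and sidesteps the equivariance issue entirely.
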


As an immediate corollary we find
\begin{corollary}\label{inv3}
For any class $h \in H^3(X,\mathbb{R})$ there is a representative $H \in \Omega^3(X)$ which is invariant.
\end{corollary}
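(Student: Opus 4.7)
The plan is to derive this as an immediate specialization of Proposition~\ref{iso} to the most trivial possible twisting data. I would take $A = \mathbb{R}$, the trivial line bundle on $M$ equipped with its flat connection $\nabla = d$, and take $H = 0$, which is vacuously an invariant $3$-form on $X$ (its pullback to $P$ is $0$, which is ${\rm Aff}(T^n)$-invariant). With these choices, $\pi^*(A) = \mathbb{R}$ on $X$ and the twisted differential $d_{\pi^*(\nabla),H}$ coincides with the ordinary de Rham differential $d$.

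Under these identifications, $H^*(X,(\pi^*(A),H))$ is the ordinary de Rham cohomology $H^*(X,\mathbb{R})$, while $H^*_{{\rm inv}}(X,(\pi^*(A),H))$ is the invariant de Rham cohomology $H^*_{{\rm inv}}(X)$ defined from the subcomplex $\Omega^*_{{\rm inv}}(X) \subseteq \Omega^*(X)$. Proposition~\ref{iso} then says that $i_* : H^3_{{\rm inv}}(X) \to H^3(X,\mathbb{R})$ is an isomorphism, so in particular it is surjective. Given any class $h \in H^3(X,\mathbb{R})$, I can therefore choose an invariant closed $3$-form $H \in \Omega^3_{{\rm inv}}(X)$ that represents $h$, which is exactly the conclusion.

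Since the corollary is a direct quotation of the surjectivity half of Proposition~\ref{iso} applied to trivial data, there is no real obstacle: all of the substantive work has already been absorbed into the proposition. The only thing worth checking is the trivial observation that $H = 0$ is a legitimate invariant $3$-form, so that Proposition~\ref{iso} genuinely applies.
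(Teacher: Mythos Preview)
Your proposal is correct and matches the paper's approach: the paper states Corollary~\ref{inv3} as an immediate consequence of Proposition~\ref{iso}, and you have simply written out the specialization to $A = \mathbb{R}$, $H = 0$ explicitly. The only point worth noting is that Proposition~\ref{iso} is stated for $\mathbb{Z}_2$-graded cohomology, but with $H = 0$ the differential preserves the integer grading, so the isomorphism $i_*$ automatically holds degree by degree (and indeed the first half of the proof of Proposition~\ref{iso} establishes exactly this $\mathbb{Z}$-graded statement).
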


\begin{proof}[Proof of Proposition \ref{iso}]
First we prove the result when $H=0$. Recall that for a fiber bundle $\pi : X \to M$ there is a natural filtration on differential forms, namely $F^p \Omega^n(X) = \{ \omega \in \Omega^n(X) \; | \; i_{V_1} i_{V_2} \dots i_{V_{n+1-p}} \omega = 0, \; \pi_*(V_1) = \dots = \pi_*(V_{n+1-p}) = 0 \}$. That is, $F^p \Omega^n(X)$ consists of $n$-forms that vanish when contracted with $(n+1-p)$ vertical vector fields. Then since $d( F^p \Omega^n(X)) \subseteq F^p \Omega^{n+1}$, there is an associated spectral sequence $(E_r^{p,q},d_r)$ which abuts to the cohomology $H^*(X,\mathbb{R})$. In fact, this spectral sequence is just the Leray-Serre spectral sequence with real coefficients \cite{grha}. 

If $(A,\nabla)$ is a flat vector bundle on $M$ then there is a similar filtration $\linebreak F^p \Omega^n(X,\pi^*(A))$ for the $\pi^*(A)$-valued forms on $X$ and again we get a spectral sequence $(E_r^{p,q},d_r)$ which abuts to the cohomology $H^*(X,\pi^*(A))$. We can think of this as the Leray-Serre spectral sequence with local coefficients.

Next we observe that there is a similar filtration $F^p \Omega^n_{{\rm inv}}(X,\pi^*(A))$ on the invariant forms, also preserved by the differential $d_\nabla$, so there is a similar spectral sequence $E_{r,{\rm inv}}^{p,q}$ which abuts to $H^*_{{\rm inv}}(X,\pi^*(A))$. The inclusion $i : \Omega^*_{{\rm inv}}(X,\pi^*(A)) \to \Omega^*(X,\pi^*(A))$ clearly preserves the filtration so induces a morphism $E_{r,{\rm inv}}^{p,q} \to E_r^{p,q}$ of spectral sequences. We claim that already at the $E_1$-stage the inclusion $E_{1,{\rm inv}}^{p,q} \to E_1^{p,q}$ is an isomorphism, so this is true for all $r \ge 1$. This means the inclusion $i_* : H^*_{{\rm inv}}(X,\pi^*(A)) \to H^*(X,\pi^*(A))$ is an isomorphism. Moreover we also see from this that the inclusion isomorphism $i_* : H^*_{{\rm inv}}(X,\pi^*(A)) \to H^*(X,\pi^*(A))$ respects the filtrations on $H^*_{{\rm inv}}(X,\pi^*(A)),H^*(X,\pi^*(A))$ induced by the filtrations on differential forms.\\

Let $V$ denote the vertical bundle of $\pi : X \to M$. In particular the tangent bundle of $X$ fits into a short exact sequence
\begin{equation*}
0 \to \pi^*(V) \to TX \to \pi^*(TM) \to 0.
\end{equation*}
From this one finds that there are natural identifications
\begin{eqnarray*}
F^p \Omega^{p+q}(X,\pi^*(A))/ F^{p+1} \Omega^{p+q}(X,\pi^*(A)) &\simeq& \Gamma( X , \pi^*( A \otimes \wedge^p T^*M \otimes \wedge^q V^*)), \\
F^p \Omega^{p+q}_{{\rm inv}}(X,\pi^*(A))/F^{p+1} \Omega^{p+q}_{{\rm inv}}(X,\pi^*(A)) &\simeq& \Gamma( M , A \otimes \wedge^p T^*M \otimes \wedge^q V^*),
\end{eqnarray*}
where we use $\Gamma(Y,W)$ to denote the global sections of a vector bundle $W \to Y$.

From this we can proceed to compute the $E_1$ stages of the spectral sequences. Essentially one takes cohomology in the vertical direction. Since the cohomology of the fibers identifies with the local systems $\wedge^q V^*$ on $M$ one finds easily that
\begin{equation*}
E_1^{p,q} = E_{1,{\rm inv}}^{p,q} = \Omega^p(M , A \otimes \wedge^q V^*),
\end{equation*}
which proves the claim.\\

Now let $H \in \Omega^3_{{\rm inv}}(X)$ be an invariant $3$-form on $X$. We have a filtration $F'^p = \Omega^{\ge p}(X,\pi^*(A))$ on $\Omega^*(X,\pi^*(A))$ which determines a spectral sequence $\{(E_r^p,d_r)\}_{r=1}^\infty$ described at the end of Section \ref{twco} which abuts to the twisted cohomology groups $H^*(X,(\pi^*(A),H))$. On the other hand we may define a similar filtration $F'^p_{{\rm inv}} = \Omega^{\ge p}_{{\rm inv}}(X,\pi^*(A))$ of the invariant forms on $X$. In the same manner we have a similar spectral sequence $\{ (E_{r,{\rm inv}}^p , d_r ) \}$ which abuts to the invariant twisted cohomology $H^*_{{\rm inv}}(X,(\pi^*(A),H))$. Moreover we see that the $E_2$ stage is given by the invariant cohomology $H^*_{{\rm inv}}(X,\pi^*(A))$ not twisted by $H$. The inclusion $i : \Omega^*_{{\rm inv}}(X,\pi^*(A)) \to \Omega^*(X,\pi^*(A))$ respects the filtrations and thus induces a morphism between spectral sequences. On the $E_2$ stage this is the inclusion $i_* : H^*_{{\rm inv}}(X,\pi^*(A)) \to H^*(X,\pi^*(A))$, which we have just shown to be an isomorphism. It follows immediately that $i_* : H^*_{{\rm inv}}(X,(\pi^*(A),H)) \to H^*(X,(\pi^*(A),H))$ is an isomorphism.
\end{proof}


\subsection{Gerbe connections}\label{grbconn}

In order to link the T-duality relation to a statement involving differential forms on $X,\hat{X}$, we need a way to relate gerbes to differential forms. This is achieved using the notions of gerbe connections, curvings and curvature. Let us review the relevant details, which we have adapted from standard references such as \cite{mm},\cite{stev},\cite{bcmms}.\\

Gerbe connections and curving are useful tools for doing differential geometry with bundle gerbes. Therefore we restrict attention to spaces $X$, which are smooth manifolds. Recall the notion of a quasi-cover $f : Y \to X$ from Section \ref{ggrb}. Here we restrict to quasi-covers such that $Y$ is a smooth manifold and $f$ is a surjective submersion. Note that every surjective submersion admits local sections. If $Y \to X$, $Y' \to X$ are two surjective submersions then the fiber product $Y \times_X Y'$ is a submanifold of $Y \times Y'$ and the projections $Y \times_X Y \to Y$, $Y \times_X Y' \to Y'$ are surjective submersions as well. This also applies to the iterated fiber products $Y^{[k]}$ of $Y$ and the projections $\partial_i : Y^{[k]} \to Y^{[k-1]}$ defined in Section \ref{ggrb}. The whole of Section \ref{ggrb} carries over to the smooth setting, so for instance we now take our bundle gerbes to be smooth, meaning they consist of a smooth line bundle $L \to Y^{[2]}$ and the gerbe multiplication $\theta_{x,y,z} : L_{x,y} \otimes L_{y,z} \to L_{x,z}$ is smooth in $(x,y,z)$.\\

Let $f: Y \to X$ be a surjective submersion. Define maps $\delta : \Omega^p(Y^{[k]}) \to \Omega^p(Y^{[k+1]})$ by
\begin{equation*}
\delta = \sum_{i=0}^k (-1)^i \partial_i^*.
\end{equation*}
In \cite{mm} it is shown that the sequence
\begin{equation}\label{gerbeseq}
0 \to \Omega^p(X) \buildrel f^* \over \longrightarrow \Omega^p(Y^{[1]}) \buildrel \delta^* \over \longrightarrow \Omega^p(Y^{[2]}) \buildrel \delta^* \over \longrightarrow \cdots
\end{equation}
is exact for any $p$.

\begin{definition}[\cite{mm},\cite{stev}]
Let $\mathcal{G} = (Y,f,L,\theta)$ be a bundle gerbe defined with respect to $Y$. A {\em (gerbe) connection} on $\mathcal{G}$ is a unitary connection $\nabla$ on $L$ such that the gerbe product $\theta : \partial^*_2 L \otimes \partial^*_0 \to \partial^*_1 L$ preserves the induced connections.
\end{definition}
Making use of (\ref{gerbeseq}), we find that every gerbe admits a gerbe connection. If $\nabla$ is a connection for $\mathcal{G} = (Y,f,L,\theta)$ then the curvature of $\nabla$ is a closed complex $2$-form $F \in \Omega^2( Y^{[2]}) \otimes \mathbb{C}$. The fact that $\nabla$ preserves $\theta$ ensures that $\delta(F) = 0$. Note also that since $\nabla$ is unitary, $F/2\pi i$ is real.
\begin{definition}[\cite{mm},\cite{stev}]
If $\nabla$ is a gerbe connection for the gerbe $\mathcal{G} = (Y,f,L,\theta)$ and $F \in \Omega^2(Y^{[2]}) \otimes \mathbb{C}$ the curvature, then a {\em curving} for $(\mathcal{G},\nabla)$ is a real $2$-form $B \in \Omega^2(Y)$ such that $2 \pi i \delta(B) = F$.

Note that $\delta(dB) = d \delta(B) = d (F/2 \pi i) = 0$, so by (\ref{gerbeseq}) there exists a unique $3$-form $H \in \Omega^3(X)$ such that $dB = f^*(H)$. We call $H$ the {\em curvature} of the curving $B$.
\end{definition}
Note that for every gerbe $\mathcal{G}$ with connection we can find a curving. Additionally if $H$ is the curvature of such a curving then $H$ is closed and the de Rham cohomology class $[H] \in H^3(X,\mathbb{R})$ is readily seen to be the image in real cohomology of the Dixmier-Douady class of $\mathcal{G}$.\\

Let $\mathcal{G} = (Y,f,L,\theta)$ be a graded gerbe, $\nabla$ a connection on $\mathcal{G}$ and $B$ a curving with curvature $H$. If $B_0 \in \Omega^2(X)$ is any $2$-form on $X$ then $B + f^*(B_0)$ is also a curving. The curvature of $B + f^*(B_0)$ is $H + dB_0$. Such a change in curving will be called a {\em curving shift} by $B_0$ or just a {\em shift} by $B_0$. Note that any two curvings for $(\mathcal{G},\nabla)$ are related by a {\em unique} shift.\\

If $\mathcal{G}$ is a graded gerbe then the definitions of connection, curving and curvature remain unaltered. Much of Section \ref{ggrb} can be adapted to the setting of graded bundle gerbes with connection and curving. Let us list the main points to be adapted:
\begin{itemize}
\item{Just as we have a pullback operation on gerbes, we can just as easily pullback connections and curvings.}
\item{There is a notion of strict isomorphism of gerbes with connections and curvings, namely a strict isomorphism of the underlying gerbes that preserves the connections and curvings.}
\item{Given a graded line bundle $L \to Y$ with connection $\nabla$, the trivializable gerbe $\delta(L)$ has natural choices for a connection and curving. The connection is just the induced connection on $\delta(L) = \partial_1^*(L) \otimes \partial_0^*(L^*)$, while the curving is $-F/2 \pi i$, where $F \in \Omega^1(Y) \otimes \mathbb{C}$ is the curvature of $L$. Notice that the curvature of such a curving is zero.}
\item{There is a natural way to define the product of two gerbes with connections and curvings.}
\end{itemize}

There is also a notion of stable isomorphism for gerbes with connections and curving. One simply adapts Definition \ref{defstabiso} to the present setting. Thus the graded line bundle $M \to Z$ in Definition \ref{defstabiso} now becomes a graded line bundle with connection and the strict isomorphism $\phi : r^*(L) \otimes \delta(M) \to r'^*(L')$ becomes a strict isomorphism of gerbes with connections and curvings.\\

Suppose $(\mathcal{G},\nabla,B)$, $(\mathcal{G}',\nabla',B')$ are two gerbes with connections and curvings. Let $\alpha : \mathcal{G} \to \mathcal{G}'$ be a stable isomorphism of the underlying gerbes. One can easily show that there exists a $2$-form $B_1$ on $X$ such that after shifting $B'$ by $B_1$, the stable isomorphism $\alpha$ can be promoted to a stable isomorphism of gerbes with connections and curvings. In particular if $H,H'$ are the curvatures of $B,B'$, then it follows that $H' = H + dB_1$.

Next suppose $\beta : \mathcal{G} \to \mathcal{G}'$ is a second stable isomorphisms of the underlying gerbes. Then we can find a second $2$-form $B_2 \in \Omega^2(X)$ such that after shifting $B'$ by $B_2$ we can promote $\beta$ to a stable isomorphism of gerbes with connections and curvings. Obviously we also have $H' = H + dB_2$, so in particular $B_2 - B_1$ is closed. Adapting the arguments of Section \ref{ggrb}, we see that after pulling back to a common refinement, the two isomorphisms differ by a graded line bundle with connection and using the descent isomorphism we get a graded line bundle with connection $(D,\nabla)$ on $X$, which up to line bundle isomorphism is independent of choices involved. Let $F_\nabla \in \Omega^2(X) \otimes \mathbb{C}$ be the curvature of this line bundle. Then by comparing curvings we find that $B_2 - B_1 = -F_\nabla /2 \pi i$. In particular, the cohomology class $[B_2 - B_1]$ equals $-c_1(D)$, where $c_1(D)$ is the first Chern class of the line bundle $D$.


\subsection{T-duality triples}\label{sstdt}

With the aid of gerbe connections and curvings, we can now translate topological T-duality into a statement involving differential forms. Let $(X,\mathcal{G}),(\hat{X},\hat{\mathcal{G}})$ be T-dual pairs where now we are in the setting of smooth manifolds and smooth gerbes. We can choose an isomorphism $\gamma : p^*(\mathcal{G}) \to \hat{p}^*( \hat{\mathcal{G}}) \otimes q^*(L(V))$ satisfying the Poincar\'e property.\\

Let $V$ be the vertical bundle of $X$. Recall that this is a flat vector bundle on $M$ such that $\pi^*(V)$ identifies with the vertical tangent bundle ${\rm Ker}(\pi_*)$, which has a flat structure inherited from the affine structure on $X$. A {\em connection} on $X$ is defined to be an invariant $1$-form $A \in \Omega^1_{{\rm inv}}(X,\pi^*(V))$ with values in $\pi^*(V)$, such that the restriction of $A$ to the vertical tangent bundle ${\rm Ker}(\pi_*)$ induces an isomorphism of flat vector bundles $A : {\rm Ker}(\pi_*) \to \pi^*(V)$. Let $P \to M$ be the principal ${\rm Aff}(T^n)$-bundle such that $X = P \times_{{\rm Aff}(T^n)} T^n = P/{\rm GL}(n,\mathbb{Z})$ and let $a : P \to X$ be the projection. It follows that $a^*(A)$ is a connection on $P$, in fact it is straightforward to see that in this manner connections on $X$ are in bijection with connections on $P$. Given a connection $A$ on $X$, there exists a unique $V$-valued $2$-form $F \in \Omega^2(M,V)$ on $M$ such that $d_\nabla A = \pi^*(F)$, where $\nabla$ is the flat connection on $V$. We call $F$ the {\em curvature} of $A$. Recall that the constant sections of $V$ can be identified with the local system $\Lambda \otimes \mathbb{R}$, so the change of coefficient map $\Lambda \to \Lambda \otimes \mathbb{R}$ induces a map $H^2(M,\Lambda) \to H^2(M,V)$. It is straightforward to see that the de Rham cohomology class of $F$ in $H^2(M,V)$ coincides with the image of the twisted Chern class $c \in H^2(M,\Lambda)$ under $H^2(M,\Lambda) \to H^2(M,V)$.\\

Let $A,\hat{A}$ be connections on $X,\hat{X}$. Since $X,\hat{X}$ are T-dual their vertical bundles $V,\hat{V}$ are dual, so we can think of $A \in \Omega^1(X,\pi^*(V))$ and $\hat{A} \in \Omega^1(\hat{X},\hat{\pi}^*(V^*))$. Let $F,\hat{F}$ be the corresponding curvatures, so $F \in \Omega^2(M,V)$, $\hat{F} \in \Omega^2(M, V^*)$. Let $( \; , \; )$ denote the pairing of $V$ and $V^*$ and extend this to a pairing $( \; \buildrel \wedge \over , \; )$ of differential forms valued in $V,V^*$. 

\begin{theorem}\label{tdtthm}
Let $(X,\mathcal{G}),(\hat{X},\hat{\mathcal{G}})$ be T-duals, $h \in H^3(X,\mathbb{Z}),\hat{h} \in H^3(\hat{X},\mathbb{Z})$ the ungraded Dixmier-Douady classes of $\mathcal{G},\hat{\mathcal{G}}$ and $h_\mathbb{R} \in H^3(X,\mathbb{R}),\hat{h}_\mathbb{R} \in H^3(\hat{X},\mathbb{R})$ the images of $h,\hat{h}$ in real cohomology. Let $A,\hat{A}$ be connections on $X,\hat{X}$, with curvatures $F,\hat{F}$. For any choice of $A,\hat{A}$ there exists a $3$-form $H_3 \in \Omega^3(M)$ such that
\begin{equation}\label{tdt1}
dH_3 + (F \buildrel \wedge \over , \hat{F}) = 0
\end{equation}
and such that $h_\mathbb{R},\hat{h}_\mathbb{R}$ are represented in de Rham cohomology by
\begin{eqnarray}
H &=& \pi^*(H_3) + ( A \buildrel \wedge \over , \pi^*(\hat{F})), \label{tdt2} \\
\hat{H} &=& \hat{\pi}^*(H_3) + ( \hat{A} \buildrel \wedge \over , \hat{\pi}^*(F) ). \label{tdt3}
\end{eqnarray}
In addition we have
\begin{equation}\label{hhhatrel}
\hat{p}^*(\hat{H}) - p^*(H) = d \mathcal{B},
\end{equation}
where
\begin{equation}\label{calb}
\mathcal{B} = ( p^*(A) \buildrel \wedge \over , \hat{p}^*(\hat{A}) ).
\end{equation}
\end{theorem}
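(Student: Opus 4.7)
The plan is to translate the T-duality stable isomorphism $\gamma$ into a differential identity by equipping every gerbe involved with a connection and a curving. By Corollary~\ref{inv3} and the discussion of Section~\ref{grbconn}, I first choose invariant gerbe connections and curvings on $\mathcal G$ and $\hat{\mathcal G}$ whose curvatures $H\in\Omega^3_{\rm inv}(X)$ and $\hat H\in\Omega^3_{\rm inv}(\hat X)$ represent $h_\mathbb{R}$ and $\hat h_\mathbb{R}$.

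Next I would exploit the connections $A,\hat A$ to decompose invariant $3$-forms by horizontal/vertical bidegree. On an affine torus bundle with connection $A$, any invariant $3$-form takes (schematically) the form
\[
H=\pi^*(\alpha_3)+(A\buildrel\wedge\over,\pi^*(\alpha_2))+\tfrac{1}{2}(A\wedge A\buildrel\wedge\over,\pi^*(\alpha_1))+\tfrac{1}{6}(A\wedge A\wedge A\buildrel\wedge\over,\alpha_0),
\]
with $\alpha_k$ a $(3{-}k)$-form on $M$ valued in $\wedge^k V^*$. Condition (T1) forces $H|_{T_m}$ to be exact, and the freedom to shift the curving by an arbitrary invariant $2$-form on $X$ (which admits an analogous bidegree decomposition) allows one to successively kill the $\alpha_0$ and $\alpha_1$ pieces. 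This brings $H$ to the form $\pi^*(H_3^X)+(A\buildrel\wedge\over,\pi^*G_2)$ for some $G_2\in\Omega^2(M,V^*)$, and closedness of $H$ yields $d_\nabla G_2=0$ and $dH_3^X+(F\buildrel\wedge\over,G_2)=0$. The identical treatment of $\hat H$ produces $H_3^{\hat X}\in\Omega^3(M)$ and $\hat G_2\in\Omega^2(M,V)$.

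To compare the two decompositions I pull back to the correspondence space. Since $V$ carries a flat connection, $L(V)$ is a torsion graded gerbe and admits a connection and curving with vanishing curvature. By the discussion at the end of Section~\ref{grbconn}, after a curving shift by some $\mathcal B'\in\Omega^2(C)$, the isomorphism $\gamma$ promotes to an isomorphism of gerbes with connection and curving, giving
\[
p^*(H)-\hat p^*(\hat H)=d\mathcal B'.
\]
The Poincar\'e property (T3) now pins down $\mathcal B'$ fibrewise: on each fibre $T_m\times\hat T_m$, $\gamma$ is built from a line bundle whose Chern class is the Poincar\'e class $\delta_m$, and the canonical connection on the local Poincar\'e line bundle has curvature equal, under the duality pairing $V\otimes V^*\to\mathbb R$, to the restriction of $(p^*A\buildrel\wedge\over,\hat p^*\hat A)$. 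Thus $\mathcal B'$ differs from $\mathcal B=(p^*(A)\buildrel\wedge\over,\hat p^*(\hat A))$ by a $2$-form that is fibrewise closed; this ambiguity is absorbed by further adjusting the curvings on $\mathcal G,\hat{\mathcal G}$, which at worst shifts $H_3^X$ and $H_3^{\hat X}$ by exact $3$-forms on $M$.

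Substituting the decompositions of $H,\hat H$ into $p^*(H)-\hat p^*(\hat H)=d\mathcal B$, using $dA=\pi^*F$ and $d\hat A=\hat\pi^*\hat F$ to expand
\[
d\mathcal B=(p^*\pi^*F\buildrel\wedge\over,\hat p^*\hat A)-(p^*A\buildrel\wedge\over,\hat p^*\hat\pi^*\hat F),
\]
and matching horizontal/vertical bidegrees then forces $H_3^X=H_3^{\hat X}=:H_3$, $G_2=\hat F$, and $\hat G_2=F$, proving (\ref{tdt2}), (\ref{tdt3}) and (\ref{hhhatrel}); identity (\ref{tdt1}) is then just $dH=0$. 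I expect the main obstacle to be the Poincar\'e step: identifying the curving shift $\mathcal B'$ that promotes $\gamma$ to an isomorphism of gerbes with curving, and matching it with $(p^*A\buildrel\wedge\over,\hat p^*\hat A)$, requires a careful fibrewise computation using the Poincar\'e property together with tight control of the sign and pairing conventions used throughout.
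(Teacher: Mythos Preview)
Your outline follows the same overall strategy as the paper: normalize the curvatures $H,\hat H$ via the horizontal/vertical decomposition afforded by $A,\hat A$; promote $\gamma$ to an isomorphism of gerbes with connection and curving to obtain a relation $\hat p^*\hat H-p^*H=d\mathcal B'$; use the Poincar\'e property to match $\mathcal B'$ with $\mathcal B$ fibrewise; absorb the discrepancy by curving shifts; and read off the result. The substantive difference is \emph{where} you establish that the bidegree-$(2,1)$ piece of $H$ equals $\hat F$ (your $G_2=\hat F$). The paper does this up front, in its first lemma, by invoking Proposition~\ref{toprelations} (which gives $[h]=[\hat c]$ in $E_\infty^{2,1}(\pi,\mathbb Z)$) and then chasing the spectral sequence to adjust $G_2$ to $\hat F$ by an explicit exact correction. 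You instead try to extract $G_2=\hat F$ from a bidegree comparison on $C$ at the very end.

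This reordering hides a genuine gap in your step~6. Your claim that absorbing $\mathcal B'-\mathcal B$ by curving shifts ``at worst shifts $H_3^X$ and $H_3^{\hat X}$ by exact $3$-forms on $M$'' is not correct: if $G_2\neq\hat F$ a priori, then
\[
d(\mathcal B'-\mathcal B)=\hat p^*\hat H-p^*H-d\mathcal B
= q^*(\hat H_3^{\hat X}-H_3^X)+\hat p^*\bigl(\hat A\buildrel\wedge\over,\hat\pi^*(\hat G_2-F)\bigr)-p^*\bigl(A\buildrel\wedge\over,\pi^*(G_2-\hat F)\bigr),
\]
so the required curving shift on $X$ must satisfy $dB=(A\buildrel\wedge\over,\pi^*(G_2-\hat F))+\pi^*(\cdots)$. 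But $(A\buildrel\wedge\over,\pi^*(G_2-\hat F))$ is exact on $X$ only if $G_2-\hat F$ is $d_\nabla$-exact (or lies in the image of $d_2$) in $\Omega^2(M,V^*)$, and that is precisely the content of $[h]=[\hat c]$ in $E_\infty^{2,1}$ from Proposition~\ref{toprelations}. In other words, the topological input you are trying to avoid is exactly what makes your absorption step possible; without it the shift $B$ need not exist, and you cannot reach $\hat p^*\hat H-p^*H=d\mathcal B$ on the nose to run your final bidegree match. The paper avoids this circularity by securing $G_2=\hat F$ first (via Proposition~\ref{toprelations}), so that $d(D-\mathcal B)=q^*(\hat H_3-H_3)$ is purely basic and Lemma~\ref{absorbdiff} applies directly. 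A secondary imprecision: killing $\alpha_1$ in your step~2--3 is not a consequence of (T1) alone but of $h\in F^{2,3}(\pi,\mathbb Z)$, which again comes from the T-dualizability conditions established in Section~\ref{poe}.
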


If $(X,\mathcal{G}),(\hat{X},\hat{\mathcal{G}})$ are T-duals then a choice of connections $A,\hat{A}$ and $3$-form $H_3$ on $M$ satisfying (\ref{tdt1}),(\ref{tdt2}),(\ref{tdt3}) will be called a {\em T-duality triple} and denoted $(A,\hat{A},H_3)$. The rest of this section is dedicated to the proof of Theorem \ref{tdtthm}.

\begin{lemma}
We may choose de Rham representatives $H \in \Omega^3(X)$, $\hat{H} \in \Omega^3(\hat{X})$ for $h_\mathbb{R},\hat{h}_\mathbb{R}$ such that $H,\hat{H}$ are invariant and
\begin{eqnarray}
H &=& \pi^*(H_3) + (A \buildrel \wedge \over , \pi^*(\hat{F}) ), \label{hstruct1} \\
\hat{H} &=& \hat{\pi}^*(\hat{H}_3) + (\hat{A} \buildrel \wedge \over , \hat{\pi}^*(F) ), \label{hstruct2}
\end{eqnarray}
for some $H_3,\hat{H}_3 \in \Omega^3(M)$.
\end{lemma}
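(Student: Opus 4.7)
The plan is to combine Corollary \ref{inv3} with the filtration information on $h$ supplied by Proposition \ref{toprelations}, then use the connection $A$ to split the resulting invariant form into its horizontal and vertical pieces.

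First, by Proposition \ref{iso} applied to the trivial flat bundle, the inclusion $i_*$ of invariant cohomology into ordinary de Rham cohomology is an isomorphism; as one sees from its proof, this isomorphism also respects the form-degree filtration. By Proposition \ref{toprelations} we have $h \in F^{2,3}(\pi,\mathbb{Z})$, so $h_\mathbb{R}$ lies in $F^{2,3}(\pi,\mathbb{R})$, and therefore admits an invariant representative $H \in F^2 \Omega^3_{{\rm inv}}(X)$, that is, one vanishing when contracted with any two vertical vector fields.

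Using the connection $A$ to split $T^*X \cong \pi^*T^*M \oplus \pi^*V^*$, any such invariant form decomposes uniquely as
$$H = \pi^*(H_3) + (A \buildrel \wedge \over , \pi^*(\omega))$$
for some $H_3 \in \Omega^3(M)$ and $\omega \in \Omega^2(M,V^*)$. Expanding $dH=0$ using $d_\nabla A = \pi^*(F)$ forces $d_\nabla \omega = 0$, so $\omega$ represents a class $[\omega] \in H^2(M,V^*)$. Moreover, under the identification of the filtration quotient $F^2\Omega^3_{{\rm inv}}/F^3\Omega^3_{{\rm inv}}$ with $\Omega^2(M,V^*)$ established in the proof of Proposition \ref{iso}, this class $[\omega]$ is precisely the image of $h_\mathbb{R}$ in $E_\infty^{2,1}(\pi,\mathbb{R})$.

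By the last part of Proposition \ref{toprelations}, $[h] = [\hat{c}]$ in $E_\infty^{2,1}(\pi,\mathbb{Z})$, and under the change of coefficients $\hat{\Lambda} \to \hat{V} = V^*$ the twisted Chern class $\hat{c}$ maps to $[\hat{F}]$. Hence $[\omega] = [\hat{F}]$ in $H^2(M,V^*)$, so $\omega = \hat{F} + d_\nabla \psi$ for some $\psi \in \Omega^1(M,V^*)$. Replacing $H$ by $H - d\bigl( (A \buildrel \wedge \over , \pi^*\psi) \bigr)$ — which is still invariant and still represents $h_\mathbb{R}$ — brings it into the required form $\pi^*(H_3') + (A \buildrel \wedge \over , \pi^*\hat{F})$ for some adjusted $H_3' \in \Omega^3(M)$. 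The symmetric argument applied to $(\hat{X},\hat{\mathcal{G}})$ produces $\hat{H}$ of the corresponding shape. The main subtle point is the translation of the integral filtration statement $[h] = [\hat{c}]$ into the de Rham identity $[\omega] = [\hat{F}]$, via the invariant Leray-Serre description of the form-degree filtration furnished by Proposition \ref{iso}.
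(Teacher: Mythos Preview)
Your argument has a genuine gap at the step where you pass from equality in $E_\infty^{2,1}(\pi,\mathbb{R})$ to equality in $H^2(M,V^*)$. You correctly observe that $[\omega]$ represents the image of $h_{\mathbb{R}}$ in $E_\infty^{2,1}(\pi,\mathbb{R})$, and that Proposition~\ref{toprelations} gives $[h]=[\hat c]$ there, so $[\omega]$ and $[\hat F]$ agree in $E_\infty^{2,1}(\pi,\mathbb{R})$. But $E_\infty^{2,1}(\pi,\mathbb{R}) = E_3^{2,1}(\pi,\mathbb{R})$ is only the \emph{quotient} of $E_2^{2,1}(\pi,\mathbb{R}) = H^2(M,V^*)$ by the image of $d_2 : E_2^{0,2}(\pi,\mathbb{R}) = H^0(M,\wedge^2 V^*) \to H^2(M,V^*)$, and this $d_2$ is contraction with the (real) twisted Chern class $c$, which need not vanish. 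So the most you can conclude is
\[
[\omega] = [\hat F] + d_2\beta \quad \text{in } H^2(M,V^*)
\]
for some covariantly constant $\beta \in H^0(M,\wedge^2 V^*)$, and your correction by $d\bigl((A \buildrel \wedge \over , \pi^*\psi)\bigr)$ alone does not remove the $d_2\beta$ term.

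The paper's proof fills exactly this gap: from $\beta$ one forms the invariant $2$-form $B_0$ obtained by contracting $\beta$ twice with $A$; then $dB_0 \in F^2\Omega^3_{\rm inv}(X)$ and its $(A,\pi^*(\,\cdot\,))$-component realises $d_2\beta$ at the $E_2$-level. Only after subtracting $dB_0$ does the remaining discrepancy between the vertical pieces drop to the $E_1$-level, where it becomes $d_\nabla\psi$ for some $\psi \in \Omega^1(M,V^*)$, and then your correction $d\bigl((A \buildrel \wedge \over , \pi^*\psi)\bigr)$ finishes the job. In short, two successive corrections are needed (one for the $E_2$-ambiguity, one for the $E_1$-ambiguity), and your proposal supplies only the second.
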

\begin{proof}
It suffices to show this for $h_\mathbb{R}$, since the proof is identical for $\hat{H}$. Recall that by Proposition \ref{iso} the inclusion $\Omega^*_{{\rm inv}}(X) \to \Omega^*(X)$ of invariant forms induces an isomorphism in (untwisted) cohomology $H^*_{{\rm inv}}(X,\mathbb{R}) \to H^*(X,\mathbb{R})$. As in the proof of Proposition \ref{iso} recall that there is a natural filtration on differential forms, $F^p \Omega^n(X)$ and a similar filtration $F^p \Omega^n_{{\rm inv}}(X)$ on invariant forms. The inclusion $i : \Omega^*_{{\rm inv}}(X) \to \Omega^*(X)$ clearly respects the filtrations and as shown in the proof of Proposition \ref{iso}, the inclusion isomorphism $i_* : H^*_{{\rm inv}}(X,\mathbb{R}) \to H^*(X,\mathbb{R})$ preserves the induced filtrations on $H^*_{{\rm inv}}(X,\mathbb{R}),H^*(X,\mathbb{R})$.\\

Recall that $h$, the ungraded Dixmier-Douady class of $\mathcal{G}$ must lie in the term $F^{2,3}(\pi,\mathbb{Z})$ of the filtration on $H^3(X,\mathbb{Z})$. Accordingly $h_\mathbb{R}$ lies in $F^{2,3}(\pi,\mathbb{R})$. From the above argument this means we can find a representative for $h_\mathbb{R}$ which lies in $F^2 \Omega^3_{{\rm inv}}(X)$. Let $H'$ be such a representative. Using the connection $A$ we know that $H'$ has a decomposition of the form
\begin{equation*}
H' = \pi^*(a) + (A \buildrel \wedge \over , \pi^*(b) )
\end{equation*}
for some $a \in \Omega^3(M), b \in \Omega^2(M,V^*)$. Note that since $dH' = 0$, we must have $d_\nabla b = 0$, where $\nabla$ denotes flat connection on $V^*$. Thus $b$ defines a class in $H^2(M,V^*)$. In fact this class is a representative in $E_2^{2,1}(\pi,\mathbb{R})$ for the projection of $h_\mathbb{R}$ to $E_\infty^{2,1}(\pi,\mathbb{R})$. From Proposition \ref{toprelations} we deduce that the class of $b$ agrees the class of $\hat{c}$ as elements of $E_\infty^{2,1}(\pi,\mathbb{R}) = E_3^{2,1}(\pi,\mathbb{R})$. Note however that $\hat{c}$ is represented by $\hat{F}$ in $H^2(M,V^*)$, so we may conclude that in $E_2^{2,1}(\pi,\mathbb{R})$ we have $b = \hat{F} + d_2 \beta$, for some $\beta \in E_2^{0,2}(\pi,\mathbb{R}) = H^0(M, \wedge^2 V^*)$. We may think of $\beta$ as a covariantly constant section of $\wedge^2 V^*$. If we contract twice with the connection $A$ we get a $2$-form $B_0$ such that $B_0$ is invariant, $dB_0 \in F^2 \Omega^3_{{\rm inv}}(X)$ and $dB_0$ has the form $dB_0 = \pi^*(w) + ( A \buildrel \wedge \over , \pi^*(w'))$, for some $w \in \Omega^3(M)$, $w' \in \Omega^2(M,V^*)$ such that $w'$ coincides with $d_2 \beta$ at the $E_2$ stage. Thus $b$ and $\hat{F} + w'$ coincide at the $E_2$ stage and their difference at the $E_1$ stage has the form $d_1 \tau$ for some $\tau \in E_1^{1,1}(\pi,\mathbb{R}) = \Omega^1(M , V^*)$. Contracting $\tau$ with the connection $A$ we get a $2$-form $B_1$ such that $B_1$ is invariant, $dB_1 \in F^2 \Omega^3_{{\rm inv}}(X)$ and $dB_1$ has the form $dB_1 = \pi^*(v) + ( A \buildrel \wedge \over , \pi^*(v'))$, for some $v \in \Omega^3(M)$, $v' \in \Omega^2(M,V^*)$ where $v'$ agrees with $d_1 \tau$ in $E_1^{2,1}(\pi,\mathbb{R})$. Thus $b$ and $\hat{F} + w' + v'$ coincide as elements of $E_1^{2,1}(\pi,\mathbb{R}) = \Omega^2(M,V^*)$, so that in fact $b = \hat{F} + w' + v'$.

We conclude that there exists some invariant $2$-form $B$ (in fact $B = -B_2 - B_1$ suffices) such that $H = H' + dB$ has the form
\begin{equation*}
H = \pi^*(H_3) + ( A \buildrel \wedge \over , \pi^*( \hat{F} )).
\end{equation*}
This proves the result for $h_\mathbb{R}$.
\end{proof}

Let us now choose $H,\hat{H}$ of the form given in Equations (\ref{hstruct1}),(\ref{hstruct2}). It is straightforward to see that $\mathcal{G},\hat{\mathcal{G}}$ can be given connections and curvings in such a way that $H,\hat{H}$ are the corresponding curvatures. In fact one could just make any choice of connections and curvings and then shift the curvings by appropriate $2$-forms on $M$. 

The lifting gerbe $L(V)$ admits a canonical flat connection, where by a flat gerbe connection we simply mean that the connection on the underlying line bundle of the gerbe is flat. If a gerbe admits a flat connection then we can choose the curving and curvature both to be zero. Recall that the line bundle that underlies the lifting gerbe $L(V)$ is the line bundle associated to a principal circle bundle, which is a pullback of the principal circle bundle ${\rm Pin}^c(n) \to {\rm O}(n)$. However ${\rm Pin}^c(n) \to {\rm O}(n)$ is the principal ${\rm U}(1)$-bundle associated to a principal $\mathbb{Z}_2$-bundle, namely ${\rm Pin}_+(n) \to {\rm O}(n)$ (alternatively one could use ${\rm Pin}_-(n)$, they both give rise to the same group ${\rm Pin}^c(n)$). Thus if we think of ${\rm Pin}^c(n) \to {\rm O}(n)$ as a principal circle bundle, we see that it has a canonical flat connection, hence so does the lifting gerbe $L(V)$.\\

From the discussion at the end of Section \ref{grbconn} there exists a $2$-form $D \in \Omega^2(C)$ such that after shifting the curving of $\hat{p}^*(\hat{\mathcal{G}})$ by $D$ we can promote $\gamma$ to a stable isomorphism of gerbes with connections and curvings. In this case we also have $\hat{p}^*(\hat{H}) = p^*(H) + dD$. If we restrict to the fibers of $X,\hat{X}$, then $H,\hat{H}$ are invariant forms which are trivial in cohomology. Since the fibers are tori this means $H,\hat{H}$ actually vanish on the fibers. Then $D$ restricted to the fibers is a closed $2$-form. For any $m \in M$ let $T_m = \pi^{-1}(m)$, $\hat{T}_m = \hat{\pi}^{-1}(m)$ be the fibers of $X,\hat{X}$ over $m$. So $D|_{T_m \times \hat{T}_m}$ defines a de Rham cohomology class $[D|_{T_m \times \hat{T}_m }] \in H^2(T_m \times \hat{T}_m , \mathbb{R})$ and we can further project this to a class $\rho_m \in H^1(T_m , \mathbb{R}) \otimes H^1(\hat{T}_m ,\mathbb{R})$. Going back to the discussion at the end of Section \ref{grbconn} again and considering that $\gamma$ satisfies the Poincar\'e property, one sees that $\rho \in H^1(T_m , \mathbb{R}) \otimes H^1(\hat{T}_m ,\mathbb{R}) \simeq Hom( H_1(T_m , \mathbb{R}) , H^1(\hat{T}_m , \mathbb{R}) )$ is just the image in real coefficients of the isomorphism $\delta_m \in Hom( H_1(T_m , \mathbb{Z}) , H^1(\hat{T}_m , \mathbb{Z}))$ determined by $\gamma$.\\

Let us define $\mathcal{B} \in \Omega^2(C)$ to be
\begin{equation}\label{defofb}
\mathcal{B} = ( p^*(A) \buildrel \wedge \over , \hat{p}^*(\hat{A})).
\end{equation}
Observe that $\mathcal{B}$ has the property that for all $m \in M$, $\mathcal{B}|_{T_m \times \hat{T}_m}$ is a closed $2$-form on the fiber $T_m \times \hat{T}_m$ and that the cohomology class $[ \mathcal{B}|_{T_m \times \hat{T}_m } ] \in H^2(T_m \times \hat{T}_m , \mathbb{R})$ agrees with $\delta_m \in H^1(T_m , \mathbb{R}) \otimes H^1(\hat{T}_m , \mathbb{R})$, where we think of $H^1(T_m , \mathbb{R}) \otimes H^1(\hat{T}_m , \mathbb{R})$ as a subgroup of $H^2(T_m \times \hat{T}_m , \mathbb{R})$.

From Equations (\ref{hstruct1}),(\ref{hstruct2}) and the (\ref{defofb}) we immediately find
\begin{equation*}
\hat{p}^*(\hat{H}) - p^*(H) = d \mathcal{B} + q^*( \hat{H}_3 - H_3).
\end{equation*}
We have already seen however that $\hat{p}^*(\hat{H}) - p^*(H) = dD$. We conclude that $q^*( \hat{H}_3 - H_3 ) = d(D - \mathcal{B})$.

\begin{lemma}\label{absorbdiff}
Let $\omega \in \Omega^3(M)$ be a closed $3$-form on $M$ and $\alpha \in \Omega^2(C)$ a $2$-form on $C$ such that $q^*(\omega) = d\alpha$. For any $m \in M$, let $T_m \times \hat{T}_m$ denote the fiber of $C$ over $m$. We have that $q^*(\omega)|_{T_m \times \hat{T}_m} = 0$, so $\alpha |_{T_m \times \hat{T}_m}$ is closed. Let $a_m = [\alpha|_{T_m \times \hat{T}_m}] \in H^2(T_m \times \hat{T}_m , \mathbb{R})$ be the corresponding cohomology class. Suppose that the projection of $a_m$ to $H^1(T_m , \mathbb{R}) \otimes H^1(\hat{T}_m , \mathbb{R})$ is zero for all $m \in M$. Then there exists invariant $2$-forms $B \in \Omega^2_{{\rm inv}}(X)$, $\hat{B} \in \Omega^2_{{\rm inv}}(\hat{X})$ such that $q^*(\omega) = p^*(dB) - \hat{p}^*(d\hat{B})$.
\end{lemma}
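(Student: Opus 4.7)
The plan is to first replace $\alpha$ by an invariant representative with the same differential, then use the trigrading on invariant forms on $C$ induced by a pair of connections to read off $B$ and $\hat{B}$ directly. The hypothesis on $a_m$ is needed precisely to kill the single component of $\alpha$ that lies in neither $p^*\Omega^*_{{\rm inv}}(X)$ nor $\hat{p}^*\Omega^*_{{\rm inv}}(\hat{X})$.

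For the reduction to invariant $\alpha$, consider the quotient complex $Q^* = \Omega^*(C)/\Omega^*_{{\rm inv}}(C)$. Since $C \to M$ is itself a rank $2n$ affine torus bundle, Proposition \ref{iso} (with $A = \underline{\mathbb{R}}$ and $H = 0$) shows the inclusion $\Omega^*_{{\rm inv}}(C) \hookrightarrow \Omega^*(C)$ is a quasi-isomorphism, so $H^*(Q^*) = 0$ by the long exact sequence. Since $d\alpha = q^*\omega$ is invariant, $\alpha$ represents a cocycle in $Q^2$, so $\alpha = d\eta + \alpha'$ for some $\eta \in \Omega^1(C)$ and invariant $\alpha' \in \Omega^2_{{\rm inv}}(C)$. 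Then $d\alpha' = q^*\omega$, and since $d\eta$ restricts to an exact form on each fiber, $[\alpha'|_{T_m \times \hat{T}_m}] = a_m$. Replace $\alpha$ by $\alpha'$.

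Choose connections $A$ on $X$ and $\hat{A}$ on $\hat{X}$. These induce a splitting of $TC$ into a horizontal distribution identified with $q^*TM$ and the two vertical summands $p^*\pi^*V$, $\hat{p}^*\hat{\pi}^*\hat{V}$, so invariant $n$-forms on $C$ decompose canonically as $\bigoplus_{i+j+k=n}\Gamma(M, \wedge^i V^* \otimes \wedge^j \hat{V}^* \otimes \wedge^k T^*M)$. Write $\alpha = \sum_{i+j+k=2} \alpha^{i,j,k}$. The restriction $\alpha|_{T_m \times \hat{T}_m}$ is the sum of the pieces with $k = 0$, which are invariant forms on the fiber, hence their own harmonic representatives under the K\"unneth identification $H^*(T_m \times \hat{T}_m, \mathbb{R}) \simeq \wedge^*(V_m^* \oplus \hat{V}_m^*)$. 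The projection of $a_m$ to $V_m^* \otimes \hat{V}_m^*$ is then just the value of the section $\alpha^{1,1,0}$ at $m$, and the hypothesis forces $\alpha^{1,1,0} \equiv 0$.

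Finally, let $B \in \Omega^2_{{\rm inv}}(X)$ be the invariant $2$-form on $X$ corresponding under the analogous decomposition for $X$ to the triple of sections $(\alpha^{2,0,0}, \alpha^{1,0,1}, \alpha^{0,0,2})$, and let $\hat{B} \in \Omega^2_{{\rm inv}}(\hat{X})$ correspond to $(-\alpha^{0,2,0}, -\alpha^{0,1,1}, 0)$. Since $p^*A$ has type $(1,0,0)$ and $\hat{p}^*\hat{A}$ has type $(0,1,0)$ on $C$, one computes $p^*(B) = \alpha^{2,0,0} + \alpha^{1,0,1} + \alpha^{0,0,2}$ and $\hat{p}^*(\hat{B}) = -\alpha^{0,2,0} - \alpha^{0,1,1}$, so $p^*B - \hat{p}^*\hat{B} = \alpha$ (using $\alpha^{1,1,0} = 0$). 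Applying $d$ gives $p^*(dB) - \hat{p}^*(d\hat{B}) = d\alpha = q^*(\omega)$, as required. The main obstacle is the first step; once invariance of $\alpha$ is secured, the rest is essentially bookkeeping.
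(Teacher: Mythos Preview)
Your proof is correct and follows essentially the same approach as the paper: reduce to an invariant $\alpha$, decompose it via connections on $X$ and $\hat X$, observe that the hypothesis kills the mixed $(1,1,0)$-component, and then split the remaining pieces between $X$ and $\hat X$. Your treatment is in fact somewhat more explicit than the paper's---you spell out the quotient-complex argument for invariance via Proposition~\ref{iso} and write down $B,\hat B$ componentwise---whereas the paper simply asserts both steps and leaves the final assembly to the reader.
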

\begin{proof}
First note that we may assume that $\alpha$ is invariant without changing the fact that the class $a_m$ projects to zero in $H^1(T_m , \mathbb{R}) \otimes H^1(\hat{T}_m , \mathbb{R})$. Choose connections $A,\hat{A}$ on $X,\hat{X}$ which then defines a connection $A \oplus \hat{A}$ on $C$, since $C$ is the fiber product of $X$ and $\hat{X}$. Use this connection to decompose $\alpha$ into forms on $M$. On decomposing we get terms $\alpha_2 \in \Omega^2(M)$, $\alpha_1 \in \Omega^1(M,V^*)$, $\hat{\alpha}_1 \in \Omega^1(M,V)$, $\alpha_{0} \in \Omega^0(M,\wedge^2 V^*)$, $\hat{\alpha}_0 \in \Omega^0(M,\wedge^2 V)$ and lastly $\tilde{\alpha}_0 \in \Omega^0(M , V^* \otimes V)$. For instance $\alpha_2$ contributes a term $q^*(\alpha_2)$, $\alpha_1$ contributes a term $( p^*(A) \buildrel \wedge \over , q^*(\alpha_1))$ and so on. Notice that the vanishing condition on the $a_m$ is precisely the condition that $\tilde{\alpha}_0 = 0$. Now all the remaining terms can be thought of as pullbacks from $M,X$ or $\hat{X}$, namely $\alpha_2$ from $M$, $\alpha_1,\alpha_0$ from $X$ and $\hat{\alpha}_1,\hat{\alpha}_0$ from $\hat{X}$. The result follows.
\end{proof}

Since $D$ and $\mathcal{B}$ restrict on the fibers to the same cohomology class in $H^1(T_m , \mathbb{R}) \otimes H^1(\hat{T}_m , \mathbb{R})$ we may apply Lemma \ref{absorbdiff}. We find that there exists invariant $2$-forms $B \in \Omega^2_{{\rm inv}}(X)$, $\hat{B} \in \Omega^2_{{\rm inv}}(\hat{X})$ such that $q^*( \hat{H}_3 - H_3 ) = p^*(dB) - \hat{p}^*(d\hat{B})$. If we now shift the curvings of $\mathcal{G},\hat{\mathcal{G}}$ by $B,\hat{B}$ and replace $H,\hat{H}$ by $H+dB,\hat{H}+d\hat{B}$, we arrive at Theorem \ref{tdtthm}. Note that Equation (\ref{tdt1}) holds since $H$ is closed.\\

In fact, this argument has actually shown something a little stronger than the existence of T-duality triples:
\begin{proposition}\label{linkgerbeh}
Let $(X,\mathcal{G}),(\hat{X},\hat{\mathcal{G}})$ be T-dual pairs and $\gamma : p^*(\mathcal{G}) \to \hat{p}^*(\hat{\mathcal{G}}) \otimes q^*(L(V))$ an isomorphism satisfying the Poincar\'e property. Choose conections $A,\hat{A}$ on $X,\hat{X}$. Then there exists connections and curvings for $\mathcal{G},\hat{\mathcal{G}}$ and a $3$-form $H_3$ on $M$ such that
\begin{itemize}
\item{$(A,\hat{A},H_3)$ a T-duality triple,}
\item{the $3$-forms $H,\hat{H}$ in (\ref{tdt2}),(\ref{tdt3}) are the curvatures of the curvings,}
\item{there is a $2$-form $D \in \Omega^2(C)$ so that after shifting the curving of $\hat{p}^*(\hat{\mathcal{G}})$ by $D$, we can promote $\gamma$ to a stable isomorphism of gerbes with connections and curvings,}
\item{define $\mathcal{B}$ as in (\ref{calb}). Then $D - \mathcal{B}$ is exact.}
\end{itemize}
\end{proposition}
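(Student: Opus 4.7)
My plan is to refine the argument used for Theorem \ref{tdtthm}, which already produces a T-duality triple, and to track two additional pieces of data along the way: the 2-form $D$ promoting $\gamma$, and the exactness (rather than mere closedness) of $D - \mathcal{B}$. Starting with arbitrary connections and curvings on $\mathcal{G},\hat{\mathcal{G}}$, I would first invoke the reduction from the proof of Theorem \ref{tdtthm} to arrange their curvatures in the invariant form
\[
H_0 = \pi^*(H_3^{(0)}) + (A \buildrel \wedge \over , \pi^*(\hat{F})), \qquad \hat{H}_0 = \hat{\pi}^*(\hat{H}_3^{(0)}) + (\hat{A} \buildrel \wedge \over , \hat{\pi}^*(F)).
\]
By the theory at the end of Section \ref{grbconn}, there is a 2-form $D_0 \in \Omega^2(C)$ such that shifting the curving of $\hat{p}^*(\hat{\mathcal{G}})$ by $D_0$ promotes $\gamma$ to a stable isomorphism of gerbes with connections and curvings, so $\hat{p}^*\hat{H}_0 - p^*H_0 = dD_0$; combined with the decompositions above this rearranges to $q^*(\hat{H}_3^{(0)} - H_3^{(0)}) = d(D_0 - \mathcal{B})$.

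Next I would apply Lemma \ref{absorbdiff} with $\omega = \hat{H}_3^{(0)} - H_3^{(0)}$ and $\alpha = D_0 - \mathcal{B}$. The fiberwise hypothesis is verified as follows: by the Poincar\'e property of $\gamma$ and the defining formula (\ref{defofb}) for $\mathcal{B}$, both $[D_0|_{T_m \times \hat{T}_m}]$ and $[\mathcal{B}|_{T_m \times \hat{T}_m}]$ represent the same Poincar\'e class, so $[\alpha|_{T_m \times \hat{T}_m}]$ has zero projection to $H^1(T_m,\mathbb{R}) \otimes H^1(\hat{T}_m,\mathbb{R})$. The lemma then produces invariant 2-forms $B \in \Omega^2_{{\rm inv}}(X)$ and $\hat{B} \in \Omega^2_{{\rm inv}}(\hat{X})$ with $q^*\omega = p^*dB - \hat{p}^*d\hat{B}$. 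Unwinding the proof of the lemma yields the sharper identity $D_0 - \mathcal{B} = (p^*B - \hat{p}^*\hat{B}) + d\eta$ for an explicit 1-form $\eta \in \Omega^1(C)$ (arising from the replacement of $\alpha$ by its invariant representative); this identity is the key to the final exactness statement.

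Finally I would shift the curvings of $\mathcal{G}$ and $\hat{\mathcal{G}}$ by $B$ and $\hat{B}$. The new curvatures $H = H_0 + dB$, $\hat{H} = \hat{H}_0 + d\hat{B}$ then have the form required by (\ref{tdt2}),(\ref{tdt3}) with a common base component $H_3$, so $(A,\hat{A},H_3)$ is the desired T-duality triple and the first two bullets are in hand. Tracking the promotion data through the curving shifts (while keeping the line bundle component of the stable isomorphism unchanged) produces a new shift $D = D_0 - (p^*B - \hat{p}^*\hat{B})$, and substituting the identity from the previous step gives $D - \mathcal{B} = d\eta$, which is exact. I expect the main obstacle to be keeping careful track of signs and pullback conventions throughout these manipulations --- especially the convention relating the curving shift by $D$ to the change in curvature inherited from Section \ref{grbconn}, since the cancellation leading to exactness hinges on this.
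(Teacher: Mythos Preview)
Your proposal is correct and follows essentially the same route as the paper. Both arguments run the proof of Theorem \ref{tdtthm} while keeping track of the promotion shift $D$, and both extract from the proof of Lemma \ref{absorbdiff} the sharper statement that the invariant replacement $\alpha'$ of $\alpha = D_0 - \mathcal{B}$ equals $p^*B - \hat{p}^*\hat{B}$ exactly as a form (not merely up to an exact term), so that after the curving shifts the new $D$ satisfies $D - \mathcal{B} = \alpha - \alpha'$, which is exact. Your write-up is in fact more explicit than the paper's about how the shift transforms under the curving changes.
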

\begin{proof}
Recall the the proof of Lemma \ref{absorbdiff}. If $\alpha = D - \mathcal{B}$ is not invariant we can replace it by an invariant $2$-form $\alpha' = D' - \mathcal{B}$ representing the same cohomology class. We found that by shifting the curvings of $\mathcal{G},\hat{\mathcal{G}}$ appropriately we could absorb away the class $\alpha'$. Thus after making these shifts we can assume $D' = \mathcal{B}$. But $\alpha - \alpha'$ is exact hence so is $D-D' = D - \mathcal{B}$.
\end{proof}


\section{T-duality transforms}\label{stdtrans}

In this section we show that that the T-duality relation on pairs $(X,\mathcal{G}),(\hat{X},\hat{\mathcal{G}})$ implies isomorphisms between certain topological invariants and geometric structures on $X,\hat{X}$. Namely we consider twisted cohomology, twisted $K$-theory and Courant algebroids. The results in this section are well known in the case of principal torus bundles, so what we have accomplished here is to show that these isomorphisms persist in the presence of monodromy.


\subsection{Twisted cohomology}\label{tdttwico}

Let $(X,\mathcal{G}),(\hat{X},\hat{\mathcal{G}})$ be T-duals. Choose a T-duality triple $(A,\hat{A},H_3)$ and define $H,\hat{H},\mathcal{B}$ as in (\ref{tdt2}),(\ref{tdt3}),(\ref{calb}). Let $\xi,\hat{\xi} \in H^2(M,\mathbb{Z}_2)$ be the grading classes associated to $\mathcal{G},\hat{\mathcal{G}}$. We can associate to $\xi,\hat{\xi}$ flat line bundles $\mathbb{R}_\xi,\mathbb{R}_{\hat{\xi}}$. From this data we may define twisted cohomology groups $H^*(X,( \pi^*(\mathbb{R}_\xi) , H))$, $H^*(\hat{X}, (\hat{\pi}^*(\mathbb{R}_{\hat{\xi}}) , \hat{H}))$ which up to isomorphism depend only on $(X,\mathcal{G}),(\hat{X},\hat{\mathcal{G}})$. To simplify the notation let us write $H^*(X,(\xi,H))$, $H^*(\hat{X},(\hat{\xi},\hat{H}))$ for these groups. We will show that the T-duality triple $(A,\hat{A},H_3)$ determines an isomorphism $T : H^*(X,(\xi,H)) \to H^{*-n}(X,(\hat{\xi},\hat{H}))$, where $n$ is the rank of the fibers of $X,\hat{X}$.\\

Let $\Omega^*(X,\xi)$ denote the space of differential forms on $X$ with values in $\pi^*(\mathbb{R}_\xi)$ and similarly define $\Omega^*(\hat{X},\hat{\xi})$. We also let $d_{\xi,H}$, $d_{\hat{\xi},\hat{H}}$ denote the corresponding twisted differentials. We define a T-duality transformation $T : \Omega^*(X,\xi) \to \Omega^{*-n}(\hat{X},\hat{\xi})$ as follows:
\begin{equation}\label{tdtdef}
T \omega = \int_{C/\hat{X}} e^{-\mathcal{B}} p^*(\omega),
\end{equation}
where we use $\int_{C/\hat{X}}$ to denote integration over the fibers of $C \to \hat{X}$. The expression (\ref{tdtdef}) is also known as the {\em Hori formula} \cite{hor},\cite{bem},\cite{bhm1}. It can be understood as the twisted de Rham version of the Fourier-Mukai transform of \cite{muk}. This transformation has also been proposed in the context of mirror symmetry \cite{yau}.

Note that even though the fibers of $C \to \hat{X}$ need not be oriented it is still possible to define the fiber integration as a map $\int_{C/\hat{X}} : \Omega^*(C , \xi ) \to \Omega^{*-n}(\hat{X},\hat{\xi})$. To see this note that by (\ref{tc1}) we have $\hat{\xi} = \xi + w_1(V)$, where $w_1(V)$ is the first Stiefel-Whitney class of the vertical bundle of $X \to M$. But the vertical bundle of $C \to \hat{X}$ is just $\hat{p}^*(V)$, so has first Stiefel-Whitney class $\hat{p}^*(w_1(V))$. The lack of a consistent fiber orientation for $C \to \hat{X}$ means that the fiber integration has a sign ambiguity, but this is exactly accounted for by the orientation local system $\mathbb{R}_{w_1(V)}$. Using (\ref{hhhatrel}) one easily sees that $d_{\hat{\xi},\hat{H}}( T \omega) = T( d_{\xi,H} \omega)$, that is $T$ is a chain map. We denote also by $T$ the induced map $T : H^*(X,(\xi,H)) \to H^{*-n}(\hat{X},(\hat{\xi},\hat{H}))$ in cohomology.
\begin{theorem}\label{ttdttwcoiso}
The map $T : H^*(X,(\xi,H)) \to H^{*-n}(\hat{X},(\hat{\xi},\hat{H}))$ in cohomology induced by the map $T : \Omega^*(X,\xi) \to \Omega^{*-n}(\hat{X},\hat{\xi})$ given in (\ref{tdtdef}) is an isomorphism.
\end{theorem}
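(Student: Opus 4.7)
The strategy is a spectral sequence comparison that reduces the statement to a fiberwise Poincar\'e duality on tori.

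First I pass to invariant representatives. Proposition \ref{iso}, applied with flat coefficients $\mathbb{R}_\xi$ respectively $\mathbb{R}_{\hat\xi}$, gives quasi-isomorphisms $\Omega^*_{\mathrm{inv}}(X,(\pi^*\mathbb{R}_\xi,H)) \hookrightarrow \Omega^*(X,(\pi^*\mathbb{R}_\xi,H))$ and the analogue on $\hat X$. Since the T-duality triple $(A,\hat A, H_3)$ supplies invariant $A,\hat A,H,\hat H$, the form $\mathcal{B} = (p^*A \buildrel\wedge\over, \hat p^*\hat A)$ is invariant on $C$ and the Hori formula restricts to a chain map $T_{\mathrm{inv}}$ between the invariant complexes. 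It suffices to prove $T_{\mathrm{inv}}$ is a quasi-isomorphism.

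Next I filter both invariant complexes by horizontal form degree $F^\bullet$ as in the proof of Proposition \ref{iso}. Because $H = \pi^*H_3 + (A \buildrel\wedge\over, \pi^*\hat F)$ and $\hat H$ lie in $F^2$, the twisted differentials preserve the filtrations, yielding spectral sequences $E_r^{p,q}$ and $\hat E_r^{p,q}$ converging to $H^*(X,(\xi,H))$ and $H^*(\hat X,(\hat\xi,\hat H))$. The pullback $p^*$ preserves horizontal degree, $\mathcal{B}$ has horizontal degree zero, and $\int_{C/\hat X}$ integrates only along vertical $X$-directions, so $T_{\mathrm{inv}}$ preserves the horizontal filtration and induces a morphism of spectral sequences.

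The heart of the argument is the computation of this morphism on $E_1$. Via the connection $A$ one obtains a canonical decomposition, under which $d_0$ vanishes on invariant forms (invariant forms on a torus are closed) and
\begin{equation*}
E_1^{p,q} \simeq \Omega^p(M, \wedge^q V^* \otimes \mathbb{R}_\xi), \qquad \hat E_1^{p,q} \simeq \Omega^p(M, \wedge^q V \otimes \mathbb{R}_{\hat\xi}),
\end{equation*}
where I use $\hat V \simeq V^*$ from Proposition \ref{dualmono}. Expanding $e^{-\mathcal{B}}$, observing that $\int_{C/\hat X}$ vanishes except on the component of top $p^*A$-vertical degree $n$, and tracing through the pairing shows that $T$ on $E_1^{p,q}$ is the $\Omega^p(M,-)$-extension of the fiberwise Poincar\'e duality isomorphism $\wedge^q V^* \to \wedge^{n-q} V \otimes \wedge^n V^*$ followed by the canonical identification $\wedge^n V^* \simeq \mathbb{R}_{w_1(V)}$. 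The grading shift $\hat\xi = \xi + w_1(V)$ of Equation (\ref{tc1}) in Proposition \ref{toprelations} is exactly what absorbs this orientation line, so $T$ becomes a well-defined iso $\Omega^p(M, \wedge^q V^* \otimes \mathbb{R}_\xi) \to \Omega^p(M, \wedge^{n-q} V \otimes \mathbb{R}_{\hat\xi})$.

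By the standard comparison theorem for morphisms of spectral sequences of filtered complexes with bounded filtration, an isomorphism on $E_1$ propagates to an isomorphism on $E_\infty$ and hence on the abutment, yielding the theorem. The principal obstacle is the $E_1$-computation: one must carry out the sign and orientation bookkeeping for $e^{-\mathcal{B}}$ and the fiber integral in a coordinate-free way, and verify that the orientation local system $\mathbb{R}_{w_1(V)}$ arising from the unoriented fibers of $C \to \hat X$ is compensated precisely by the grading shift $\hat\xi - \xi = w_1(V)$. With that identification in hand, the remaining steps are routine spectral-sequence arguments.
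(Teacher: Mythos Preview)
Your proof is correct and shares the paper's core strategy: reduce to invariant forms via Proposition \ref{iso}, then compute $T$ explicitly using the decomposition furnished by the connections $A,\hat A$. The difference is that you wrap the second step in a spectral-sequence comparison, whereas the paper takes a shortcut. Once one has identified $\Omega^*_{\mathrm{inv}}(X,\xi)\simeq\Gamma(M,\mathbb{R}_\xi\otimes\wedge^*(T^*M\oplus V^*))$ and similarly on $\hat X$, the paper observes that $T_{\mathrm{inv}}$ is $\mathcal{C}^\infty(M)$-linear, hence is induced by a bundle map $\mathbb{R}_\xi\otimes\wedge^*(T^*M\oplus V^*)\to\mathbb{R}_{\hat\xi}\otimes\wedge^*(T^*M\oplus V)$; the explicit local-frame computation (your ``$E_1$ computation'') then shows this bundle map is an isomorphism. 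Thus $T_{\mathrm{inv}}$ is already an isomorphism of complexes, not merely a quasi-isomorphism, and the spectral-sequence comparison theorem is unnecessary. Your filtration argument is valid but carries extra machinery; the paper's route is more economical because the same pointwise calculation that gives you the $E_1$-isomorphism in fact gives the full chain-level isomorphism directly.
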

\begin{proof}
We see immediately that $T : \Omega^*(X,\xi) \to \Omega^{*-n}(\hat{X},\hat{\xi})$ sends invariant forms to invariant forms and so determines a map $T : H^*_{{\rm inv}}(X,(\xi,H)) \to H^{*-n}_{{\rm inv}}(\hat{X},(\hat{\xi},\hat{H}))$ between invariant twisted cohomology. Clearly we have a commutative diagram
\begin{equation*}\xymatrix{
H^*(X,(\xi,H)) \ar[r]^-T & H^{*-n}(\hat{X},(\hat{\xi},\hat{H})) \\
H^*_{{\rm inv}}(X,(\xi,H)) \ar[r]^-T \ar[u]^{i_*} & H^{*-n}_{{\rm inv}}(\hat{X},(\hat{\xi},\hat{H})) \ar[u]^{i_*}
}
\end{equation*}
where $i_*$ denotes the maps induced by inclusion of invariant forms. By Proposition \ref{iso} we need only show that $T : H^*_{{\rm inv}}(X,(\xi,H)) \to H^{*-n}_{{\rm inv}}(\hat{X},(\hat{\xi},\hat{H}))$ is an isomorphism.\\

The connection $A$ determines an isomorphism $TX = \pi^*(TM \oplus V)$ and similarly the connection $\hat{A}$ gives an isomorphism $T\hat{X} = \hat{\pi}^*(TM \oplus V^*)$. The invariant forms on $X,\hat{X}$ are then naturally identified with sections of certain bundles on $M$. Specifically we have
\begin{eqnarray*}
\Omega^*_{{\rm inv}}(X,\xi) &=& \Gamma(M , \mathbb{R}_\xi \otimes \wedge^*( T^*M \oplus V^*) ), \\
\Omega^*_{{\rm inv}}(\hat{X},\hat{\xi}) &=& \Gamma( M , \mathbb{R}_{\hat{\xi}} \otimes \wedge^*( T^*M \oplus V)).
\end{eqnarray*}
It is easy to see that $T : \Gamma(M , \mathbb{R}_\xi \otimes \wedge^*( T^*M \oplus V^*) ) \to \Gamma( M , \mathbb{R}_{\hat{\xi}} \otimes \wedge^*( T^*M \oplus V))$ is $\mathcal{C}^\infty(M)$-linear, so corresponds to a bundle map $T : \mathbb{R}_\xi \otimes \wedge^*( T^*M \oplus V^*) \to \mathbb{R}_{\hat{\xi}} \otimes \wedge^*( T^*M \oplus V)$. It suffices to show that $T$ is a bundle isomorphism.\\

Let $e_1 , \dots , e_n$ denote a local integral frame for $V$ (that is a local frame for $\Lambda$), $\hat{e}_1 , \dots , \hat{e}_n$ for $\hat{V} = V^*$ and let $e^1 , \dots , e^n$, $\hat{e}^1 , \dots , \hat{e}^n$ be the dual frames. Let $I = \{ i_1 , \dots , i_k \}$ be an ordered collection of indices $i_1 < \dots < i_k$ and use $e^I$ to denote $e^{i_1} \wedge e^{i_2} \wedge \dots \wedge e^{i_k}$. We also let $|I| = k$ denote the cardinality of $I$. Locally $\mathcal{B}$ corresponds to $e^1 \wedge \hat{e}^1 + \dots + e^n \wedge \hat{e}^n$. One finds that
\begin{equation*}
e^{-\mathcal{B}} = \sum_{k=0}^n (-1)^{\frac{1}{2}k(k+1)} \sum_{|I| = k} e^I \wedge \hat{e}^I.
\end{equation*}
From this we find that $e^{-\mathcal{B}} e^I$ consists of $(-1)^{\epsilon} (i_{\hat{e}_I} \hat{e}^1 \wedge \dots \wedge \hat{e}^n) \wedge e^1 \wedge \dots \wedge e^n$ plus terms that are lower degree in $e^1, \dots , e^n$. Here $i_{\hat{e}_I} \hat{e}^1 \wedge \dots \wedge \hat{e}^n$ denotes the multiple contraction $i_{\hat{e}_{i_1}} \dots i_{\hat{e}_{i_k}} (\hat{e}^1 \wedge \dots \wedge \hat{e}^n)$ and $\epsilon = \frac{1}{2}n(n-1)$. Now if $\omega$ is a local section of $\wedge^* T^*M$ one finds
\begin{equation}\label{tdtform}
T( \omega \wedge e^I ) = (-1)^\epsilon \omega \wedge i_{\hat{e}_I} (\hat{e}^1 \wedge \dots \hat{e}^n),
\end{equation}
showing that $T$ is indeed a bundle isomorphism.
\end{proof}

Next we will show that for compact oriented manifolds the twisted cohomology groups are equipped with natural bilinear forms which are preserved (up to sign) by T-duality.\\

Let $\Omega^*(X,\xi)$ denote the differential forms on $X$ twisted by a flat line bundle $(\mathbb{R}_\xi,\nabla)$ corresponding to a class $\xi \in H^1(X,\mathbb{Z}_2)$. Define an involution $\sigma : \Omega^*(X,\xi) \to \Omega^*(X,\xi)$ by $\sigma(\omega) = (-1)^{\frac{1}{2}k(k-1)}\omega$, where $\omega$ has degree $k$. Also let $(-1)^{\omega}$ be $1$ or $-1$ according to whether $\omega$ is even or odd. Now if $H$ is any closed $3$-form we find that for all $\omega$
\begin{equation*}
\sigma d_{\xi,H} \sigma \omega = -(-1)^{\omega} d_{\xi,-H}\omega.
\end{equation*}
It follows from this that $\sigma$ descends to an isomorphism $\sigma  : H^*(X,(\xi,H)) \to H^*(X,(\xi,-H))$ at the level of twisted cohomology.

Observe that if $H,H'$ are any two closed $3$-forms and $\xi,\xi' \in H^1(X,\mathbb{Z}_2)$ then the wedge product of forms descends to a product $\wedge : H^a(X,(\xi,H)) \otimes H^b(X,(\xi',H')) \to H^{a+b}(X,(\xi+\xi',H+H'))$. With these ingredients we can now define a bilinear form $\langle \, , \, \rangle$ on $H^*(X,(\xi,H))$ as follows. For $a,b \in H^*(X,(\xi,H))$ we define
\begin{equation*}
\langle a , b \rangle = \int_X \sigma(a) \wedge b.
\end{equation*}
Note if $\mathbb{R}_\xi$ is the flat line bundle corresponding to $\xi \in H^1(X,\mathbb{Z}_2)$ then $\mathbb{R}_\xi \otimes \mathbb{R}_\xi$ has a canonical trivialization, so we can regard $\sigma(a) \wedge b$ is an element of ordinary de Rham cohomology $H^*(X)$. The integration is then just the ordinary integration of the top degree part of $\sigma(a) \wedge b$. We call $\langle \, , \, \rangle$ the {\em Mukai pairing} on $H^*(X,(\xi,H))$, a twisted generalization of the pairing used by Mukai \cite{muk2},\cite{muk3}. This pairing, or rather the pairing at the level of bundles $(\mathbb{R}_\xi \otimes \wedge^* T^*X) \otimes (\mathbb{R}_\xi \otimes \wedge^* T^*X) \to {\rm det}(T^*M)$ is also used in generalized geometry \cite{gual} under the same name. If $X$ has dimension $d$ then $\langle a , b \rangle = (-1)^{\frac{1}{2}d(d-1)}\langle b , a \rangle$.

\begin{proposition}\label{mptc}
Let $M$ be compact of dimension $m$ and suppose $(X,\mathcal{G}),(\hat{X},\hat{\mathcal{G}})$ are rank $n$, oriented T-dual pairs over $M$. Let $(A,\hat{A},H_3)$ be a T-duality triple so that the transform $T : H^*(X,(\xi,H)) \to H^{*-n}(\hat{X},(\hat{\xi},\hat{H}))$ is defined. Choose orientations on $M$ and the vertical bundle of $X$. We get corresponding orientations on $X,\hat{X}$ and use these orientations to define Mukai pairings on $X,\hat{X}$. Then for all $a,b \in H^*(X,(\xi,H))$ we have
\begin{equation*}
\langle Ta , Tb \rangle = (-1)^{nm} \langle a , b \rangle.
\end{equation*}
\end{proposition}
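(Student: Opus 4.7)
The plan is to reduce the identity to a pointwise algebraic check on $M$. By Proposition \ref{iso} every twisted cohomology class has an invariant representative, and by the last part of the proof of Theorem \ref{ttdttwcoiso} the transform $T$ is $\mathcal{C}^\infty(M)$-linear when restricted to invariant forms. Under the identifications
$$\Omega^*_{\mathrm{inv}}(X,\xi) \simeq \Gamma(M,\, \mathbb{R}_\xi \otimes \wedge^*(T^*M \oplus V^*)), \quad \Omega^*_{\mathrm{inv}}(\hat X,\hat\xi) \simeq \Gamma(M,\, \mathbb{R}_{\hat\xi} \otimes \wedge^*(T^*M \oplus V)),$$
both Mukai pairings factor as fiber integration (which picks out the top vertical degree) followed by integration over $M$. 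It therefore suffices to verify, for each pair of invariant representatives, an equality of sections of $\wedge^m T^*M$.

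Concretely, I would pick local integral frames $e^1,\dots,e^n$ for $V^*$ with dual frame $\hat e_1,\dots,\hat e_n$, oriented so that $e^1\wedge\cdots\wedge e^n$ represents the chosen vertical orientation on $X$ and the fiber has unit volume, and do likewise on the $\hat X$ side. Write $a = \alpha \wedge e^I$, $b = \beta \wedge e^J$ with $\alpha,\beta \in \wedge^* T^*M$. Top-degree selection forces $J = I^c$, and one computes $\langle a,b\rangle$ as an explicit signed integral of $\alpha\wedge\beta$ over $M$, the sign involving $|\alpha|$, $|I|$, $\mathrm{sgn}(I,I^c)$, and the $\sigma$-factor $(-1)^{|a|(|a|-1)/2}$. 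Applying formula (\ref{tdtform}) one has $Ta = (-1)^\epsilon \alpha \wedge i_{\hat e_I}(\hat e^1\wedge\cdots\wedge\hat e^n)$ and similarly for $Tb$; the analogous top-degree constraint on $\hat X$ forces the dual complementary pairing, so $\langle Ta, Tb\rangle$ reduces to a second explicit signed integral of $\alpha\wedge\beta$ over $M$.

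The hard part is sign bookkeeping. There are four contributions to track: (i) the $\sigma$-factor $(-1)^{k(k-1)/2}$ evaluated at the degrees of $a$, $b$ on one side and of $Ta$, $Tb$ on the other (the degree shift $|Ta| = |a| + n - 2|I|$ matters); (ii) the factor $(-1)^\epsilon$ with $\epsilon = n(n-1)/2$ appearing in each application of $T$, which contributes $(-1)^{n(n-1)}=1$ overall but must be tracked alongside (iv) below; (iii) commutation signs moving horizontal forms of degrees $|\alpha|$ and $|\beta|$ past vertical factors of total degree comparable to $n$, which after imposing $|\alpha|+|\beta|=m$ are precisely what produces the claimed factor $(-1)^{nm}$; and (iv) the combinatorial identity relating $\mathrm{sgn}(I,I^c)$ to the sign by which $i_{\hat e_I}(\hat e^1\wedge\cdots\wedge\hat e^n) \wedge i_{\hat e_{I^c}}(\hat e^1\wedge\cdots\wedge\hat e^n)$ compares with $\hat e^1\wedge\cdots\wedge\hat e^n$. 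I expect (iv), together with showing that all signs other than $(-1)^{nm}$ cancel uniformly in $I$, to be the most delicate step, but it depends only on $n$ and $|I|$ and so is a finite combinatorial verification independent of the geometry. Integrating the resulting equality of sections over $M$ yields the proposition.
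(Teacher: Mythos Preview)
Your proposal is correct and follows exactly the approach the paper takes: reduce to invariant representatives via Proposition~\ref{iso}, then compute using the explicit bundle formula~(\ref{tdtform}). The paper's proof simply says ``the result follows by a straightforward calculation,'' and what you have written is a detailed outline of precisely that calculation, including the sign bookkeeping that the paper leaves implicit.
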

\begin{proof}
By Proposition \ref{iso} we may represent twisted cohomology classes by invariant forms. Now using Equation (\ref{tdtform}) the result follows by a straightforward calculation.
\end{proof}


\subsection{Twisted $K$-theory}\label{tdttkt}

Twisted $K$-theory has its origins in \cite{dk}, where it is defined in terms of finite dimensional algebra bundles. It was later realized that passing to infinite dimensional algebra bundles allowed a definition of twisted $K$-theory that includes the case of non-torsion degree $3$ classes \cite{ros}. Still more recently a great deal of interest has been generated from string theory, where twisted $K$-theory is understood to classify D-brane charge and Ramond-Ramond flux in the presence of non-trivial $H$-flux \cite{boma}.\\

There are various ways to define twisted $K$-theory and corresponding models for the category of twists. For instance one can describe twists as principal ${\rm PU}(\mathcal{H})$-bundles, where $\mathcal{H}$ is a separable Hilbert space, or as maps $X \to K(\mathbb{Z},3)$ using the fact that $K(\mathbb{Z},3)$ can be identified with ${\bf B}{\rm PU}(\mathcal{H})$. Twisted $K$-theory can then be defined as the $K$-theory of an associated $C^*$-algebra of sections of a bundle of compact operators \cite{ros},\cite{boma}. An alternative approach adopted in \cite{atseg1},\cite{atseg2} is to define twisted $K$-theory in terms of homotopy classes of sections of an associated bundle of Fredholm operators. In \cite{bcmms} the approach to twisted $K$-theory using bundle gerbes was initiated. This point of view is closely related to the previous approaches except that the twists are now described using bundle gerbes. However we would like to use {\em graded} bundle gerbes which requires a slightly more general version of twisted $K$-theory. For this we can turn to Freed, Hopkins and Teleman \cite{fht}. Their definition of twisted $K$-theory involves graded central extensions instead of graded bundle gerbes, but these two notions are very similar.\\

For a space $X$ and graded bundle gerbe $\mathcal{G}$ on $X$ we can then define twisted $K$-theory groups, denoted $K^i(X,\mathcal{G})$ where $i \in \mathbb{Z}_2$. Most of the properties of twisted $K$-theory that we use can be found within \cite{fht},\cite{cw0},\cite{cw},\cite{bcmms},\cite{atseg1}. We briefly summarize the main features of twisted $K$-theory for our purposes, additional details are given in Appendix \ref{grgtwkt}:
\begin{itemize}
\item{$K^*( X , \; \; ) : GrGrb(X) \to Gr_{\mathbb{Z}_2}{\rm Ab}$ is a functor from the category of graded gerbes on $X$ to the category of $\mathbb{Z}_2$-graded abelian groups.}
\item{There is a product structure $K^i(X,\mathcal{G}) \otimes K^j(X,\mathcal{H}) \to K^{i+j}(X,\mathcal{G} \otimes \mathcal{H})$ which is graded commutative and associative in the evident sense.}
\item{When $X$ is compact, a graded line bundle $L \to X$ defines a class $[L] \in K^0(X)$. When two stable isomorphisms $\alpha,\beta : \mathcal{G} \to \mathcal{H}$ differ by $L$, the induced maps $\alpha,\beta : K^*(X,\mathcal{G}) \to K^*(X,\mathcal{H})$ differ by multiplication by $L$, $ [L] \otimes : K^*(X,\mathcal{H}) \to K^*(X,\mathcal{H})$.}
\item{For any proper map $\phi : Y \to X$ there are pullback maps $\phi^* : K^*(X,\mathcal{G}) \to K^*(Y,\phi^*(\mathcal{G}))$ which are contravariant in $\phi$.}
\item{For any smooth map $\phi : Y \to X$ there is a pushforward \cite{fht},\cite{cw0},\cite{cw} $\phi_* : K^i(Y,\phi^*(\mathcal{G}) \otimes L(\phi)) \to K^{i-n}(Y,\mathcal{G})$ where $L(\phi)$ is the lifting gerbe of $TY \oplus \phi^*(TX)$ and $n$ is rank of $TY \oplus \phi^*(TX)$. In the case where $\phi : Y \to X$ is a fiber bundle we can replace $L(\phi)$ by $L(V)$ where $V = {\rm Ker}(\phi_*)$ is the vertical tangent bundle \cite{bar}.}
\item{There is a Mayer-Vietoris sequence in twisted $K$-theory \cite{fht},\cite{cw0} (see also Appendix \ref{grgtwkt}).}
\end{itemize}

Given a T-dual pair $(X,\mathcal{G}),(\hat{X},\hat{\mathcal{G}})$, choose an isomorphism $\gamma : p^*(\mathcal{G}) \to \hat{p}^*(\hat{\mathcal{G}}) \otimes q^*(L(V))$ satisfying the Poincar\'e property, where as usual $V$ denotes the vertical bundle of $X \to M$. From the choice of isomorphism $\gamma$ we may define a T-duality transform $T : K^*(X,\mathcal{G}) \to K^{*-n}(\hat{X},\hat{\mathcal{G}})$ in twisted $K$-theory, where $n$ is the rank of the fibers of $X,\hat{X}$. We can show that $T$ is an isomorphisms, at least when the base $M$ admits a finite good cover. The map $T$ is defined to make the following diagram commute:
\begin{equation*}\xymatrix{
K^*(C,p^*(\mathcal{G})) \ar[r]^-\gamma & K^*(C,\hat{\mathcal{G}} \otimes q^*(L(V))) \ar[d]^{\hat{p}_*} \\
K^*(X,\mathcal{G}) \ar[r]^-T \ar[u]^{p^*} & K^{*-n}(\hat{X},\hat{\mathcal{G}})
}
\end{equation*}
The map $ \hat{p}_* : K^*(C,\hat{\mathcal{G}}) \to K^{*-n}(\hat{X},\hat{\mathcal{G}})$. is the pushforward operation in twisted $K$-theory.\\

We say that an open cover $\mathcal{U}$ of $M$ is a {\em good cover} if all non-empty finite intersections in $\mathcal{U}$ are contractible. If $M$ is compact then $M$ admits a finite good cover \cite[Theorem 5.1]{botttu}.\\

\begin{theorem}\label{ttdttktiso}
Suppose that $M$ admits a finite good cover. Then for any T-dual pairs $(X,\mathcal{G}),(\hat{X},\hat{\mathcal{G}})$ on $M$ and isomorphism $\gamma : p^*(\mathcal{G}) \to \hat{p}^*(\hat{\mathcal{G}}) \otimes q^*(L(V))$ satisfying the Poincar\'e property, the map $T : K^*(X,\mathcal{G}) \to K^{*-n}(\hat{X},\hat{\mathcal{G}})$ is an isomorphism.
\end{theorem}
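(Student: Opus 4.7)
The plan is to proceed by induction on the cardinality of a finite good cover of $M$, using Mayer--Vietoris together with the functoriality of the three operations (pullback, gerbe isomorphism, pushforward) that compose to give $T$.

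\textbf{Base case.} First I would establish the statement when $M$ is contractible. On a contractible base the monodromy of both torus bundles is trivial, so after choosing a trivialization $M \simeq *$ we may identify $X \simeq T^n$, $\hat{X} \simeq \hat{T}^n$ with $\hat{T}^n$ dual to $T^n$ by Proposition \ref{dualmono}. The T-duality axiom (T1) combined with contractibility of $M$ lets us choose stable trivializations of $\mathcal{G}$ and $\hat{\mathcal{G}}$ and of the lifting gerbe $q^*(L(V))$. The isomorphism $\gamma$, which now lives on the trivial gerbe on $C = T^n \times \hat{T}^n$, is encoded by a graded line bundle on $C$ whose class, by the Poincar\'e property, is precisely the Poincar\'e class in $H^1(T^n,\mathbb{Z})\otimes H^1(\hat{T}^n,\mathbb{Z})$. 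Thus $T$ is the classical $K$-theoretic Fourier--Mukai transform for dual tori, which is known to be an isomorphism. (Modulo the Poincar\'e line bundle ambiguity by pullbacks from the factors, which only changes $T$ by multiplication by an invertible class and is therefore harmless.)

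\textbf{Inductive step.} Assume the theorem for all bases admitting a good cover of cardinality $<N$, and let $\{U_1,\dots,U_N\}$ be a good cover of $M$. Set $U = U_1\cup\cdots\cup U_{N-1}$ and $V = U_N$. Then $U$, $V$, and $U\cap V$ all admit good covers of cardinality $<N$. Restricting $\pi,\hat\pi,\mathcal{G},\hat{\mathcal{G}},\gamma$ over an open $W\subseteq M$ yields a T-dual pair $(X|_W,\mathcal{G}|_{X|_W}),(\hat X|_W,\hat{\mathcal{G}}|_{\hat X|_W})$ with the restricted isomorphism satisfying the Poincar\'e property. The Mayer--Vietoris sequence in twisted $K$-theory (cited in the bullet list before the theorem) gives long exact sequences
\begin{equation*}
\cdots \to K^*(X|_M,\mathcal{G}) \to K^*(X|_U,\mathcal{G})\oplus K^*(X|_V,\mathcal{G}) \to K^*(X|_{U\cap V},\mathcal{G}) \to \cdots
\end{equation*}
and likewise for $\hat X$. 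The key observation is that each ingredient of $T$ (the pullback $p^*$, the gerbe isomorphism $\gamma$, and the fiber integration $\hat p_*$) is natural with respect to restriction to open subsets, the last because $\hat p$ is a fiber bundle so pushforward commutes with open pullback. This yields a morphism of Mayer--Vietoris sequences from the $X$-side to the $\hat X$-side. Applied to the restrictions over $U$, $V$, and $U\cap V$, the inductive hypothesis says $T$ is an isomorphism on these pieces; the five lemma then forces $T$ to be an isomorphism on all of $M$.

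\textbf{Main obstacles.} The principal difficulty is the base case: identifying the transform with the Fourier--Mukai isomorphism for dual tori. One must carefully track the sign/graded structure coming from $L(V)$ on a contractible base (which trivializes, but whose trivialization must be compatible with the one implicit in the Poincar\'e property so that $T$ equals the honest FM transform, not a twist thereof); this is where the Poincar\'e property actually earns its name. The other, more bookkeeping, obstacle is verifying that the connecting maps in Mayer--Vietoris commute with $T$. Because the Mayer--Vietoris boundary is constructed via pullback to the intersection and an excision-type procedure, and because $T$ is built out of operations all of which commute with open pullback and with the projections $p,\hat p$ over open subsets of $M$, the compatibility is formal but must be checked in the model of twisted $K$-theory being used (the one from Appendix \ref{grgtwkt}).
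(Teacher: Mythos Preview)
Your proposal is correct and follows essentially the same route as the paper: induction on the size of a finite good cover, with the inductive step handled by Mayer--Vietoris and the five lemma, and the base case reduced to the $K$-theoretic Fourier--Mukai transform on dual tori via the Poincar\'e property. The paper's proof is slightly terser about the bookkeeping (naturality of $T$ with respect to restriction and compatibility with the connecting maps), but your identification of these as the points requiring care is accurate.
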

\begin{proof}
Our proof is simply an adaptation of the approach taken in \cite{bunksch}. We prove this result by using induction on the number of elements in a finite good cover. To start the induction we need to show the result is true if the base is contractible. We will prove this shortly.

Now assume the result holds is whenever the base has a good cover with at most $k$ elements. Suppose $M$ has a good cover $\mathcal{U}$ with $k+1$ elements, say $\mathcal{U} = \{ U_0 , U_1 , \dots , U_{k} \}$. Then $N = U_1 \cup U_2 \cup \dots \cup U_k$ has a finite good cover with $k$-elements and so does $N \cap U_{0}$, namely $\{ U_0 \cap U_1 , U_0 \cap U_2 , \dots U_0 \cap U_k \}$. For an open subset $W \subset M$ we let $X|_W$ denote $\pi^{-1}(W)$ and $\mathcal{G}|_W$ denote the restriction of $\mathcal{G}$ to $X|_W$. Similarly define $\hat{X}|_W , C|_W$ and $\hat{\mathcal{G}}|_W$. By restriction $\gamma$ defines isomorphisms $\gamma|_W : p^*(\mathcal{G}|_W) \to \hat{p}^*(\hat{\mathcal{G}}|_W) \otimes q^*(L(V))$ satisfying the Poincar\'e property. Using $\gamma|_W$ we may construct over $W$ a T-duality transformation which we denote by $T|_W : K^*(X|_W , \mathcal{G}|_W) \to K^{*-n}(\hat{X}|_W , \hat{\mathcal{G}}|_W)$. We obtain a commutative diagram with rows given by Mayer-Vietoris sequences:
\begin{equation*}\xymatrix@C=10pt{
\cdots \ar[r] &  K^*(X|_{N \cap U_0} , \mathcal{G} ) \ar[d]^{T|_{N \cap U_0}} \ar[r] & K^*(X|_N,\mathcal{G})  \oplus  K^*(X|_{U_0},\mathcal{G}) \ar[d]^{T|_N \oplus T_{U_0} } \ar[r] & K^*(X,\mathcal{G}) \ar[d]^{T} \ar[r] & \cdots \\
\cdots \ar[r] & K^{*-n}(\hat{X}|_{N \cap U_0} , \hat{\mathcal{G}} ) \ar[r] & K^{*-n}(\hat{X}|_N,\hat{\mathcal{G}}) \oplus K^{*-n}(\hat{X}|_{U_0},\hat{\mathcal{G}}) \ar[r] & K^{*-n}(\hat{X},\hat{\mathcal{G}}) \ar[r] & \cdots
}
\end{equation*}
where we have dropped the restriction notation for the gerbes $\mathcal{G},\hat{\mathcal{G}}$. Now since $N,U_0$ and $N \cap U_0$ all admit good covers with no more than $k$ elements we have by induction that $T|_N,T|_{U_0}$ and $T|_{N \cap U_0}$ are isomorphisms. Thus by an application of the five lemma we find that $T$ is also an isomorphism.\\

All that remains is to show the result when $M$ is contractible. In this case the torus bundles $X,\hat{X}$ are both trivial, so we can identify $X,\hat{X}$ with the trivial bundles $M \times T^n$, $M \times \hat{T}^n$ (here $T^n = \mathbb{R}^n/\mathbb{Z}^n$ and $\hat{T}^n = (\mathbb{R}^n)^*/(\mathbb{Z}^n)^*$ is the dual torus). The gerbes $\mathcal{G},\hat{\mathcal{G}}$ are by definition of T-duality trivial on the fibers, so in this case we have that $\mathcal{G},\hat{\mathcal{G}}$ are trivial. Choosing trivializations of $\mathcal{G},\hat{\mathcal{G}}$, we can then identify $K^*(X,\mathcal{G})$ with $K^*(M \times T^n)$, which itself is isomorphic to $K^{*-b}(T^n)$, where $b$ is the dimension of the base. To see this one can apply the Atiyah-Hirzebruch spectral sequence \cite{atseg1} to the fiber bundle $M \times T^n \to M$. We can similarly identify $K^*(\hat{X},\hat{\mathcal{G}})$ with $K^{*-b}(\hat{T}^n)$. The T-duality transform then reduces to a map $T' : K^{*-b}(T^n) \to K^{*-b-n}(T^n)$ defined by a commutative diagram
\begin{equation}\xymatrix{
K^*(T^n \times \hat{T}^n) \ar[r]^-{\otimes P} & K^*(T^n \times \hat{T}^n) \ar[d]^{\hat{p}_*} \\
K^*(T^n) \ar[r]^-{T'} \ar[u]^{p^*} & K^{*-n}(\hat{T}^n)
}
\end{equation}
where $P$ is a certain line bundle on $T^n \times \hat{T}^n$. In fact, since $\gamma$ satisfies the Poincar\'e property we can choose the trivializations of $\mathcal{G},\hat{\mathcal{G}}$ so that the map $K^*(T^n \times \hat{T}^n) \to K^*(T^n \times \hat{T}^n)$ is given by tensoring with the Poincar\'e line bundle. Thus $T'$ is a $K$-theoretic version of the Fourier-Mukai transform \cite{muk} and a straightforward computation shows that $T'$ is an isomorphism.
\end{proof}

Assume $X$ is compact and let $\mathcal{G}$ be a graded gerbe on $X$ with grading class $\xi \in H^1(X,\mathbb{Z}_2)$. Choose a connection and curving for $\mathcal{G}$ with curvature $3$-form $H \in \Omega^3(X)$. We then have a twisted Chern character $Ch_{\mathcal{G}} : K^*(X , \mathcal{G}) \to H^*(X , (\xi , H)) $ which depends on $\mathcal{G}$ as well as the connection and curving. From the point of view of T-duality, or rather the Fourier-Mukai transform, it is natural to consider a modified version of the twisted Chern character. Define $v :  K^*(X , \mathcal{G}) \to H^*(X , (\xi , H)) $ by $v(x) = Ch_{\mathcal{G}}(x)\sqrt{\hat{A}(X)}$, where $\hat{A}(X)$ is the $\hat{A}$-genus of $X$. We call $v$ the {\em Mukai map}. For any $x \in K^*(X,\mathcal{G})$, $v(x)$ is also known as the {\em Mukai vector} of $x$.\\

Let $(X,\mathcal{G}),(\hat{X},\hat{\mathcal{G}})$ be T-dual pairs over $M$. Choose connections, curvings and T-duality triple $(A,\hat{A},H_3)$ according to Proposition \ref{linkgerbeh}. We then have a commutative diagram:
\begin{equation*}\xymatrix{
K^*(C,p^*(\mathcal{G})) \ar[r]^-\gamma \ar[d]^v & K^*(C,\hat{p}^*(\hat{\mathcal{G}}) \otimes q^*(L(V)) ) \ar[d]^{v} \\
H^*(C,p^*(\xi,H))  \ar[r]^-{e^{-\mathcal{B}}} & H^*(C,\hat{p}^*(\hat{\xi} , \hat{H})) 
}
\end{equation*}
where the vertical maps in this diagram correspond to the Mukai maps for $p^*(\mathcal{G})$ and $\hat{p}^*(\hat{\mathcal{G}}) \otimes q^*(L(V))$. Note that since $V$ is flat $\sqrt{\hat{A}(V)} = 1$, and $\sqrt{\hat{A}(C)} = q^*\sqrt{\hat{A}(M)}$. Furthermore, it is clear that the Mukai map is preserved by pullback and pushforward in the sense that we have commutative diagrams
\begin{equation*}\xymatrix{
K^*(X,\mathcal{G}) \ar[r]^-{p^*} \ar[d]^v & K^*(C,p^*(\mathcal{G})) \ar[d]^v \\
H^*(X,(\xi,H))  \ar[r]^-{p^*} & H^*(C, p^*(\xi,H)) 
}
\end{equation*}
and
\begin{equation*}\xymatrix{
K^*(C, \hat{p}^*(\hat{\mathcal{G}}) \otimes q^*(L(V)) ) \ar[d]^{v} \ar[r]^-{\hat{p}_*} & K^{*-n}(\hat{X},\hat{\mathcal{G}}) \ar[d]^v \\
H^*(C,\hat{p}^*(\hat{\xi},\hat{H}))  \ar[r]^-{\hat{p}_*} & H^{*-n}(\hat{X},(\hat{\xi},\hat{H})) 
}
\end{equation*}
where the second diagram commutes by an application of Riemann-Roch, Theorem \ref{rrgbg}. Putting these together we immediately have shown the following:
\begin{proposition}\label{tkchern}
Let $M$ be compact and $(X,\mathcal{G}),(\hat{X},\hat{\mathcal{G}})$ be T-dual pairs over $M$.  Choose connections, curvings and T-duality triple $(A,\hat{A},H_3)$ according to Proposition \ref{linkgerbeh}. From this data we have induced T-duality isomorphisms $T : K^*(X,\mathcal{G}) \to K^{*-n}(\hat{X},\hat{\mathcal{G}})$ and $T : H^*(X,(\xi,H)) \to H^{*-n}(\hat{X},(\hat{\xi},\hat{H}))$. We then have a commutative diagram:
\begin{equation*}\xymatrix{
K^*(X,\mathcal{G}) \ar[r]^-T \ar[d]^v & K^{*-n}(\hat{X},\hat{\mathcal{G}}) \ar[d]^v \\
H^*(X,(\xi,H))  \ar[r]^-T & H^{*-n}(\hat{X},(\hat{\xi},\hat{H})) .
}
\end{equation*}
\end{proposition}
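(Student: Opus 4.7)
The plan is to paste together the three commutative squares already displayed in the paragraph immediately preceding the statement, and to recognise the horizontal composite of the pasted rectangle as precisely the pair of T-duality transforms in question.

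First I would unravel the definitions: by construction the $K$-theoretic T-duality transform factors as $T = \hat{p}_{*} \circ \gamma \circ p^{*}$, and by the Hori formula (\ref{tdtdef}) the cohomological T-duality transform factors as $T = \hat{p}_{*} \circ e^{-\mathcal{B}} \circ p^{*}$. So the outer square to be established decomposes into three smaller squares placed side by side along the correspondence space $C$: a pullback square along $p$, a ``twist'' square exchanging $\gamma$ for $e^{-\mathcal{B}}$, and a pushforward square along $\hat{p}$. The four vertical arrows of the pasted rectangle are the Mukai maps on $X$, on $C$, on $C$ again, and on $\hat{X}$.

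Next I would invoke, in order, the three commutativities already assembled in the preamble. The pullback square is naturality of the Chern character twisted by $\sqrt{\hat{A}}$ under $p^{*}$. The middle square, relating $\gamma$ to $e^{-\mathcal{B}}$, is the first of the three displayed diagrams; its commutativity uses crucially that the connections, curvings and $(A,\hat{A},H_{3})$ have been chosen compatibly via Proposition~\ref{linkgerbeh}, so that $\gamma$ is promoted to a stable isomorphism of gerbes with connections and curvings (with $D-\mathcal{B}$ exact). The pushforward square is the Grothendieck--Riemann--Roch identity of Theorem~\ref{rrgbg} applied to $\hat{p}$.

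The one substantive point, as opposed to bookkeeping, is that the $\sqrt{\hat{A}}$-factors in the Mukai map fit together across the three squares without residue. For this I would use that the vertical bundles of $p\colon C\to X$ and $\hat{p}\colon C\to\hat{X}$ are pullbacks of the flat bundles $\hat{V}$ and $V$ over $M$ (using $\hat{V}\cong V^{*}$ from Proposition~\ref{dualmono}), so each has $\sqrt{\hat{A}}=1$; consequently $\sqrt{\hat{A}(C)} = p^{*}\sqrt{\hat{A}(X)} = \hat{p}^{*}\sqrt{\hat{A}(\hat{X})}$, and the Riemann--Roch correction in the pushforward square collapses to an equality. I do not expect any serious obstacle beyond this: once the three preceding diagrams are in hand, the proof is a purely formal diagram chase.
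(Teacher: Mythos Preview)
Your proposal is correct and follows essentially the same approach as the paper: the paper's proof is literally the sentence ``Putting these together we immediately have shown the following'' after displaying the three squares you identify, and your additional remarks on why the $\sqrt{\hat{A}}$-factors cancel (flatness of $V$, hence $\sqrt{\hat{A}(C)}=q^*\sqrt{\hat{A}(M)}$) and why Proposition~\ref{linkgerbeh} is needed for the middle square (so that $D-\mathcal{B}$ is exact and $e^{-D}=e^{-\mathcal{B}}$ in cohomology) make explicit exactly what the paper leaves implicit.
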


Note that it is somewhat superfluous to use the Mukai maps rather than the twisted Chern characters, because we have $\sqrt{\hat{A}(X)} = \pi^*(\sqrt{\hat{A}(M)})$ and $\sqrt{\hat{A}(\hat{X})} = \hat{\pi}^*( \sqrt{\hat{A}(M)})$. We prefer to use the Mukai maps since as we will see they can be used to relate natural pairings in twisted cohomology and twisted $K$-theory.\\

Just as we defined a Mukai pairing on twisted cohomology, there is a similar pairing at the level of twisted $K$-theory. Let $\mathcal{G}$ be a graded gerbe on $X$. If we define twisted $K$-theory in terms of homotopy classes of sections of bundles of Fredholm operators, then by taking adjoints we get a canonical isomorphism $\sigma : K^*(X,\mathcal{G}) \to K^*(X,\mathcal{G}^*)$, where $\mathcal{G}^*$ is the dual graded gerbe, which is defined by taking the dual of the underlying line bundle of $\mathcal{G}$ which inherits a dual gerbe product. If we assume $X$ is $d$ dimensional, compact and spin we have a natural map $I : K^*(X) \to K^{*-d}(pt)$, which is just the pushforward from $X$ to a point. Note that $I$ only depends on a choice of orientation of $X$ rather than a choice of spin structure. Using this we get a natural pairing on $K^*(X,\mathcal{G})$ defined as follows
\begin{equation*}
\langle a , b \rangle = I( \sigma(a) b ).
\end{equation*}
Naturally we call $\langle \, , \, \rangle$ the {\em Mukai pairing} in twisted $K$-theory. We could generalize this to ${\rm Spin}^c$-manifolds but then pushforward to $K^*(pt)$ depends on the explicit choice of ${\rm Spin}^c$-structure, not just the orientation.

Let $\mathcal{G}$ be a graded gerbe with grading class $\xi \in H^1(X,\mathbb{Z}_2)$. Choose a connection and curving for $\mathcal{G}$ with curvature $3$-form $H \in \Omega^3(X)$. We have the Mukai map $v : K^*(X,\mathcal{G}) \to H^*(X,(\xi,H))$. The Riemann-Roch theorem for graded bundle gerbes immediately implies that
\begin{equation}\label{resppair}
\langle v(a) , v(b) \rangle = \langle a , b \rangle,
\end{equation}
where the left hand side is the Mukai pairing in twisted cohomology and the right hand side the Mukai pairing in twisted $K$-theory.

\begin{proposition}
Let $M$ be compact $m$-dimensional, oriented and suppose $(X,\mathcal{G}), \linebreak (\hat{X},\hat{\mathcal{G}})$ are rank $n$ T-dual pairs over $M$. Suppose the vertical bundle of $X$ is oriented and that $X$ is spin. Then $\hat{X}$ is also spin and we have Mukai pairings on $X,\hat{X}$ in twisted $K$-theory. Let $T : K^*(X,\mathcal{G}) \to K^{*-n}(\hat{X},\hat{\mathcal{G}})$ be the T-duality transformation associated to an isomorphism $\gamma : p^*(\mathcal{G}) \to \hat{p}^*(\hat{\mathcal{G}}) \otimes q^*(L(V))$ satisfying the Poincar\'e property. We then have
\begin{equation*}
\langle Ta , Tb \rangle = (-1)^{nm}\langle a , b \rangle,
\end{equation*}
for all $a,b \in K^*(X,\mathcal{G})$.
\end{proposition}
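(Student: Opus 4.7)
The plan is to reduce this identity to its cohomological counterpart, Proposition \ref{mptc}, by transporting both Mukai pairings through the Mukai map $v$. First I would verify that $\hat{X}$ is spin: writing $TX \cong \pi^{*}(TM \oplus V)$ and $T\hat{X} \cong \hat{\pi}^{*}(TM \oplus V^{*})$ via the connections, and using that $V$, and hence $V^{*}$, is oriented, a short Stiefel--Whitney class computation reduces the spin condition on $\hat{X}$ to the spin condition on $X$, so that the Mukai pairing on $K^{*}(\hat{X},\hat{\mathcal{G}})$ is defined.

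Next I would choose connections $A,\hat{A}$ together with compatible gerbe connections, curvings, and a T-duality triple $(A,\hat{A},H_{3})$ via Proposition \ref{linkgerbeh}, so that both T-duality transforms $T : K^{*}(X,\mathcal{G}) \to K^{*-n}(\hat{X},\hat{\mathcal{G}})$ and $T : H^{*}(X,(\xi,H)) \to H^{*-n}(\hat{X},(\hat{\xi},\hat{H}))$ are defined from the same data. Proposition \ref{tkchern} then yields $v \circ T = T \circ v$, and the Riemann--Roch identity (\ref{resppair}) gives $\langle v(x),v(y)\rangle_{H} = \langle x,y\rangle_{K}$. Assembling these, for any $a,b \in K^{*}(X,\mathcal{G})$ I would compute
\begin{equation*}
\langle Ta, Tb \rangle_{K} \;=\; \langle v(Ta), v(Tb) \rangle_{H} \;=\; \langle Tv(a), Tv(b) \rangle_{H} \;=\; (-1)^{nm}\langle v(a), v(b) \rangle_{H} \;=\; (-1)^{nm}\langle a, b \rangle_{K},
\end{equation*}
where the penultimate equality is Proposition \ref{mptc}.

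The main obstacle, such as it is, lies in being careful with the identity (\ref{resppair}): one must verify that the $K$-theoretic adjoint involution $\sigma$ is intertwined by $v$ with the cohomological involution $\sigma(\omega) = (-1)^{k(k-1)/2}\omega$, and that the index map $I_{X}$ (respectively $I_{\hat{X}}$) corresponds, under the twisted Chern character and the Atiyah--Singer index theorem, to integration of the Mukai vector. These compatibilities are standard once consistent spin structures are fixed, and require no further T-duality input. The sign $(-1)^{nm}$ is inherited directly from the cohomological case, so the proof ultimately amounts to using $v$ as a bridge between the two Mukai pairings and invoking Proposition \ref{mptc}.
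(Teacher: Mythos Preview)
Your proposal is correct and follows exactly the paper's own argument: choose data via Proposition \ref{linkgerbeh}, then chain together Equation (\ref{resppair}), Proposition \ref{tkchern}, and Proposition \ref{mptc} to obtain the displayed four-step computation. The paper omits the explicit spin check for $\hat{X}$ and the remarks on compatibility of $\sigma$ with $v$, but these are reasonable elaborations of points the paper takes for granted.
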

\begin{proof}
Choose connections, curvings and T-duality triple $(A,\hat{A},H_3)$ according to Proposition \ref{linkgerbeh}. Then we use Propositions (\ref{tkchern}), (\ref{mptc}) and Equation (\ref{resppair}) as follows:
\begin{eqnarray*}
\langle Ta , Tb \rangle &=& \langle v(Ta) , v(Tb) \rangle \\
&=& \langle T(va) , T(vb) \rangle \\
&=& (-1)^{nm}\langle va , vb \rangle \\
&=& (-1)^{nm}\langle a,b \rangle.
\end{eqnarray*}
\end{proof}


\subsection{Courant algebroids}\label{tdtca}

We show that a T-duality triple determines an isomorphism of Courant algebroids. This section is largely a translation of \cite[Chapter 8]{gual} and \cite[Chapter 7]{cav} into the non-principal case, for which we claim no originality. See also \cite{hu} for connections between T-duality and Courant algebroids.\\

Courant algebroids were introduced by Liu, Weinstein and Xu \cite{lwx} as a generalization of the bracket used by Courant in the study of Dirac structures \cite{cour}, which arise in the study of Hamiltonian systems. We recall the definition of a Courant algebroid from the point of view of the Dorfman bracket \cite{roy}. A {\em Courant algebroid} on a smooth manifold $X$ is vector bundle $E \to X$, $\mathbb{R}$-bilinear bracket $[ \, , \, ] : \Gamma(E) \otimes \Gamma(E) \to \Gamma(E)$ called the {\em Dorfman bracket}, non-degenerate bilinear form $( \, , \, )$ and a bundle map $\rho : E \to TX$ called the {\em anchor} such that
\begin{itemize}
\item{$[a,[b,c]] = [[a,b],c] + [b,[a,c]]$,}
\item{$\rho[a,b] = [\rho(a),\rho(b)]$,}
\item{$[a,fb] = \rho(a)(f)b + f[a,b]$,}
\item{$[a,b] + [b,a] = d (a,b)$,}
\item{$\rho(a)(b,c) = ([a,b],c) + (b,[a,c])$,}
\end{itemize}
where $a,b,c \in \Gamma(E)$, $f$ is a function on $X$ and $d$ is the operator $d : \mathcal{C}^\infty(X) \to \Gamma(E)$ defined by $(df , a ) = \rho(a)(f)$.\\ 

A Courant algebroid $E \to X$ is {\em exact} if the sequence
\begin{equation*}\xymatrix{
0 \ar[r] & T^*X \ar[r]^{\rho^*} & E \ar[r]^\rho & TX \ar[r] & 0
}
\end{equation*}
is exact. Here $\rho^*$ is the transpose $T^*X \to E^*$ of $\rho$ followed by the identification of $E$ and $E^*$ using the pairing. Exact Courant algebroids over a manifold $X$ are classified by third cohomology with real coefficients, $H^3(X,\mathbb{R})$. In fact given a closed $3$-form $H$ representing a class in $H^3(X,\mathbb{R})$ we give $E = TX \oplus T^*X$ the structure of a Courant algebroid with $H$-twisted Dorfman bracket \cite{sw} given by
\begin{equation}\label{courb}
[(A , \alpha) , (B , \beta) ]_H = ([A,B] , \mathcal{L}_A \beta - i_B d \alpha + i_B i_A H).
\end{equation}
The pairing $( \, , \, )$ on $E$ is the natural paring of $TX$ with $T^*X$ and anchor the projection $\rho : E \to TX$. Then $(E,[ \, , \, ]_H , ( \, , \, ) , \rho)$ is an exact Courant algebroid and one can show that every exact Courant algebroid on $X$ is isomorphic to one of this form. Given two closed $3$-forms $H,H'$ the associated exact Courant algebroids are isomorphic if and only if $H$ and $H'$ represent the same class in $H^3(X,\mathbb{R})$. The classification of exact Courant algebroids is due to \v{S}evera \cite{sev}.\\

Let $(X,\mathcal{G})$ be a pair consisting of an affine torus bundle and graded gerbe $\mathcal{G}$. Let $H \in \Omega^3(X)$ be an invariant $3$-form representing the ungraded Dixmier-Douady class of $\mathcal{G}$. We give $E = TX \oplus T^*X$ the structure of an exact Courant algebroid using the $H$-twisted Dorfman bracket (\ref{courb}). Just as we defined invariant forms on $X$ we can also speak of invariant vector fields, thus we may also speak of invariant sections of $E$. Using the fact that $H$ is invariant one easily sees that the invariant sections of $E$ are closed under the Dorfman bracket. In fact, the invariant sections of $E$ can be identified with the sections of a vector bundle $E^{{\rm red}}$ on $M$ and this bundle then inherits the structure of a (non-exact) Courant algebroid on $M$. For instance the anchor is given by composing the anchor $E \to TX$ with the projection $\pi_* : TX \to TM$. Since this composition is invariant in the obvious sense, it descends to a bundle map $\rho^{{\rm red}} : E^{{\rm red}} \to TM$. The resulting Courant algebroid $(E^{{\rm red}} , [ \, , \, ]_H^{{\rm red}} , ( \, , \, ) , \rho^{{\rm red}})$ on $M$ will be simply denoted $E^{{\rm red}}$. Up to isomorphism it does not depend on the choice of invariant representative $H$.\\

Let $(X,\mathcal{G}),(\hat{X},\hat{\mathcal{G}})$ be T-duals. Choose a T-duality triple $(A,\hat{A},H_3)$ and use it to define $H,\hat{H},\mathcal{B}$ as given in Equations (\ref{tdt2}),(\ref{tdt3}),(\ref{calb}). The twisted connections $A$,$\hat{A}$ yield associated splittings
\begin{eqnarray*}
TX &=& \pi^*(TM \oplus V), \\
T\hat{X} &=& \hat{\pi}^*(TM \oplus V^*).
\end{eqnarray*}
Let $E^{{\rm red}}$ denote the Courant algebroid on $M$ obtained by taking invariant sections of $TX \oplus T^*X$ with $H$-twisted Dorfman bracket and similarly let $\hat{E}^{{\rm red}}$ be the Courant algebroid on $M$ obtained by taking invariant sections over $\hat{X}$. The choice of $A,\hat{A}$ yield isomorphisms
\begin{eqnarray*}
E^{{\rm red}} &=& TM \oplus V \oplus V^* \oplus T^*M, \\
\hat{E}^{{\rm red}} &=& TM \oplus V^* \oplus V \oplus T^*M.
\end{eqnarray*}
Let $\phi : E^{{\rm red}} \to \hat{E}^{{\rm red}}$ be the bundle isomorphism which exchanges positions of the $V,V^*$ factors, that is
\begin{equation*}
\phi( Y , a , \alpha , \eta) = (Y , \alpha , a , \eta).
\end{equation*}

\begin{theorem}\label{cai} The map $\phi : E^{{\rm red}} \to \hat{E}^{{\rm red}}$ is an isomorphism of Courant algebroids.
\end{theorem}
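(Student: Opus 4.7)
The plan is to verify the three axioms of a Courant algebroid isomorphism for $\phi$: preservation of the bilinear pairing, the anchor, and the Dorfman bracket.

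Preservation of the pairing and anchor is essentially immediate. The bilinear pairing on $E^{{\rm red}} = TM \oplus V \oplus V^* \oplus T^*M$, inherited from that on $TX \oplus T^*X$, couples $V$ with $V^*$ and $TM$ with $T^*M$, so the swap $V \leftrightarrow V^*$ preserves it. The anchor on $E^{{\rm red}}$ is the projection onto the $TM$ summand (the natural anchor on $TX \oplus T^*X$ composed with $\pi_*$), which $\phi$ fixes.

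The essential content is that $\phi$ intertwines the Dorfman brackets. Adapting the principal-bundle treatment of Cavalcanti--Gualtieri, the strategy is to work on the correspondence space $C = X \times_M \hat{X}$. The decisive input is the T-duality triple identity (\ref{hhhatrel}), $\hat{p}^*(\hat{H}) - p^*(H) = d\mathcal{B}$ with $\mathcal{B} = (p^*(A) \buildrel \wedge \over , \hat{p}^*(\hat{A}))$. Consequently the $B$-field transformation $e^{-\mathcal{B}} : (X, \xi) \mapsto (X, \xi - i_X \mathcal{B})$ on $TC \oplus T^*C$ is a Courant algebroid isomorphism intertwining the $p^*H$-twisted and $\hat{p}^*\hat{H}$-twisted Dorfman brackets. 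Taking invariant sections yields a reduced Courant algebroid $\mathbb{E}^{{\rm red}}$ on $M$; both $E^{{\rm red}}$ and $\hat{E}^{{\rm red}}$ embed into $\mathbb{E}^{{\rm red}}$ via pullback by $p$ and $\hat{p}$ respectively. I would then show that $e^{-\mathcal{B}} \circ p^*$ and $\hat{p}^* \circ \phi$ determine compatible inclusions of $E^{{\rm red}}$ into $\mathbb{E}^{{\rm red}}$ (equipped with the $\hat{p}^*\hat{H}$-twisted bracket), thereby exhibiting $\phi$ as a Courant algebroid isomorphism.

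The main obstacle is the explicit identification of these two embeddings. Using the decompositions $TX = \pi^*(TM \oplus V)$ and $T\hat{X} = \hat{\pi}^*(TM \oplus V^*)$, one lifts an invariant section $(Y, a, \alpha, \eta) \in E^{{\rm red}}$ through $p^*$ to $TC \oplus T^*C$ and then applies $e^{-\mathcal{B}}$; the result must agree, modulo terms tangent to the fibers of $C \to M$ (which descend trivially under reduction), with $\hat{p}^*(\phi(Y, a, \alpha, \eta)) = \hat{p}^*(Y, \alpha, a, \eta)$. The pairing structure of $\mathcal{B}$ is designed precisely so this swap of $V$- and $V^*$-components comes out correctly. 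The bookkeeping beyond the principal case lies in tracking the flat connection $\nabla$ on $V$ and its dual on $V^*$ when commuting vector fields on $X$ with sections of $V$, or exterior derivatives with sections of $V^*$; these appear symmetrically in the bracket formulas on either side and so do not obstruct the final identification, though they form the technical heart of the argument.
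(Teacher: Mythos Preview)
Your treatment of the anchor and the pairing is fine, and the instinct to exploit the identity $\hat{p}^*(\hat{H}) - p^*(H) = d\mathcal{B}$ is the right one. However, the specific mechanism you describe for the bracket does not work as stated, and this is a genuine gap rather than bookkeeping with $\nabla$.

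The problem is the claim that $e^{-\mathcal{B}} \circ p^*$ and $\hat{p}^* \circ \phi$ give compatible inclusions into $\mathbb{E}^{{\rm red}}$ modulo terms tangent to the fibres of $C \to M$. Compute in the splitting $TC \oplus T^*C \cong q^*\bigl(TM \oplus V \oplus V^* \oplus T^*M \oplus V^* \oplus V\bigr)$ induced by $A \oplus \hat{A}$ (the last two summands being cotangent to the $V$ and $V^*$ directions respectively). One finds
\[
e^{-\mathcal{B}}\,p^*(Y,a,\alpha,\eta) = (Y,\,a,\,0;\ \eta,\,\alpha,\,-a), \qquad
\hat{p}^*\,\phi(Y,a,\alpha,\eta) = (Y,\,0,\,\alpha;\ \eta,\,0,\,a).
\]
Their difference has cotangent part $(0,\alpha,-2a)$, which is \emph{not} tangent to the fibres and does not vanish under any reduction by $K_p$ or $K_{\hat{p}}$. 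The underlying reason is that the naive lift $p^*$ puts $\alpha$ into the $p^*A$-slot of $T^*C$, a slot which $\hat{p}^*$ never touches and which $e^{-\mathcal{B}}$ cannot empty because $i_X\mathcal{B}$ has no $p^*A$-component when $X$ has no $\partial_{\hat{\theta}}$-component. A correct correspondence-space argument must instead realise $E^{{\rm red}}$ and $\hat{E}^{{\rm red}}$ as Courant reductions $K_p^\perp/K_p$ and $K_{\hat{p}}^\perp/K_{\hat{p}}$ and choose the section $E^{{\rm red}} \to K_p^\perp$ so that $e^{-\mathcal{B}}$ actually lands in $K_{\hat{p}}^\perp$; this forces the lift of $(Y,a,\alpha,\eta)$ to acquire a $\partial_{\hat{\theta}}$-component determined by $\alpha$, and one must then check that this particular lift respects brackets --- which is essentially the whole content of the theorem.

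The paper avoids all of this by a different and shorter route: it uses the T-duality transform $T$ on invariant \emph{forms} (already shown to intertwine $d_{\xi,H}$ and $d_{\hat{\xi},\hat{H}}$), checks the single identity $T(\gamma^{{\rm red}}_a\omega) = \hat{\gamma}^{{\rm red}}_{\phi(a)}(T\omega)$ for the Clifford action $\gamma_a\omega = i_Y\omega + \eta \wedge \omega$, and then reads off preservation of the bracket from the derived-bracket formula $\gamma_{[a,b]_H} = [[d_{\xi,H},\gamma_a],\gamma_b]$ and of the pairing from the Clifford relation $\gamma_a\gamma_b + \gamma_b\gamma_a = (a,b)$. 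This packages the correspondence-space content into the single map $T$ and sidesteps any explicit reduction on $C$.
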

\begin{proof}
Our proof is a straightforward adaptation of \cite[Theorem 7.2]{cav}. Recall that from the triple $(A,\hat{A},H_3)$ we may define a T-duality map $T : \Omega^*(X,\xi) \to \Omega^{*-n}(\hat{X},\hat{\xi})$ as given in (\ref{tdtdef}). Moreover $T$ is an isomorphism between invariant forms on $X$ and $\hat{X}$ and intertwines the twisted differentials $d_{\xi,H},d_{\hat{\xi},\hat{H}}$. We also established in Section \ref{tdttwico} that the invariant forms on $X,\hat{X}$ correspond to sections of the bundles $S = \mathbb{R}_\xi \otimes \wedge^* T^*M \otimes \wedge^* V^*$ and $\hat{S} = \mathbb{R}_{\hat{\xi}} \otimes \wedge^* T^*M \otimes \wedge^* V$ respectively. Under this identification the T-duality transform $T$ was simply a bundle isomorphism $T : S \to \hat{S}$.\\

There is a natural action of sections of $E = TX \oplus T^*X$ on $\Omega^*(X)$ given by a bundle map $\gamma : E \otimes \wedge^* T^*M \to \wedge^* T^*M$. Let $a = (Y,\eta)$ be a section of $TX \oplus T^*X$ and $\omega \in \Omega^*(X)$. Then
\begin{equation*}
\gamma_a \omega = i_Y \omega + \eta \wedge \omega.
\end{equation*}
If $L$ is any flat line bundle the above action can similarly be defined on $\Omega^*(X,L)$. If $a$ and $\omega$ are invariant then so is $\gamma_a(\omega)$, so we obtain a corresponding bundle map $\gamma^{{\rm red}} : E^{{\rm red}} \otimes S \to S$ and similarly we obtain $\hat{\gamma}^{{\rm red}} : \hat{E}^{{\rm red}} \otimes \hat{S} \to \hat{S}$.\\

A straightforward computation shows that
\begin{equation}\label{clifft}
T( \gamma^{{\rm red}}_a \omega ) = \hat{\gamma}^{{\rm red}}_{\phi(a)} (T \omega).
\end{equation}

The Dorfman bracket is a derived bracket \cite{kos} in the sense that for $a,b$ sections of $E$ and $\omega \in \Omega^*(X)$ we have
\begin{equation*}
\gamma_{[ a , b ]_H} \omega = [ [d_{\xi,H} , \gamma_a ] , \gamma_b ] \omega.
\end{equation*}
In the above equation we think of $d_{\xi,H}$ as an operator of even degree, $\gamma_a,\gamma_b$ as operators of odd degree and take graded commutators. Writing this out in full we have
\begin{equation*}
\gamma_{[a,b]_H} \omega = d_{\xi,H}( \gamma_a \gamma_b \omega) - \gamma_a d_{\xi,H}(\gamma_b \omega) - \gamma_b d_{\xi,H} (\gamma_a \omega) + \gamma_b \gamma_a d_{\xi,H} \omega.
\end{equation*}
A similar identity holds for the Courant algebroid $\hat{E}$ on $\hat{X}$. Let us now restrict to invariant sections. Using (\ref{clifft}) and the fact that $T$ intertwines differentials we find
\begin{eqnarray*}
\hat{\gamma}^{{\rm red}}_{\phi([a,b]^{{\rm red}}_H)} T \omega &=& T ( \gamma^{{\rm red}}_{[a,b]^{{\rm red}}_H} \omega) \\
&=& T ( [ [d_{\xi,H} , \gamma^{{\rm red}}_a ] , \gamma^{{\rm red}}_b ] \omega )\\
&=& [ [d_{\hat{\xi},\hat{H}} , \hat{\gamma}^{{\rm red}}_{\phi(a)} ] , \hat{\gamma}^{{\rm red}}_{\phi{b}} ] (T\omega) \\
&=& \hat{\gamma}^{{\rm red}}_{ [\phi{a},\phi{b}]^{{\rm red}}_{\hat{H}} } ( T \omega ),
\end{eqnarray*}
which reduces to simply
\begin{equation*}
\phi( [a,b]^{{\rm red}}_H ) = [ \phi(a) , \phi(b) ]^{{\rm red}}_{\hat{H}}.
\end{equation*}

We have shown that $\phi$ exchanges the Courant brackets on $E^{{\rm red}}, \hat{E}^{{\rm red}}$. To complete the proof one also needs to check that $\phi$ exchanges the anchors and bilinear forms. The exchange of the anchors is immediate. For the bilinear forms one need only note the following identity:
\begin{equation*}
(\gamma_a \gamma_b + \gamma_b \gamma_a)\omega = (a,b)\omega,
\end{equation*}
where $a,b$ are sections of $E$ and $\omega$ a form on $X$. A similar identity holds on $\hat{X}$. Applying $T$ and again using (\ref{clifft}), we see that $\phi$ indeed preserves the pairings.
\end{proof}


\section{Examples}\label{examps}

In the following examples we are concerned with computing some twisted $K$-theory groups for torus bundles with non-trivial monodromy. This can be used as a test of T-duality when the appropriate pair of groups can be computed by other means. On the other hand we can use T-duality to compute twisted $K$-theory groups not easily obtained by more elementary means. In the following examples all gerbes will have trivial gradings. The main tool for computing twisted $K$--theory groups $K^*(X,\mathcal{G})$ will be the Atiyah-Hirzebruch spectral sequence. According to \cite{atseg2}, we have that the $E_3$ stage has the form $H^p( X , K^q(pt))$ and $d_3 = {\rm Sq}^3_\mathbb{Z} - h\smallsmile$, where $h$ is the (undgraded) Dixmier-Douady class and ${\rm Sq}^3_\mathbb{Z}$ is the third integral Steenrod operation. The examples are of low enough dimension that the Steenrod operation will not appear.


\subsection{T-duality and the Euclidean algorithm}\label{tdea}

In this example we will show how one can compute a twisted $K$-theory group by repeated applications of T-duality in the fashion of the Euclidean algorithm. We take our base space space to be $M = T^2$ the $2$-torus. The fundamental group $\pi_1(M) = \mathbb{Z}^2$ with generators $x,y$ say. We define a representation $\rho : \pi_1(M) \to {\rm SL}(2,\mathbb{Z})$ as follows:
\begin{equation*}
\begin{aligned}
\rho(x) = \left[ \begin{matrix} 1 & m  \\ 0 & 1 \end{matrix} \right], & & \rho(y) = \left[ \begin{matrix} 1 & n \\ 0 & 1 \end{matrix} \right]
\end{aligned}
\end{equation*}
where $m,n \in \mathbb{Z}$ are positive integers with no common factor. Let $\Lambda_\rho$ be the $\mathbb{Z}^2$-valued local system determined by $\rho$. Then $\wedge^2 \Lambda_\rho \simeq \mathbb{Z}$ and $\Lambda^*_\rho \simeq \Lambda_\rho$. A straightforward computation reveals the cohomology of $M$ with local coefficients in $\Lambda_\rho$ to be as follows:
\begin{equation*}
\renewcommand{\arraystretch}{1.4}
\begin{tabular}{|l|l|}
\hline
$i$ & $H^i(M,\Lambda_\rho)$ \\
\hline
$0$ & $\mathbb{Z}$ \\
$1$ & $\mathbb{Z}_m \oplus \mathbb{Z}_n$ \\
$2$ & $\mathbb{Z}$ \\
\hline
\end{tabular}
\end{equation*}

Rank $2$ affine torus bundles over $M$ with monodromy $\rho$ are classified by their Chern class $j \in H^2(M,\Lambda_\rho) = \mathbb{Z}$. Let $X_j \to M$ be the corresponding torus bundle. We compute the integral cohomology of $X_j$ by means of the Leray-Serre spectral sequence and Poincar\'e duality, noting that $X_j$ is oriented. We find
\begin{equation*}
\renewcommand{\arraystretch}{1.4}
\begin{tabular}{|l|l|l|}
\hline
$i$ & $H^i(X_j,\mathbb{Z})$, $j=0$ & $H^i(X_j,\mathbb{Z})$, $j \neq 0$ \\
\hline
$0$ & $\mathbb{Z}$ & $\mathbb{Z}$ \\
$1$ & $\mathbb{Z}^3$ & $\mathbb{Z}^2$ \\
$2$ & $\mathbb{Z}^2$ & $\mathbb{Z}_j$ \\
$3$ & $\mathbb{Z}^3$ & $\mathbb{Z}^2 \oplus \mathbb{Z}_j$ \\
$4$ & $\mathbb{Z}$ & $\mathbb{Z}$ \\
\hline
\end{tabular}
\end{equation*}
We can also work out the subgroup $F^{2,3}(\pi,\mathbb{Z})$ of $H^3(X_j,\mathbb{Z})$ of T-dualizable flux. In the case $j=0$ it is given by an inclusion $(1,0) : \mathbb{Z} \to \mathbb{Z} \oplus \mathbb{Z}^2$ while for $j \neq 0$ it is the subgroup $\mathbb{Z}_j \subset \mathbb{Z}^2 \oplus \mathbb{Z}_j$. The projection $E_2^{2,1}(\pi,\mathbb{Z}) \to E_\infty^{2,1}(\pi,\mathbb{Z}) = F^{2,3}(\pi,\mathbb{Z})/F^{3,3}(\pi,\mathbb{Z})$ is given by $\mathbb{Z} \to \mathbb{Z}$ for $j=0$ and $\mathbb{Z} \to \mathbb{Z}_j$ for $j \neq 0$. Let $k \in \mathbb{Z} \simeq E^{2,1}_2(\pi,\mathbb{Z})$ represent the flux on $X_j$. Since $H^3(M,\mathbb{Z}) = 0$ we find that $F^{2,3}(\pi,\mathbb{Z}) = E_\infty^{2,1}(\pi,\mathbb{Z})$ so that the integer $k$ completely determines a flux $h_k \in H^3(X_j,\mathbb{Z})$. If $j \neq 0$ then $h_k = h_{k+j}$. The class $k$ determines an affine torus bundle $\hat{X}_k \to M$ with monodromy $\Lambda^*_\rho$ and Chern class $k \in H^2(M,\Lambda^*_\rho)$. Since $\Lambda_\rho$ is self-dual the space $\hat{X}_k$ is identical to $X_k$. The class $j$ likewise determines a flux $\hat{h}_j \in H^3(\hat{X}_k , \mathbb{Z})$ which under $\hat{X}_k \simeq X_k$ just becomes $h_j \in H^3(X_k,\mathbb{Z})$. If we write the Chern class $j$ and flux $k$ as a pair $(j,k) \in \mathbb{Z}^2$ then T-duality in this case is simply the interchange $(j,k) \to (k,j)$.\\

Now we compute the twisted $K$-theory groups $K^i(X_j , h_k)$ for all pairs $(j,k)$. We can use the Atiyah-Hirzebruch spectral sequence to compute the groups, but this does not yield a complete solution since the spectral sequence only computes the $K$-theory up to an extension problem. To the extent that we can directly solve the extension problem we find
\begin{equation*}
\renewcommand{\arraystretch}{1.4}
\begin{tabular}{|l|l|l|l|l|}
\hline
$i$ & $K^i(X_j,h_k)$ & $K^i(X_j,h_k)$ & $K^i(X_j,h_k)$ & $K^i(X_j,h_k)$ \\
& $j=0, k =0$ & $j=0,k \neq 0$ & $j \neq 0, k = 0\, ({\rm mod} \, j)$ & $j \neq 0, k \neq 0 \, ({\rm mod} \, j)$ \\
\hline
$0$ & $\mathbb{Z}^4$ & $\mathbb{Z}^2 \oplus \mathbb{Z}_k$ & $*$ & $*$ \\
$1$ & $\mathbb{Z}^6$ & $\mathbb{Z}^4 \oplus \mathbb{Z}_k$ & $\mathbb{Z}^4 \oplus \mathbb{Z}_j$ & $\mathbb{Z}^4 \oplus \mathbb{Z}_d$ \\
\hline
\end{tabular}
\end{equation*}
where $*$ denotes a term that is not completely determined due to an extension problem and $d$ is the greatest common divisor of $j$ and $k$. We use T-duality to determine the remaining terms. First the case of $K^0(X_j,h_k)$ with $j \neq 0, k = 0\, ({\rm mod} \, j)$ is easily computed by the T-duality interchange $(j,0) \to (0,j)$ and thus $K^0(E_j) = K^0(X_0 , h_j) = \mathbb{Z}^2 \oplus \mathbb{Z}_j$.

The computation of $K^0(X_j,h_k)$ with $j \neq 0, k \neq 0 \, ({\rm mod} \, j)$ is more subtle. We make use of the fact that twisted $K$-theory is unchanged under the T-duality exchange $(j,k) \to (k,j)$ and is unchanged under shifts $(j,k) \to (j,k+j)$, which follows simply from the identity $h_{k+j} = h_k$. By repeated application of T-duality swaps $(p,q) \to (q,p)$ and shifts $(p,q) \to (p,q+p)$ we can run the Euclidean algorithm to reduce the pair $(j,k)$ down to $(d,0)$ where $d$ as above is the greatest common divisor of $j$ and $k$. So we obtain $K^0(X_j,h_k) = \mathbb{Z}^2 \oplus \mathbb{Z}_d$ in this case. The complete table is thus
\begin{equation*}
\renewcommand{\arraystretch}{1.4}
\begin{tabular}{|l|l|l|l|l|}
\hline
$i$ & $K^i(X_j,h_k)$ & $K^i(X_j,h_k)$ & $K^i(X_j,h_k)$ & $K^i(X_j,h_k)$ \\
& $j=0, k =0$ & $j=0,k \neq 0$ & $j \neq 0, k = 0\, ({\rm mod} \, j)$ & $j \neq 0, k \neq 0 \, ({\rm mod} \, j)$ \\
\hline
$0$ & $\mathbb{Z}^4$ & $\mathbb{Z}^2 \oplus \mathbb{Z}_k$ & $\mathbb{Z}^2 \oplus \mathbb{Z}_j$ & $\mathbb{Z}^2 \oplus \mathbb{Z}_d$ \\
$1$ & $\mathbb{Z}^6$ & $\mathbb{Z}^4 \oplus \mathbb{Z}_k$ & $\mathbb{Z}^4 \oplus \mathbb{Z}_j$ & $\mathbb{Z}^4 \oplus \mathbb{Z}_d$ \\
\hline
\end{tabular}
\end{equation*}

This example demonstrates clearly that repeated applications of T-duality can yield stronger results than is possible through a single T-duality. This phenomenon is due to the non-uniqueness of T-duals, since if T-duals were unique two applications of T-duality would take us back to the starting point.


\subsection{Example with three dimensional base}

Here is an example we found of T-duality on a $3$-dimensional base where the monodromy is non-trivial and in which the twisted $K$-theory could easily be computed.\\

Let $M$ be the non-trivial $S^2$-bundle over $S^1$ obtained by attaching the ends of $S^2 \times [0,1]$ by the antipodal map. The fundamental group of $M$ is $\pi_1(M) = \mathbb{Z}$. Let $x \in \pi_1(M)$ be a generator, we define a monodromy representation $\rho$ by setting
\begin{equation*}
\rho(x) = \left[ \begin{matrix} -1 & -1 \\ 0 & -1 \end{matrix} \right]
\end{equation*}
let $\Lambda = \mathbb{Z}^2$ and $\Lambda_\rho$ the corresponding local system. Note then that $\wedge^2 \Lambda_\rho \simeq \mathbb{Z}$ and $\Lambda_\rho^* \simeq \Lambda_\rho$. The orientation local system $\mathbb{Z}_{\rm orn}$ for $M$ is given by letting $x$ act as $-1$. We compute the cohomology of $M$ with coefficients in $\mathbb{Z}$ and $\Lambda_\rho$:
\begin{equation*}
\renewcommand{\arraystretch}{1.4}
\begin{tabular}{|l|l|l|}
\hline
$i$ & $H^i(M,\mathbb{Z})$ & $H^1(M,\Lambda_\rho)$ \\
\hline
$0$ & $\mathbb{Z}$ & $0$ \\
$1$ & $\mathbb{Z}$ & $\mathbb{Z}_4$ \\
$2$ & $0$ & $\mathbb{Z}$ \\
$3$ & $\mathbb{Z}_2$ & $\mathbb{Z}$ \\
\hline
\end{tabular}
\end{equation*}
For $j \in H^2(M,\Lambda_\rho) \simeq \mathbb{Z}$ let $\pi : X_j \to M$ be the corresponding affine torus bundle. As the first step towards computing the twisted $K$-theory of $X_j$ we first compute the integral cohomology with the Leray-Serre spectral sequence. There are some difficulties in carrying this out, specifically we do not have a prescription for the third differential which might be non-trivial here. To proceed with the computation we restrict to the case that $j$ is odd since in this case the only possibly non-trivial third differential $d_3 : E_3^{0,2}(\pi,\mathbb{Z}) \to E_3^{3,0}(\pi,\mathbb{Z})$ vanishes by the simple reason that $E_3^{3,0} = 0$ when $j$ is odd. The cohomology of $X_j$ is then found to be
\begin{equation*}
\renewcommand{\arraystretch}{1.4}
\begin{tabular}{|l|l|}
\hline
$i$ & $H^i(X_j,\mathbb{Z})$ \\
\hline
$0$ & $\mathbb{Z}$ \\
$1$ & $\mathbb{Z}$ \\
$2$ & $0$ \\
$3$ & $\mathbb{Z}_j$ \\
$4$ & $\mathbb{Z}_j$ \\
$5$ & $\mathbb{Z}_2$ \\
\hline
\end{tabular}
\end{equation*}
It turns out that $F^{2,3}(\pi,\mathbb{Z}) = H^3(X_j,\mathbb{Z})$, so that all the flux is T-dualizable in this case. We also find that $F^{3,3}(\pi,\mathbb{Z}) = 0$ so that $F^{2,3}(\pi,\mathbb{Z}) = E_\infty^{2,1}(\pi,\mathbb{Z}) = \mathbb{Z}_j$, while $E_2^{2,1}(\pi,\mathbb{Z}) = \mathbb{Z}$. Let $k \in \mathbb{Z} \simeq E_2^{2,1}(\pi,\mathbb{Z})$. So $k$ projects to a class $h_k \in H^3(X_j,\mathbb{Z})$ and we observe that $h_{k+j} = h_k$. If we restrict to the case that $k$ is odd then we know that T-duality is realized by the interchange $(j,k) \to (k,j)$. Let $d$ be the greatest common divisor of $j$ and $k$. The twisted $K$-theory groups can be computed from the Atiyah-Hirzebruch spectral sequence. We find
\begin{equation*}
\renewcommand{\arraystretch}{1.4}
\begin{tabular}{|l|l|}
\hline
$i$ & $K^i(X_j,h_k)$ \\
\hline
$0$ & $\mathbb{Z} \oplus \mathbb{Z}_2 \oplus \mathbb{Z}_d$ \\
$1$ & $\mathbb{Z} \oplus \mathbb{Z}_d$ \\
\hline
\end{tabular}
\end{equation*}
which is easily seen to be symmetrical under the T-duality exchange $(j,k) \to (k,j)$.


\appendix

\section{Twisted $K$-theory}\label{grgtwkt}

Here we summarize the properties of twisted $K$-theory with graded gerbes used in the paper. Most of the results are adapted from references such as \cite{fht},\cite{cw0},\cite{cw},\cite{bcmms},\cite{atseg1}. For a space $X$ and graded bundle gerbe $\mathcal{G}$ on $X$ we denote by $K^i(X,\mathcal{G})$ the corresponding twisted $K$-theory groups, where $i \in \mathbb{Z}_2$.\\

The definition of twisted $K$-theory we use is essentially that of \cite{fht}, with the exception that we use a compactly supported version as done in \cite{cw0},\cite{cw}. To be exact, the definition we are using is that of our previous paper \cite{bar}.\\

If $\mathcal{G},\mathcal{H}$ are stably isomorphic graded gerbes then $K^*(X,\mathcal{G})$ and $K^*(X,\mathcal{H})$ are isomorphic. Therefore the groups $K^*(X,\mathcal{G})$ depend only on the Dixmier-Douady class $[\mathcal{G}] = (\xi,h) \in H^1(X,\mathbb{Z}_2) \times H^3(X,\mathbb{Z})$ of $\mathcal{G}$. However one should take care to note that the isomorphisms $K^*(X,\mathcal{G}) \simeq K^*(X,\mathcal{H})$ are {\em non-canonical}. Different stable isomorphisms $\mathcal{G} \to \mathcal{H}$ generally induce different isomorphisms $K^*(X,\mathcal{G}) \to K^*(X,\mathcal{H})$. One way to take this into account is to think of twisted $K$-theory as a functor
\begin{equation*}
K^*( X , \; \; ) : GrGrb(X) \to Gr_{\mathbb{Z}_2}{\rm Ab},
\end{equation*}
where $Gr_{\mathbb{Z}_2}{\rm Ab}$ is the category of $\mathbb{Z}_2$-graded abelian groups and $GrGrb(X)$ is the category of gerbes and equivalence classes of stable isomorphisms described in Section \ref{ggrb}. We also note that if $\mathcal{G} = 1$ is the trivial gerbe, then $K^*(X,\mathcal{G}) = K^*(X)$ is ordinary topological $K$-theory.\\

We now review the main properties of twisted $K$-theory, some of which were described in Section \ref{tdttkt}. There is a product structure $K^i(X,\mathcal{G}) \otimes K^j(X,\mathcal{H}) \to K^{i+j}(X,\mathcal{G} \otimes \mathcal{H})$ which is graded commutative and associative. When $\mathcal{G},\mathcal{H}$ are the trivial gerbe this just becomes the usual product in $K$-theory.

When $X$ is compact, a graded line bundle $L \to X$ defines a class $[L] \in K^0(X)$. Let $\alpha,\beta : \mathcal{G} \to \mathcal{H}$ be two stable isomorphisms which differ by $L$. Then the induced maps $\alpha,\beta : K^*(X,\mathcal{G}) \to K^*(X,\mathcal{H})$ differ by multiplication by $L$, $ [L] \otimes : K^*(X,\mathcal{H}) \to K^*(X,\mathcal{H})$, meaning we have a commutative diagram of the form
\begin{equation*}\xymatrix{
K^*(X,\mathcal{G}) \ar[r]^-{\alpha} \ar[dr]^-{\beta} & K^*(X,\mathcal{H}) \ar[d]^{ [L] \otimes } \\
& K^*(X,\mathcal{H})
}
\end{equation*}
When $X$ is not compact two stable isomorphisms $\alpha,\beta$ which differ by a graded line bundle $L$ still give rise to a commutative diagram as above but now we have to think of $L$ as an element of non-compactly supported $K$-theory. At any rate what matters here is that the group ${\rm Pic}(X)$ of graded line bundles on $X$ acts on the twisted $K$-theory groups $K^*(X,\mathcal{H})$ by automorphisms.\\

For any proper map $\phi : Y \to X$ there are pullback maps $\phi^* : K^*(X,\mathcal{G}) \to K^*(Y,\phi^*(\mathcal{G}))$. Recall that there is also a pullback functor $\phi^* : GrGrb(X) \to GrGrb(Y)$. The pullback in twisted $K$-theory is then a natural transformation $\phi^* : K^*(X , \; \; ) \Rightarrow K^*(Y, \; \;) \circ \phi^*$. The pullback in twisted $K$-theory is contravariant in the evident sense.\\

For any smooth map $\phi : Y \to X$ there is a pushforward \cite{fht},\cite{cw0},\cite{cw} $\phi_* : K^i(Y,\phi^*(\mathcal{G}) \otimes L(\phi)) \to K^{i-n}(Y,\mathcal{G})$ where $L(\phi)$ is the lifting gerbe of $TY \oplus \phi^*(TX)$ and $n$ is the rank of $TY \oplus \phi^*(TX)$. In the case where $\phi : Y \to X$ is a fiber bundle we can replace $L(\phi)$ by $L(V)$ where $V = {\rm Ker}(\phi_*)$ is the vertical tangent bundle \cite{bar}. Another special case of the pushforward is when $Y$ is an open subset of $X$ and $\phi$ is inclusion. In this case the lifting gerbe for $TY \oplus \phi^*(TY)$ is canonically trivial and $\phi_* : K^*(Y,\phi^*\mathcal{G}) \to K^*(X,\mathcal{G})$ is given as follows: if we describe twisted $K$-theory in terms of homotopy classes of compactly supported sections of certain bundles of Fredholm operators, then $\phi_*$ here represents trivially extending sections on $Y$ to sections on $X$.\\

There is a Mayer-Vietoris sequence in twisted $K$-theory \cite{fht},\cite{cw0}. If $U_1,U_2$ are open sets covering $X$ and $\mathcal{G}$ is graded gerbe on $X$, there is a natural long exact sequence
\begin{equation*}\xymatrix{
K^0(X,\mathcal{G}) \ar[d] & K^0(U_1,\mathcal{G}) \oplus K^0(U_2,\mathcal{G}) \ar[l] & K^0(U_1 \cap U_2 , \mathcal{G}) \ar[l] \\
K^1(U_1 \cap U_2,\mathcal{G}) \ar[r] & K^1(U_1,\mathcal{G}) \oplus K^1(U_2,\mathcal{G}) \ar[r] & K^1(X,\mathcal{G}) \ar[u]
}
\end{equation*}
where $\mathcal{G}$ defines twists on $U_1,U_2, U_1 \cap U_2$ by restriction. The Mayer-vietoris sequence is natural is the sense that pullback, pushforward and isomorphism of twists yield commuting Mayer-Vietoris sequences.\\

The last property of twisted $K$-theory we need concerns the twisted Chern character. Thus we suppose that $\mathcal{G}$ is a graded gerbe on $X$ and we choose a connection and curving for $\mathcal{G}$. Letting $\xi \in H^1(X,\mathbb{Z}_2)$ denote the grading class and $H \in \Omega^3(X)$ the curvature, the twisted Chern character is a map $Ch_{\mathcal{G}} : K^*(X,\mathcal{G}) \to H^*(X,(\xi,H))$. When $X$ is non-compact one needs to use compactly supported twisted twisted cohomology. For simplicity, we restrict attention to the case that $X$ is compact. The twisted Chern character in the case of $K$-theory twisted by ungraded gerbes is defined in \cite{bcmms}, \cite{ms}. We showed in \cite{bar} that the definition can easily be extended to graded bundle gerbes. When $X$ is compact, the twisted Chern character yields an isomorphism $Ch_{\mathcal{G}} : K^*(X,\mathcal{G}) \otimes \mathbb{R} \to H^*(X,(\xi,h))$ \cite{fht2}.

In Section \ref{sstdt} we explained that every lifting gerbe $L(V)$ admits a canonical flat connection. Using this one can give a version of the Riemann-Roch formula in twisted $K$-theory:
\begin{theorem}[Riemann-Roch formula for graded bundle gerbes]\label{rrgbg} For any $X,Y$, map $f: X \to Y$ and graded gerbe $\mathcal{G}$ with connection and curving, we have for $a \in K^*(X,L(f)\otimes f^*\mathcal{G})$:
\begin{equation*}
Ch_{\mathcal{G}}( f_* a ) \hat{A}(Y) = f_* ( Ch_{L(f)\otimes f^*\mathcal{G}} (a) \hat{A}(X) ).
\end{equation*}
\end{theorem}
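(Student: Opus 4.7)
The strategy is the classical Grothendieck factorization argument adapted to the graded-gerbe setting, reducing the general statement to the cases of a closed embedding and a trivial projection. I would first factor $f: X \to Y$ as a closed embedding $i: X \hookrightarrow Y \times \mathbb{R}^N$ (for $N$ large) followed by the projection $p: Y \times \mathbb{R}^N \to Y$. The compatibility needed is a canonical stable isomorphism of graded gerbes $L(f) \simeq L(i) \otimes i^* L(p)$, which follows from multiplicativity of lifting gerbes under Whitney sums ($L(V \oplus W) \simeq L(V) \otimes L(W)$) applied to the short exact sequences of tangent bundles along the factorization, together with the self-cancelling identity $L(V) \otimes L(V) \simeq L(V \oplus V) \simeq 1$ (since $V \oplus V$ carries a canonical complex structure, so the lifting gerbe is canonically trivial). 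Combining this with functoriality of pushforward in twisted $K$-theory and twisted cohomology, naturality of the twisted Chern character, and multiplicativity of $\hat{A}$ under Whitney sums reduces the general statement to the two special cases.

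The projection case is essentially immediate: $T(Y \times \mathbb{R}^N) \oplus p^* TY$ carries a canonical stable complex structure so $L(p)$ admits a canonical trivialization, $\hat{A}$ of the fiber $\mathbb{R}^N$ equals $1$, the pushforward is compactly supported fiber integration, and RR becomes a form-level identity checked directly using the local description of the twisted Chern character. For the closed embedding $i$ with normal bundle $\nu$ we have a canonical stable isomorphism $L(i) \simeq L(\nu)$; upon choosing a tubular neighborhood, $i_*$ becomes the $L(\nu)$-twisted Thom isomorphism followed by extension by zero. The entire content of RR for $i$ thus reduces to the single identity
\begin{equation*}
Ch_{L(\nu)}(\tau_\nu) \;=\; \hat{A}(\nu)^{-1} \, u_\nu,
\end{equation*}
where $\tau_\nu \in K^{\mathrm{rk}\,\nu}(\nu, L(\nu))$ is the twisted Thom class and $u_\nu$ is the $\mathbb{R}_{w_1(\nu)}$-valued de Rham Thom class, combined with the twisted projection formula (which is immediate from the definitions of wedge product and twisted differential).

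The main obstacle is this identity for the twisted Chern character of the Thom class when $\nu$ is not orientable. My plan, in the spirit of Carey and Wang \cite{cw0},\cite{cw}, is to stabilize by a trivial rank-$k$ bundle so that $\nu \oplus \underline{\mathbb{R}}^k$ is the real bundle underlying a complex bundle $E$; the stable isomorphism $L(\nu) \otimes L(\underline{\mathbb{R}}^k) \simeq L(E)$ together with the canonical flat trivialization of $L(\underline{\mathbb{R}}^k)$ then identifies the $L(\nu)$-twisted Thom class with a suspension shift of the classical complex Thom class of $E$. The classical Atiyah--Hirzebruch computation $Ch(\tau_E) = \mathrm{Todd}(E)^{-1} u_E = \hat{A}(E)^{-1} e^{-c_1(E)/2} u_E$ then yields the required identity, with the $e^{-c_1(E)/2}$ factor absorbed exactly by the curving of the natural connection on $L(\nu)$ induced from the complex structure on $E$. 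Verifying independence of the stabilization (by comparing different complex lifts using the canonical isomorphism $L(V) \otimes L(V) \simeq 1$) and carefully tracking the grading shift under suspension constitute the bulk of the technical work.
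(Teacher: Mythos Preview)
The paper does not give its own proof of this theorem. Immediately after stating it, the text says that the ungraded, oriented case is proved in \cite{cw},\cite{cmw} and that the extension to graded gerbes and non-oriented maps was carried out in the author's earlier paper \cite{bar}. Moreover, the paper remarks that only the special case where $f$ is a fiber bundle with compact fibers is actually used (so that the cohomological pushforward is just fiber integration).

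Your proposal follows the Carey--Wang strategy that the paper cites, so in spirit it is the intended argument. There is, however, a genuine gap in your stabilization step. You write ``stabilize by a trivial rank-$k$ bundle so that $\nu \oplus \underline{\mathbb{R}}^k$ is the real bundle underlying a complex bundle $E$,'' but this is impossible precisely in the non-orientable situation you are trying to handle: adding trivial summands does not change $w_1$, and a real bundle underlying a complex bundle is orientable. So if $w_1(\nu) \neq 0$ --- the case where the graded twist is doing real work --- no such $E$ exists. The fix is either to add a \emph{complementary} bundle $\nu'$ with $\nu \oplus \nu' \cong \underline{\mathbb{R}}^{2N}$ (then track the extra lifting gerbe $L(\nu')$ through the computation, which is where the graded bookkeeping becomes nontrivial), or to pass to the orientation double cover and descend, or to argue universally on $BO(n)$. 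Any of these routes works, but the absorption of the $e^{-c_1}$-type correction into the curving then has to be stated in terms of that auxiliary data rather than an intrinsic complex structure on the stable normal bundle.
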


Note that for the above equation to make sense we first have to define a pushforward operation in compactly supported, twisted cohomology. However we do not need such generality. We need only the case where $f : X \to Y$ is a fiber bundle with compact fibers. In this case the pushforward can simply be defined as a fiber integration $ \int_{X/Y} : \Omega^*(X,f^*(H)) \to \Omega^{*-n}(Y,H)$. A proof of this formula for ungraded bundle gerbes and oriented spaces is given in \cite{cw}, \cite{cmw}. In \cite{bar} we showed how these assumptions could be dropped, giving the general form above.


\section{A \v{C}ech approach to the Leray spectral sequence}\label{cechapp}

Given paracompact spaces $Z,W$, a map $f : Z \to W$ and a sheaf $\mathcal{F}$ on $Z$ we have the Leray spectral sequence $( E_r^{p,q}(f,\mathcal{F}), d_r)$ with $E_2^{p,q}(f,\mathcal{F}) = H^p(W , R^q f_* \mathcal{F})$ which abuts to the sheaf cohomology $H^*(Z,\mathcal{F})$. We give a general approach to determining the differential $d_2$ in terms of \v{C}ech representatives and use this to prove Proposition \ref{obstruction}.\\

Let $\mathcal{U} = \{ U_i \}_{i \in A}$ be an open cover on $W$ and $\mathcal{V} = \{ V_j \}_{j \in B}$ an open cover on $Z$. Let us take the indexing sets $A,B$ to be strictly totally ordered. As usual for $i_0, i_1 , \dots , i_p \in A$ we let $U_{i_0 i_1 \dots i_p}$ denote the multiple intersection and similarly for $\mathcal{V}$. We define a double complex $(C^{p,q}(\mathcal{U},\mathcal{V},\mathcal{F}), \delta_{\mathcal{U}},\delta_{\mathcal{V}})$ as follows. We set
\begin{equation*}
C^{p,q}(\mathcal{U},\mathcal{V},\mathcal{F}) = \prod_{\substack{ i_0 < i_1 < \cdots < i_p \\ j_0 < j_1 < \cdots < j_q \\ f^{-1}(U_{i_0 \dots i_p) \cap V_{j_0 \dots j_q} \neq \emptyset}}} \mathcal{F}( f^{-1}(U_{i_0 i_1 \dots i_p}) \cap V_{j_0 j_1 \dots j_q}).
\end{equation*}
So an element of $C^{p,q}(\mathcal{U},\mathcal{V},\mathcal{F})$ is a collection $\{ s_{i_0 i_1 \dots i_p , j_0 j_1 \dots j_q} \}$ of sections of $\mathcal{F}$ defined on the non-empty intersections $f^{-1}(U_{i_0 i_1 \dots i_p}) \cap V_{j_0 j_1 \dots j_q}$. Note that we extend the notation $s_{i_0 \dots i_p, j_0 \dots j_q}$ to unordered indices by skew-symmetry in $i_0, \dots , i_p$ and $j_0, \dots , j_q$. Next we define the differentials $\delta_{\mathcal{U}} : C^{p,q}(\mathcal{U},\mathcal{V},\mathcal{F}) \to C^{p+1,q}(\mathcal{U},\mathcal{V},\mathcal{F})$ and $\delta_{\mathcal{V}} : C^{p,q}(\mathcal{U},\mathcal{V},\mathcal{F}) \to C^{p,q+1}(\mathcal{U},\mathcal{V},\mathcal{F})$ to be the \v{C}ech differentials in the horizontal and vertical directions:
\begin{eqnarray*}
(\delta_{\mathcal{U}} s)_{i_0 \dots i_{p+1} , j_0 \dots j_q} &=& \sum_{k=0}^{p+1} (-1)^k s_{i_0 \dots \hat{i}_k \dots i_{p+1}, j_0 \dots j_q}, \\
(\delta_{\mathcal{V}} s)_{i_0 \dots i_{p} , j_0 \dots j_{q+1}} &=& \sum_{k=0}^{q+1} (-1)^{p+k+1} s_{i_0 \dots i_p, j_0 \dots \hat{j}_k \dots j_{q+1}},
\end{eqnarray*}
where we have suppressed the restriction notation for sections of $\mathcal{F}$ and as usual $\hat{i}_k,\hat{j}_k$ denote omission of those indices. Clearly we have $\delta^2_{\mathcal{U}} = 0$, $\delta^2_{\mathcal{V}} = 0$, $\delta_{\mathcal{U}} \delta_{\mathcal{V}} + \delta_{\mathcal{V}} \delta_{\mathcal{U}} = 0$. Let $(C^*(\mathcal{U},\mathcal{V},\mathcal{F}),\delta_{\mathcal{U}}+\delta_{\mathcal{V}})$ be the associated single term complex and let $H^n(C^*(\mathcal{U},\mathcal{V},\mathcal{F}))$ denote the cohomology of this complex.

Using the filtration of the single term complex $C^*(\mathcal{U},\mathcal{V},\mathcal{F})$ by $p$-degree we get a spectral sequence $E_r^{p,q}(\mathcal{U},\mathcal{V},\mathcal{F})$ and a filtration $F^{p,n}(\mathcal{U},\mathcal{V},\mathcal{F})$ of the cohomology groups $H^n(C^*(\mathcal{U},\mathcal{V},\mathcal{F}))$. We would like to compare this spectral sequence to the Leray spectral sequence. First let us find a candidate for what a morphism between $E_2$-stages should look like. The $E_1$-stage for $C^*(\mathcal{U},\mathcal{V},\mathcal{F})$ is obtained by taking $\delta_{\mathcal{V}}$-cohomology. Thus
\begin{equation*}
E_1^{p,q}(\mathcal{U},\mathcal{V},\mathcal{F}) = \prod_{I = \{ i_0 \dots i_p \} } H^q( \mathcal{V}|_{f^{-1}(U_I)} , \mathcal{F}|_{f^{-1}(U_I)}).
\end{equation*}
In the above equation $I$ denotes a subset $I = \{i_0 \dots i_p\}$ of size $p+1$, $U_I = U_{i_0 \dots i_p}$ and $\mathcal{V}$ is the restriction of the open cover $\mathcal{V}$ to the subset $f^{-1}(U_I)$. Note that the assignment to an open subset $U \subseteq W$ of the \v{C}ech cohomology group $H^q( \mathcal{V}_{f^{-1}(U)} , \mathcal{F}|_{f^{-1}(U)} )$ defines a presheaf on $W$ which we denote by $\tilde{R}^q f_* \mathcal{F}$. Clearly the natural map $H^q( \mathcal{V}_{f^{-1}(U)} , \mathcal{F}|_{f^{-1}(U)} ) \to H^q( f^{-1}(U) , \mathcal{F}|_{f^{-1}(U)} )$ together with sheafification determines a map $\tilde{R}^q f_* \mathcal{F} \to R^q f_* \mathcal{F}$, where as usual $R^q f_* \mathcal{F}$ is the sheaf associated to the presheaf $U \mapsto H^q( f^{-1}(U) , \mathcal{F}|_{f^{-1}(U)} )$. Now to pass to the $E_2$-stage we take $\delta_{\mathcal{U}}$-cohomology:
\begin{equation*}
E_2^{p,q}(\mathcal{U},\mathcal{V},\mathcal{F}) = H^p( \mathcal{U} , \tilde{R}^q f_* \mathcal{F}),
\end{equation*}
meaning the \v{C}ech cohomology of the presheaf $\tilde{R}^q f_* \mathcal{F}$ with respect to the cover $\mathcal{U}$. Clearly there are natural maps
\begin{equation}\label{mapa}
a : H^p( \mathcal{U} , \tilde{R}^q f_* \mathcal{F}) \to H^p( W , R^q f_* \mathcal{F}).
\end{equation}
Observe that $H^p( W , R^q f_* \mathcal{F}) = E_2^{p,q}(f,\mathcal{F})$, the $E_2$-stage of the Leray spectral sequence for $\mathcal{F}$.
\begin{theorem}\label{mss}
There is a morphism of spectral sequences $E_r^{p,q}(\mathcal{U},\mathcal{V},\mathcal{F}) \to E_r^{p,q}(f,\mathcal{F})$ which at the $E_2$-stage is given by the maps $a$ in (\ref{mapa}).
\end{theorem}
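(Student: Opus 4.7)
The approach is to realize the Leray spectral sequence via an auxiliary Čech bicomplex on $W$ built from a flabby resolution, and then construct a morphism from $C^{*,*}(\mathcal{U},\mathcal{V},\mathcal{F})$ into that auxiliary bicomplex. First I take a Godement (canonical flabby) resolution $\mathcal{F} \to G^\bullet$ on $Z$. Since $f_*$ preserves flabbiness and flabby sheaves are \v{C}ech-acyclic on paracompact spaces, the bicomplex $D^{p,q} = C^p(\mathcal{U}, f_* G^q)$ has two key properties: filtering first in $q$ shows its total complex computes $H^*(Z,\mathcal{F})$, while the $p$-filtration spectral sequence agrees from the $E_2$ page onwards with the Leray spectral sequence, with $E_2^{p,q}(D) = H^p(W, R^q f_* \mathcal{F})$.

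Next I construct a morphism of bicomplexes $\phi : C^{p,q}(\mathcal{U},\mathcal{V},\mathcal{F}) \to D^{p,q}$. For each multi-index $I = \{i_0 < \cdots < i_p\}$ from $\mathcal{U}$, the required component map
\[ C^q(\mathcal{V}|_{f^{-1}(U_I)}, \mathcal{F}|_{f^{-1}(U_I)}) \longrightarrow \Gamma(f^{-1}(U_I), G^q) \]
is provided by the classical comparison of \v{C}ech and sheaf cohomology: compose the augmentation $\mathcal{F} \hookrightarrow G^0$ with the canonical chain homotopy equivalence $C^\bullet(\mathcal{V}|_{f^{-1}(U_I)}, G^\bullet) \to \Gamma(f^{-1}(U_I), G^\bullet)$, which exists because each $G^q$ is flabby and hence \v{C}ech-acyclic on any open subset. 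Assembling these component maps and using naturality of the Godement construction in the open set $U_I$, the resulting $\phi$ strictly commutes with $\delta_{\mathcal{V}}$ by construction and with $\delta_{\mathcal{U}}$ by naturality.

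A morphism of bicomplexes induces a morphism of the associated $p$-filtration spectral sequences, so it suffices to identify the induced map at the $E_2$-page. Taking $q$-cohomology first, the $E_1$-page of the source is $\prod_I \tilde R^q f_* \mathcal{F}(U_I)$ while that of the target is $\prod_I R^q f_* \mathcal{F}(U_I)$; the map induced by $\phi$ is precisely the presheaf sheafification composed with the canonical comparison, since this is exactly the \v{C}ech-to-sheaf comparison map. Passing to $\delta_{\mathcal{U}}$-cohomology yields the composition
\[ \check H^p(\mathcal{U}, \tilde R^q f_* \mathcal{F}) \longrightarrow \check H^p(\mathcal{U}, R^q f_*\mathcal{F}) \longrightarrow H^p(W, R^q f_* \mathcal{F}), \]
which coincides with the map $a$ of equation (\ref{mapa}).

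The main technical obstacle is ensuring that the chain-level homotopy equivalences $C^\bullet(\mathcal{V}|_{f^{-1}(U_I)}, G^q) \to \Gamma(f^{-1}(U_I), G^q)$ can be chosen simultaneously for all $I$ so that $\phi$ genuinely strictly commutes with $\delta_{\mathcal{U}}$, rather than only commuting up to chain homotopy. Arbitrary choices of contracting homotopy (for instance via partitions of unity) will generally fail this compatibility. The cleanest way around this is to exploit the canonicity of the Godement construction: the map into $G^\bullet$ is defined stalkwise and the comparison with \v{C}ech is itself functorial in the open set, which forces the required strict compatibility. As a fallback one can bypass the strict bicomplex morphism entirely and invoke an $E_1$-level comparison theorem for spectral sequences, since the $E_1$-level map is forced by the universal property of sheafification.
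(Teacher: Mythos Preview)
Your auxiliary bicomplex $D^{p,q} = C^p(\mathcal{U}, f_* G^q)$ does not carry the Leray spectral sequence, so the argument has a genuine gap. Computing the $p$-filtration spectral sequence of $D$: since each $G^q$ is flabby, the $E_1$-page is
\[
E_1^{p,q}(D) \;=\; \prod_{|I|=p+1} H^q\bigl(f^{-1}(U_I),\mathcal{F}\bigr),
\]
which is the \v{C}ech $p$-cochain group for the \emph{presheaf} $U \mapsto H^q(f^{-1}(U),\mathcal{F})$, not for the sheaf $R^q f_*\mathcal{F}$. Consequently $E_2^{p,q}(D) = \check H^p(\mathcal{U}, U \mapsto H^q(f^{-1}(U),\mathcal{F}))$, and for a fixed cover $\mathcal{U}$ this is generally neither $\check H^p(\mathcal{U}, R^q f_*\mathcal{F})$ nor $H^p(W, R^q f_*\mathcal{F})$. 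So while the $q$-filtration argument (using flabbiness of $f_*G^q$ and paracompactness of $W$) does show that the total complex of $D$ computes $H^*(Z,\mathcal{F})$, the $p$-filtration spectral sequence is an intermediate object sitting strictly between $E_r(\mathcal{U},\mathcal{V},\mathcal{F})$ and $E_r(f,\mathcal{F})$. Your claimed identification $E_2^{p,q}(D) = H^p(W, R^q f_*\mathcal{F})$ is where the error lies; the same slip appears in your $E_1$-level description, where you wrote $\prod_I R^q f_*\mathcal{F}(U_I)$ for what is really $\prod_I H^q(f^{-1}(U_I),\mathcal{F})$.

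What is missing is precisely a second comparison step $D \to (\text{something giving Leray})$. The paper handles this by replacing your Godement resolution with an injective resolution $I^\bullet$ and then taking a Cartan--Eilenberg resolution $J^{\bullet,\bullet}$ of the complex $f_*I^\bullet$; the spectral sequence of $\Gamma(J^{\bullet,\bullet})$ is then the Leray spectral sequence by definition, and comparison-of-resolutions lemmas furnish the required morphism of bicomplexes. Your map $C^{p,q}(\mathcal{U},\mathcal{V},\mathcal{F}) \to D^{p,q}$ is a reasonable first half, but you still need to map $D$ onward into a Cartan--Eilenberg (or equivalent) model to reach the genuine Leray spectral sequence; appealing to canonicity of Godement does not by itself supply that second step.
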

\begin{proof}
Let $\mathcal{F} \to C^0 \to C^1 \to \dots $ denote the \v{C}ech resolution \cite{voi1} of $\mathcal{F}$ with respect to the cover $\mathcal{V}$ and let $\mathcal{F} \to I^0 \to I^1 \to \dots$ be an injective resolution. Then according to \cite[Proposition 4.27]{voi1}, we can find a morphism $c^q : C^q \to I^q$ of complexes such that the diagram
\begin{equation*}\xymatrix{
\mathcal{F} \ar[d]^{id} \ar[r] & C^0 \ar[d]^{c^0} \\
\mathcal{F} \ar[r] & I^0
}
\end{equation*}
commutes. Applying $f_*$ we get a morphism of complexes $f_*(C^q) \to f_*(I^q)$. For each $q$ let $f_*(C^q) \to D^{0,q} \to D^{1,q} \to \dots $ be the \v{C}ech resolution of $f_*(C^q)$ with respect to the cover $\mathcal{U}$. Then $D^{p,q}$ is a double complex by the functorial nature of \v{C}ech resolutions. Let $f_*(I^q) \to J^{0,q} \to J^{1,q} \to \dots $ be a Cartan-Eilenberg resolution \cite{gm} of the complex $f_*(I^q)$. Then by \cite[Proposition 4.3]{voi2} we can find a morphism $d^{p,q} : D^{p,q} \to J^{p,q}$ of complexes such that the diagram
\begin{equation*}\xymatrix{
f_*(C^q) \ar[d]^{c^q} \ar[r] & D^{0,q} \ar[d]^{d^{0,q}} \\
f_*(I^q) \ar[r] & J^{0,q}
}
\end{equation*}
commutes. Let $\Gamma$ denote the global sections functor for sheaves. We have a morphism of complexes $\Gamma( D^{p,q}) \to \Gamma( J^{p,q})$. Let $D^*,J^*$ be the associated single complexes. We get filtrations on $\Gamma(D^*),\Gamma(J^*)$ by $p$-degree and the morphism $\Gamma(D^*) \to \Gamma(J^*)$ preserve the filtration so defines a morphism $E_r^{p,q}(D^*) \to E_r^{p,q}(J^*)$ between the associated spectral sequences. We observe that the spectral sequence associated to the filtration on $\Gamma(J^*)$ is precisely the Leray spectral sequence. Actually the spectral sequence $E_r^{p,q}(J^*)$ is only independent of the choice of resolutions $I^*,J^{**}$ from the $E_2$-stage onwards, so the Leray spectral sequence really begins with $E_2^{p,q}(J^*)$. Thus $E_r^{p,q}(J^*) = E_r^{p,q}(f,\mathcal{F})$ for $r \ge 2$.\\

A closer look at the construction of the double complex $\Gamma(D^{p,q})$ reveals that in fact $\Gamma(D^{p,q})$ is the double complex $(C^{p,q}(\mathcal{U},\mathcal{V},\mathcal{F}),\delta_{\mathcal{U}},\delta_{\mathcal{V}})$. In particular $E_r^{p,q}(D^*) = E_r^{p,q}(\mathcal{U},\mathcal{V},\mathcal{F})$. It remains only to check that the morphisms $E_2^{p,q}(D^*) \to E_2^{p,q}(J^*)$ are given by the maps $a$ in (\ref{mapa}). Starting from $\Gamma(D^{p,q}) \to \Gamma(J^{p,q})$, the morphisms at the $E_1$-stage are obtained by taking cohomology in the $q$-direction. Since $J^{p,q}$ is a Cartan-Eilenberg resolution the cocycles, coboundaries and cohomology in the $q$-direction are all injective objects and from this one sees that $E_1^{p,q}(J^*)$ is equivalently obtained by taking $q$-cohomology of $J^{p,q}$ and then applying $\Gamma$. Starting from the morphisms $D^{p,q} \to J^{p,q}$, take $q$-cohomology. Let $D_1^{p,q},J_1^{p,q}$ be the resulting cohomology objects which for fixed $q$ are complexes in $p$. So there are morphisms $D_1^{p,q} \to J_1^{p,q}$ and applying $\Gamma$ we get maps $\Gamma(D_1^{p,q}) \to \Gamma(J_1^{p,q})$. We get a commutative diagram of the form
\begin{equation*}\xymatrix{
\Gamma( D_1^{p,q} ) \ar[r] & \Gamma( J_1^{p,q} ) \ar[d]^{\simeq} \\
E_1^{p,q}(D^*) \ar[r] \ar[u] & E_1^{p,q}(J^*) 
}
\end{equation*}
where the maps $E_1^{p,q}(D^*) \to \Gamma( D_1^{p,q})$ are as follows: we have
\begin{equation*}
E_1^{p,q}(D^*) = \prod_{|I| = p+1} (\tilde{R}^q f_* \mathcal{F})( U_I ),
\end{equation*}
where as before $\tilde{R}^q f_* \mathcal{F}$ is the presheaf on $W$ such that $U \mapsto H^q( \mathcal{V}|_{f^{-1}(U)} , \mathcal{F}|_{f^{-1}(U)})$. On the other hand one sees that for fixed $q$ the complex $D_1^{p,q}$ is the \v{C}ech resolution of the sheaf $S\tilde{R}^q f_* \mathcal{F}$ associated to $\tilde{R}^q f_* \mathcal{F}$ using the open cover $\mathcal{U}$ on $W$. The natural map $\tilde{R}^q f_* \mathcal{F} \to S\tilde{R}^q f_* \mathcal{F}$ then induces the maps $E_1^{p,q}(D^*) \to \Gamma( D_1^{p,q})$. Next we observe that the maps $D_1^{p,q} \to J_1^{p,q}$ for fixed $q$ are morphisms of complexes in $p$. We have already observed that for fixed $q$, $D_1^{p,q}$ is the \v{C}ech resolution of $S\tilde{R}^q f_* \mathcal{F}$ with respect to the cover $\mathcal{U}$. On the other hand using the definition of Cartan-Eilenberg resolutions we have for fixed $q$ that $J_1^{p,q}$ is an injective resolution of $R^q f_* \mathcal{F}$. Thus for each fixed $q$, $\Gamma(D^{p,q})$ is the \v{C}ech complex for the sheaf $S\tilde{R}^q f_* \mathcal{F}$, while $\Gamma(J_1^{p,q})$ is a complex such that the cohomology is $H^p(W , R^q f_* \mathcal{F})$. From this obtain easily that the maps $E_2^{p,q}(D^*) \to E_2^{p,q}(J^*)$ are the maps in (\ref{mapa}).
\end{proof}

Our goal is to use the complex $C^{p,q}(\mathcal{U},\mathcal{V},\mathcal{F})$ for suitable choice of covers $\mathcal{U},\mathcal{V}$ to compute the $d_2$ differential in the Leray spectral sequence in terms of explicit cocycle data. Thus suppose $x \in E_2^{p,q}(f,\mathcal{F}) = H^p( W , R^q f_* \mathcal{F})$. We can find a cover $\mathcal{U} = \{ U_i \}$ such that $x$ has a \v{C}ech representative $x = [\{ x_{i_0 i_1 \dots i_p} \}]$, where $x_{i_0 i_1 \dots i_p} \in (R^q f_*\mathcal{F})(U_I)$. Since $W$ is paracompact, there is an isomorphism between \v{C}ech cohomology of $R^q f_* \mathcal{F}$ and the \v{C}ech cohomology of the presheaf $U \mapsto H^q( f^{-1}(U) , \mathcal{F})$ \cite{spa}. As a result, for every $I = \{ i_0 , i_1 , \dots , i_p \}$ we can choose an open cover $\mathcal{Z}_I = \{ Z_r \}_{r \in R_I}$ of $f^{-1}(U_I)$ such that $x_{i_0 i_1 , \dots i_p}$ has a \v{C}ech representative $[ \{ x_{i_0 i_1 , \dots i_p , r_0 , r_1 , \dots , r_q} \}]$, where $x_{i_0 i_1 , \dots i_p , r_0 , r_1 , \dots , r_q} \in \mathcal{F}( Z_{r_0 r_1 \dots r_q})$. For any $z \in Z$ we have that $z \in \pi^{-1}(U_I)$ for only finitely many $I$. Therefore we may choose an open neighborhood $z \in L_z$ such that for each $I$ with $z \in \pi^{-1}(U_I)$ there exists an $r \in R_I$ such that $L_z \subseteq Z_r$. Choosing such an $L_z$ for each $z \in Z$ we find an open cover $\mathcal{V}$ of $Z$ with the property that for all $I$, $\mathcal{V}|_{f^{-1}(U_I)}$ is a refinement of $\mathcal{Z}_I$. The point is that it now follows that for all $I$, $x_{i_0 i_1 \dots i_p}$ has a representative in $H^p( \mathcal{V}|_{f^{-1}(U_I)} , \mathcal{F}|_{f^{-1}(U_I)})$. We have thus shown that:
\begin{proposition}
Given any $x \in E_2^{p,q}(f,\mathcal{F}) = H^p(W, R^q f_* \mathcal{F})$ we can find a cover $\mathcal{U}$ of $W$ and cover $\mathcal{V}$ of $Z$ such that $x$ lies in the image of the morphism $a : E_2^{p,q}(\mathcal{U},\mathcal{V},\mathcal{F}) \to E_2^{p,q}(f,\mathcal{F})$ of Theorem \ref{mss}.
\end{proposition}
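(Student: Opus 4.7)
The plan is to produce, for the given class $x$, a pair of covers $(\mathcal{U},\mathcal{V})$ together with a bidegree $(p,q)$ cocycle in $C^{p,q}(\mathcal{U},\mathcal{V},\mathcal{F})$ whose image under $a$ is $x$. The construction proceeds in three stages: first lift $x$ to a Čech cocycle on $W$, then locally lift each cocycle entry to a Čech cocycle on the fibre preimages, and finally splice all these fibre covers into a single cover $\mathcal{V}$ of $Z$.

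First I would choose an open cover $\mathcal{U}=\{U_i\}$ of $W$ fine enough that $x$ is represented by a Čech cocycle $\{x_{i_0\cdots i_p}\}$ with $x_{i_0\cdots i_p}\in (R^qf_*\mathcal{F})(U_I)$. This uses only that Čech cohomology computes sheaf cohomology on paracompact spaces. Next, for each multi-index $I=\{i_0,\ldots,i_p\}$, the section $x_{i_0\cdots i_p}$ lives in a sheaf associated to the presheaf $U\mapsto H^q(f^{-1}(U),\mathcal{F})$; since $f^{-1}(U_I)$ is paracompact (every subspace of $Z$ is assumed paracompact) the comparison theorem of Spanier gives an isomorphism between Čech and sheaf cohomology of $\mathcal{F}|_{f^{-1}(U_I)}$, so after shrinking $U_I$ slightly if needed we may choose an open cover $\mathcal{Z}_I$ of $f^{-1}(U_I)$ and a Čech representative $\{x_{i_0\cdots i_p,r_0\cdots r_q}\}$ of $x_{i_0\cdots i_p}$ with respect to $\mathcal{Z}_I$.

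The key step is the diagonalisation in the third stage: I need a single cover $\mathcal{V}$ of $Z$ that simultaneously refines $\mathcal{Z}_I|_{f^{-1}(U_I)}$ for \emph{every} multi-index $I$. For each $z\in Z$, the set of indices $i$ with $z\in f^{-1}(U_i)$ is finite (shrink $\mathcal{U}$ to a locally finite refinement first, using paracompactness of $W$), hence the set of multi-indices $I$ with $z\in f^{-1}(U_I)$ is finite as well. For each such $I$ pick an $r(I,z)\in R_I$ with $z\in Z_{r(I,z)}$ and set $L_z = \bigcap_I Z_{r(I,z)}$, a finite intersection of open sets containing $z$. The family $\mathcal{V}=\{L_z\}_{z\in Z}$ is then the desired cover, since by construction $\mathcal{V}|_{f^{-1}(U_I)}$ refines $\mathcal{Z}_I$ for every $I$.

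Once $\mathcal{V}$ is in hand, the cocycles $\{x_{i_0\cdots i_p,r_0\cdots r_q}\}$ pull back along the refinement to give a bidegree $(p,q)$ Čech cocycle in $C^{p,q}(\mathcal{U},\mathcal{V},\mathcal{F})$ whose class in $E_2^{p,q}(\mathcal{U},\mathcal{V},\mathcal{F}) = H^p(\mathcal{U},\tilde R^q f_*\mathcal{F})$ maps under $a$ to $x\in H^p(W,R^qf_*\mathcal{F})$ by Theorem \ref{mss}, completing the argument. The main obstacle is the diagonalisation in the third stage; the rest is bookkeeping. Replacing $\mathcal{U}$ by a locally finite refinement is essential, and without the hypothesis that every subspace of $Z$ is paracompact one could not invoke the Čech-to-sheaf isomorphism on the possibly non-open sets $f^{-1}(U_I)$ in stage two.
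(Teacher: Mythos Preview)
Your proof is correct and follows essentially the same three-stage approach as the paper: lift $x$ to a \v{C}ech cocycle on $W$, lift each entry to a \v{C}ech cocycle on $f^{-1}(U_I)$ with respect to some cover $\mathcal{Z}_I$, then diagonalise by setting $L_z$ to be a finite intersection of members of the $\mathcal{Z}_I$ over the finitely many $I$ containing $z$. You are in fact slightly more careful than the paper in explicitly passing to a locally finite refinement of $\mathcal{U}$ before the third stage, which the paper uses implicitly when asserting that each $z$ lies in $f^{-1}(U_I)$ for only finitely many $I$.
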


Now we specialize to the case relevant to Proposition \ref{obstruction}. Thus let $f : Z \to W$ a locally trivial torus bundle and $\mathcal{F} = \mathcal{C}_U$. Suppose $x \in E_2^{0,1}(f,\mathcal{F})$. Then we have just shown that we can find covers $\mathcal{U},\mathcal{V}$ of $W,Z$ such that $x$ has a representative $x' \in E_2^{0,1}(\mathcal{U},\mathcal{V},\mathcal{F})$. We use this to this to determine $d_2 x \in E_2^{2,0}(f,\mathcal{F})$ by first determining $d_2 x' \in E_2^{2,0}(\mathcal{U},\mathcal{V},\mathcal{F})$. To do this let $\tilde{x} \in C^{0,1}(\mathcal{U},\mathcal{V},\mathcal{F})$ be a representative for $x'$. So in particular $\delta_{\mathcal{V}} \tilde{x} = 0$ and $\delta_{\mathcal{U}} \tilde{x} + \delta_{\mathcal{V}} \tilde{y} = 0$ for some $\tilde{y} \in C^{1,0}(\mathcal{U},\mathcal{V},\mathcal{F})$. We then have that $\delta_{\mathcal{U}} \tilde{y}$ is a representative for $d_2 x'$. Explicitly we have ${\rm U}(1)$-valued functions $\tilde{x}_{i_0 , j_0 j_1}$, $\tilde{y}_{i_0 i_1,j_0}$ such that
\begin{eqnarray}
\tilde{x}_{i_0 , j_0 j_1} \tilde{x}_{i_0 , j_1 j_2} \tilde{x}_{i_0 , j_2 j_1} &=& 1, \label{lbc} \\
\tilde{x}_{i_1 , j_0 j_1}\tilde{x}^{-1}_{i_0 , j_0 j_1} &=& \tilde{y}_{i_0 i_1 , j_1} \tilde{y}^{-1}_{i_0 i_1 , j_0}. \label{isocond}
\end{eqnarray}
Equation (\ref{lbc}) says that for each $i_0$, $\{ \tilde{x}_{i_0 , j_0 j_1} \}$ are transition functions for a line bundle $L_i \to f^{-1}(W)$. Then Equation (\ref{isocond}) says that the $\{ \tilde{y}_{i_0 i_i , j_0} \}$ define line bundle isomorphisms $\phi_{i_0 i_1} : L_{i_1} \to L_{i_0}$ on the double intersections $f^{-1}(U_{i_0 i_1})$. We see immediately that $\phi_{i_0 i_1} \phi_{i_1 i_2} \phi_{i_2 i_0} = g_{i_0 i_1 i_2}$ is given by $\delta_{\mathcal{U}} \tilde{y}$, which represents $d_2 x'$. From this we have proven Proposition \ref{obstruction}.


\bibliographystyle{amsplain}

\end{document}